\def\N{\mathbb{N}}
\def\Z{\mathbb{Z}}
\def\C{\mathbb{C}}
\def\R{\mathbb{R}}
\def\K{\mathcal{K}}
\def\B{\mathcal{B}}
\def\xx{\sim}
\DeclareMathOperator{\diag}{diag}
\DeclareMathOperator{\Span}{span}
\DeclareMathOperator{\Cliff}{Cliff}
\title{Maximal and reduced Roe algebras of coarsely embeddable spaces}
\author{J\'{a}n \v{S}pakula\footnote{The first author was supported by the \emph{Deutsche
Forschungsgemeinschaft} (SFB 478 and SFB 878).} and Rufus Willett}
\theoremstyle{plain} \newtheorem{amen}{Theorem}[section]
\theoremstyle{plain} \newtheorem{atmen}[amen]{Theorem}
\theoremstyle{plain} \newtheorem{aprop}[amen]{Proposition}
\theoremstyle{plain} \newtheorem{main}[amen]{Theorem}
\theoremstyle{remark} \newtheorem{groupoids}[amen]{Remark}
\theoremstyle{remark} \newtheorem{tex}[amen]{Example}
\theoremstyle{remark} \newtheorem{cex}[amen]{Example}
\theoremstyle{remark} \newtheorem{freeex}[amen]{Example}
\theoremstyle{remark} \newtheorem{agsex}[amen]{Example}
\theoremstyle{remark} \newtheorem{serreex}[amen]{Example}
\theoremstyle{plain} \newtheorem{serthe}[amen]{Theorem}
\theoremstyle{definition} \newtheorem{bg}[amen]{Definition}
\theoremstyle{definition} \newtheorem{matrixnotation}[amen]{Notation}
\theoremstyle{definition} \newtheorem{algroe}[amen]{Definition}
\theoremstyle{definition} \newtheorem{roe}[amen]{Definition}
\theoremstyle{plain} \newtheorem{fundlem}[amen]{Lemma}
\theoremstyle{definition} \newtheorem{maxroe}[amen]{Definition}
\theoremstyle{definition} \newtheorem{reducednotation}[amen]{Notation}
\theoremstyle{definition} \newtheorem{propa}{Definition}[section]
\theoremstyle{plain} \newtheorem{finlem}[propa]{Lemma}
\theoremstyle{plain} \newtheorem{extendlem}[propa]{Corollary}
\theoremstyle{plain} \newtheorem{closlem}[propa]{Lemma}
\theoremstyle{definition} \newtheorem{cembed}{Definition}[section]
\theoremstyle{definition} \newtheorem{adef}[cembed]{Definition}
\theoremstyle{definition} \newtheorem{wdef}[cembed]{Definition}
\theoremstyle{definition} \newtheorem{supp0}[cembed]{Definition}
\theoremstyle{definition} \newtheorem{tra}[cembed]{Definition}
\theoremstyle{plain} \newtheorem{mer}[cembed]{Proposition}
\theoremstyle{definition} \newtheorem{nuclearity}[cembed]{Remark}
\theoremstyle{definition} \newtheorem{suppo}[cembed]{Definition}
\theoremstyle{plain} \newtheorem{invclo}[cembed]{Lemma}
\theoremstyle{plain} \newtheorem{disj}[cembed]{Lemma}
\theoremstyle{plain} \newtheorem{geomlem}[cembed]{Lemma}
\theoremstyle{plain} \newtheorem{pastelem}[cembed]{Lemma}
\theoremstyle{definition} \newtheorem{ucalgdef}{Definition}[section]
\theoremstyle{plain} \newtheorem{gradelem}[ucalgdef]{Lemma}
\theoremstyle{remark} \newtheorem{ucalgrem}[ucalgdef]{Remark}
\theoremstyle{plain} \newtheorem{ucalglem}[ucalgdef]{Lemma}
\theoremstyle{definition} \newtheorem{ashomo}[ucalgdef]{Claim}
\theoremstyle{definition} \newtheorem{completions}[ucalgdef]{Claim}
\theoremstyle{plain} \newtheorem{composition}[ucalgdef]{Theorem}
\theoremstyle{plain} \newtheorem{morita}[ucalgdef]{Proposition}
\theoremstyle{plain}\newtheorem{kmap}[ucalgdef]{Proposition}
\theoremstyle{plain} \newtheorem{wrapup}[ucalgdef]{Corollary}
\theoremstyle{definition} \newtheorem{asnot}{Definition}[section]
\theoremstyle{plain} \newtheorem{as-compose}[asnot]{Lemma}
\theoremstyle{remark} \newtheorem{acrem}[asnot]{Remark}
\begin{document}

\maketitle

\abstract{In \cite{Gong:2008ja}, Gong, Wang and Yu introduced a maximal, or universal, version of the Roe $C^*$-algebra associated to a metric space.  We study the relationship between this maximal Roe algebra and the usual version, in both the uniform and non-uniform cases.  The main result is that if a (uniformly discrete, bounded geometry) metric space $X$ coarsely embeds in a Hilbert space, then the canonical map between the maximal and usual (uniform) Roe algebras induces an isomorphism on $K$-theory.  We also give a simple proof that if $X$ has property A, then the maximal and usual (uniform) Roe algebras are the same.  These two results are natural coarse-geometric analogues of certain well-known implications of a-T-menability and amenability for group $C^*$-algebras.  The techniques used are $E$-theoretic, building on work of Higson-Kasparov-Trout \cite{Higson:1999be}, \cite{Higson:2001eb} and Yu \cite{Yu:200ve}. \\

\noindent
\emph{MSC:} primary 46L80.}

\section{Introduction}

Say $G$ is a second countable, locally compact group, and $C_{max}^*(G)$, $C^*_\lambda(G)$ are respectively its maximal and reduced group $C^*$-algebras.  One then has the following theorem of Hulanicki \cite{Hulanicki:1967lt}.

\begin{amen}\label{amen}
$G$ is amenable if and only if the canonical quotient map $\lambda:C_{max}^*(G)\to C_\lambda^*(G)$ is an isomorphism. \qed
\end{amen}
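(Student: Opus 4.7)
The plan is to prove the two implications separately, with the notion of weak containment of unitary representations, linked to amenability via almost invariant vectors in $L^2(G)$, serving as the common bridge.

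For the easier direction, suppose $\lambda$ is an isomorphism. The trivial one-dimensional representation $g \mapsto 1$ integrates to a character $\epsilon : C^*_{max}(G) \to \C$. Composing with $\lambda^{-1}$ produces a state on $C^*_\lambda(G)$ whose GNS representation is trivial; equivalently, the trivial representation is weakly contained in $\lambda$. By a standard vector-state approximation, this yields a net of unit vectors $(\xi_i) \subset L^2(G)$ with $\|\lambda(g)\xi_i - \xi_i\| \to 0$ uniformly on compact subsets of $G$. The existence of such almost invariant vectors is Reiter's classical characterisation of amenability.

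For the harder direction, assume $G$ is amenable and fix a unitary representation $\pi$ on $H_\pi$. I would show that $\|\pi(f)\| \le \|\lambda(f)\|$ for all $f \in C_c(G)$; taking the supremum over $\pi$ then gives $\|\cdot\|_{max} = \|\cdot\|_{\lambda}$ on $C_c(G)$, so $\lambda$ is isometric and hence an isomorphism (surjectivity is automatic). The key tool is Fell's absorption principle: the unitary $U$ on $L^2(G) \otimes H_\pi$ defined by $(U\eta)(g) = (1 \otimes \pi(g)^{-1})\eta(g)$ intertwines $\lambda \otimes \pi$ with $\lambda \otimes 1_{H_\pi}$. Given $\varepsilon > 0$ and a unit vector $\eta \in H_\pi$ almost realising $\|\pi(f)\|$, pick a Reiter vector $\xi \in L^2(G)$ that is invariant enough relative to $\supp f$ so that
\[
\bigl\|(\lambda \otimes \pi)(f)(\xi \otimes \eta) - \xi \otimes \pi(f)\eta\bigr\| < \varepsilon.
\]
Then $\|\pi(f)\eta\|$ approximates $\|(\lambda \otimes \pi)(f)(\xi \otimes \eta)\|$, which by Fell absorption equals $\|(\lambda(f) \otimes 1)U(\xi \otimes \eta)\| \le \|\lambda(f)\|$.

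The main obstacle is the clean execution of this norm comparison: one must choose the almost invariant $\xi$ with precision relative to the compact support of $f$ and track the error terms so they vanish in the limit. Once the inequality $\|\pi(f)\| \le \|\lambda(f)\|$ is secured for arbitrary $\pi$, density of $C_c(G)$ in $C^*_{max}(G)$ finishes the argument.
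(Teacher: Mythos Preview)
Your argument is correct and follows a standard route to Hulanicki's theorem: weak containment of the trivial representation in $\lambda$ for one direction, and Fell absorption combined with almost invariant vectors for the other. The error estimate you flag as the ``main obstacle'' is routine once $\xi$ is chosen sufficiently invariant over $\supp f$.

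There is nothing to compare against, however: the paper does not prove this statement. It is quoted as a classical result of Hulanicki (note the \texttt{\textbackslash qed} immediately after the statement and the citation to \cite{Hulanicki:1967lt}), serving only as motivation for the coarse-geometric analogues developed later. So your proof is not an alternative to the paper's---it simply supplies what the paper deliberately omits.
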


Much more recently, as part of their deep work on the Baum-Connes conjecture \cite{Higson:2001eb}, N. Higson and G. Kasparov proved the following theorem.

\begin{atmen}\label{atmen}
If $G$ is a-T-menable then the canonical quotient map $\lambda:C_{max}^*(G)\to C_\lambda^*(G)$ induces an isomorphism on $K$-theory. \qed
\end{atmen}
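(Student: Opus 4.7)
The plan is to prove this via the Dirac-dual-Dirac method of Higson-Kasparov, using the proper affine isometric action of $G$ on a real Hilbert space $H$ that a-T-menability provides.

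First I would attach to $H$ a graded $G$-$C^*$-algebra $\B$ of the type constructed in \cite{Higson:1999be}: roughly, $C_0$-functions on $H$ with Clifford algebra coefficients, realized as an inductive limit over finite-dimensional affine subspaces of $H$. The proper affine action of $G$ on $H$ induces a proper action on $\B$, in the strong sense that the canonical quotient $\B \rtimes_{max} G \to \B \rtimes_r G$ is an isomorphism.

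Next I would construct equivariant Dirac and Bott elements
\[
\alpha \in KK^G(\B, \C), \qquad \beta \in KK^G(\C, \B), \qquad \beta \otimes_\B \alpha = 1_\C \in KK^G(\C,\C).
\]
Taking maximal and reduced descents of $\alpha$ and $\beta$ then produces a commutative diagram of $K$-theory groups
\[
\xymatrix@C=1em{
K_*(C^*_{max}(G)) \ar[r] \ar[d]_{\lambda_*} & K_*(\B \rtimes_{max} G) \ar[r] \ar[d]^{\cong} & K_*(C^*_{max}(G)) \ar[d]^{\lambda_*} \\
K_*(C^*_r(G)) \ar[r] & K_*(\B \rtimes_r G) \ar[r] & K_*(C^*_r(G))
}
\]
whose horizontal compositions are both the identity (because $\beta \otimes_\B \alpha = 1_\C$) and whose central vertical arrow is an isomorphism (by properness of the $G$-action on $\B$). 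A short diagram chase then shows $\lambda_*$ is both injective and surjective: injectivity follows because if $\lambda_*(x)=0$ then the middle vertical isomorphism forces the top image $\beta \cdot x$ to vanish, and $\alpha$ then recovers $x=0$; surjectivity is the dual statement.

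The technical core, and the main obstacle, is the construction of the equivariant Bott element $\beta$ and the verification of $\beta \otimes_\B \alpha = 1_\C$ \emph{equivariantly}. This requires an infinite-dimensional, equivariant Bott periodicity theorem, implemented through a rotation homotopy on $H$; because the $G$-action on $H$ is only affine, not linear, one must be careful how the homotopy interacts with the chosen basepoint and with the Gaussian-weight estimates that make the inductive limit picture well-behaved. This is essentially the content of the work cited as \cite{Higson:2001eb}, and is precisely what the present paper will have to adapt to the non-equivariant, coarse-geometric setting.
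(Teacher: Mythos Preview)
Your sketch is essentially correct as an outline of the Higson--Kasparov argument, but note that the paper does not actually prove this statement: the \qed after the theorem signals that it is quoted as a known result from \cite{Higson:2001eb}, not proved here. So there is no ``paper's own proof'' to compare against for this particular theorem.

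That said, two remarks are worth making. First, a technical point: in \cite{Higson:2001eb} the Dirac and Bott elements are realised as equivariant asymptotic morphisms (i.e.\ in $E$-theory) rather than as honest $KK^G$-classes; the Bott element in particular is built from an asymptotic family, and the identity $\beta\otimes_\B\alpha=1_\C$ is established via a homotopy of asymptotic morphisms. Writing $\alpha\in KK^G(\B,\C)$, $\beta\in KK^G(\C,\B)$ as you do is slightly anachronistic, though one can translate between the frameworks. Second, your outline is in fact a very accurate template for what the present paper does prove, namely Theorem~\ref{main}: the diagram chase you describe is exactly the one appearing in the proof of Theorem~\ref{main} (with the twisted uniform Roe algebra $C^*_{u,\xx}(X,\mathcal{A})$ playing the r\^{o}le of $\B\rtimes G$, Proposition~\ref{mer} supplying the middle vertical isomorphism, and the coarse Dirac and Bott morphisms $\alpha$, $\beta$ replacing the equivariant ones). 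So while there is nothing to compare for Theorem~\ref{atmen} itself, your proposal correctly anticipates the architecture of the paper's main argument.
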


The aim of this work is to prove partial analogues of these theorems in the setting of coarse geometry.  The part of amenability is here played by G. Yu's \emph{property A}, and that of a-T-menability by \emph{coarse embeddability in Hilbert space}  (definitions are given in the main body of the paper).   The part of the group $C^*$-algebras is played by the \emph{Roe algebras}, or alternatively the \emph{uniform Roe algebras}, of a metric space $X$.  

The Roe algebra of $X$, denoted $C^*(X)$, and uniform Roe algebra, denoted $C^*_u(X)$, were introduced by J. Roe in his work on index theory on open manifolds \cite{Roe:1988qy}, \cite{Roe:1993lq} and have played a fundamental r\^{o}le in the subsequent development of $C^*$-algebraic approaches to large-scale index theory and coarse geometry.  More recently, a maximal version of the Roe algebra, $C^*_{max}(X)$, was introduced in work of Gong, Wang and Yu on the Baum-Connes and coarse Baum-Connes conjectures \cite{Gong:2008ja}; a maximal version of the uniform Roe algebra, $C^*_{u,max}(X)$, can de defined analogously.  

Just as in the group case, then, there are canonical quotient maps
\begin{displaymath}
\lambda:C^*_{max}(X)\to C^*(X) \textrm{ and } \lambda:C^*_{u,max}(X)\to C^*_u(X).
\end{displaymath}

Our main results in this paper are the following analogues of Theorems \ref{amen} and \ref{atmen}.

\begin{aprop}\label{aprop}
Let $X$ be a bounded geometry, uniformly discrete metric space with property A.  Then the canonical quotients $\lambda:C^*_{u,max}(X)\to C^*_u(X)$ and $\lambda:C^*_{max}(X)\to{}C^*(X)$ are isomorphisms.
\end{aprop}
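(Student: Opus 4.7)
The plan is to use Property A to approximate the identity on the algebraic Roe algebra $\C_u[X]$ by finite-propagation completely positive maps whose images lie in subalgebras with a unique $C^*$-norm, forcing every $*$-representation to be controlled by the reduced norm on those images. The Schur-multiplier formulation of Property A provides positive-type kernels $k_n \colon X \times X \to \C$ of propagation at most $S_n$, with $k_n(x,x) = 1$ and $k_n \to 1$ uniformly on controlled sets. Decompose $k_n(x,y) = \sum_z \overline{\xi_n(x)(z)}\,\xi_n(y)(z)$ with $\supp \xi_n(x) \subset B(x, S_n)$, and fix $T \in \C_u[X]$ of propagation at most $R$.

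Using bounded geometry, partition $X = \bigsqcup_{j=1}^M X_j$ into finitely many $(2S_n + R)$-separated subsets; for each $j$ let $V_j \in \ell^\infty(X) \subset \C_u[X]$ be the multiplication operator $x \mapsto \xi_n(x)(z_j(x))$, where $z_j(x)$ is the (unique, by separation) element of $X_j \cap B(x, S_n)$ if it exists. A direct calculation --- in which the $(2S_n + R)$-separation kills all cross-terms --- yields the \emph{finite} identity
\[
M_{k_n}(T) \;=\; \sum_{j=1}^M V_j^* P_j T P_j V_j \qquad \text{in } \C_u[X],
\]
where $P_j$ denotes the characteristic function of $\bigcup_{z \in X_j} B(z, S_n)$, together with $\sum_j V_j^* V_j = 1$ in $\ell^\infty(X)$.

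The central claim is that $\|\pi(M_{k_n}(T))\| \le \|T\|_{C^*_u(X)}$ for every $*$-representation $\pi$ of $\C_u[X]$. Apply $\pi$ to the displayed identity and invoke the elementary Cauchy--Schwarz bound
\[
\Bigl\|\sum_j W_j^* S_j W_j\Bigr\| \;\le\; \sup_j \|S_j\| \qquad \text{whenever } \sum_j W_j^* W_j \le 1,
\]
with $W_j = \pi(V_j)$ and $S_j = \pi(P_j T P_j)$: it then suffices to bound $\|\pi(P_j T P_j)\|$ by $\|T\|_{C^*_u(X)}$. But $P_j T P_j$ is supported on $\bigsqcup_{z \in X_j} B(z, S_n)^2$ and therefore lies in a $*$-subalgebra $\B_j \subset \C_u[X]$ whose $C^*$-closure, by bounded geometry, is an $\ell^\infty$-direct sum of matrix algebras of uniformly bounded dimension --- hence nuclear, with a unique $C^*$-norm. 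Consequently $\|\pi(P_j T P_j)\| \le \|P_j T P_j\|_{C^*_u(X)} \le \|T\|_{C^*_u(X)}$, as required.

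To finish, a standard colouring argument uses bounded geometry to write $T - M_{k_n}(T)$ as a sum of at most $N(R)$ products $U_c \Delta_c$, with $U_c$ a partial isometry in $\C_u[X]$ and $\Delta_c \in \ell^\infty(X)$ of supremum norm at most $\epsilon_n \|T\|_{C^*_u(X)}$, where $\epsilon_n := \sup_{d(x,y) \le R} |1 - k_n(x,y)| \to 0$. As both partial isometries and $\ell^\infty(X)$ have unique $C^*$-norm, this gives $\|T - M_{k_n}(T)\|_{max} \to 0$; combined with the preceding paragraph, $\|T\|_{max} \le \|T\|_{C^*_u(X)}$, establishing $C^*_{u,max}(X) = C^*_u(X)$. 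The non-uniform version $C^*_{max}(X) = C^*(X)$ is structurally identical: the auxiliary fibre Hilbert space $H$ enters only as a harmless tensor factor, the block-diagonal subalgebras becoming $\ell^\infty$-direct sums of $M_n(\K(H))$, still nuclear. The main obstacle is that the obvious Stinespring-type bound on $\|\pi(M_{k_n}(T))\|$ yields only $\le \|\pi(T)\|$, the wrong direction; the point of the argument is to replace it by the combination of $\sum_j V_j^* V_j = 1$ (Cauchy--Schwarz, yielding a supremum over $j$ rather than a sum) together with the uniqueness of the $C^*$-norm on each $\B_j$.
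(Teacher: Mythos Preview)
Your argument is correct, and it shares its core ingredient with the paper's proof: both use Property~A to write the Schur multiplier as a \emph{finite} sum $M_{k_n}(T)=\sum_j\phi_j T\phi_j$ with $\phi_j\in\ell^\infty(X)$ (your $V_j$'s are exactly the paper's $\phi_i$'s, obtained by the same colouring). Where the two diverge is in how this identity is exploited. The paper simply observes that the finite sum makes $M_{k_n}$ u.c.p.\ on \emph{any} completion, then proves the key structural lemma that $\C_u^S[X]$ is closed in $C^*_{u,max}(X)$ (so the image of $M_{k_n}^{max}$ already lies in $\C_u[X]$), and concludes by noting that $\lambda$ is injective on $\C_u[X]$. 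You instead insert the idempotents $P_j$ to force $P_jTP_j$ into a block-diagonal $*$-subalgebra which is already a $C^*$-algebra (an $\ell^\infty$-product of uniformly-sized matrix blocks), where any $*$-representation is automatically contractive; Cauchy--Schwarz with $\sum_jV_j^*V_j=1$ then gives the quantitative bound $\|\pi(M_{k_n}(T))\|\le\|T\|_{C^*_u(X)}$ directly, and you finish by estimating $\|T-M_{k_n}(T)\|_{max}$ via the partial-translation decomposition. Your route is a bit more hands-on and yields an explicit norm inequality rather than an injectivity statement; the paper's route is slightly slicker in that it avoids the block-diagonal analysis by appealing to the closedness of $\C_u^S[X]$, which in turn rests on Lemma~\ref{fundlem}. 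Both are short and both transfer verbatim to the non-uniform case.
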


\begin{main}\label{main}
Let $X$ be a bounded geometry, uniformly discrete metric space that coarsely embeds in Hilbert space.  Then the canonical quotients $\lambda:C^*_{u,max}(X)\to{}C^*_u(X)$ and $\lambda:C^*_{max}(X)\to{}C^*(X)$ induce isomorphisms on $K$-theory.
\end{main}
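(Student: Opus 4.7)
The plan is to adapt the $E$-theoretic Bott--Dirac machinery of Higson--Kasparov--Trout to the coarse setting, in the spirit of Yu's proof of the coarse Baum--Connes conjecture for coarsely embeddable spaces. The proof will factor the map $\lambda_*$ on $K$-theory through an auxiliary $C^*$-algebra built from the coarse embedding into Hilbert space, and then exhibit a two-sided inverse on $K$-theory that always lands in the \emph{maximal} Roe algebra. Since the two statements (for $C^*$ and for $C^*_u$) are parallel, I will focus on the uniform case; the non-uniform case should be identical modulo notational decorations.

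First I would fix a coarse embedding $f:X\to H$ of $X$ into a separable real Hilbert space, and use it (together with the finite-dimensional affine subspaces of $H$) to introduce an auxiliary graded $C^*$-algebra $\mathfrak{A}(X)$: morally, Clifford-algebra-valued Schwartz functions on $H$ that are controlled by $X$ via $f$, in the sense of Higson--Kasparov--Trout's algebra $\mathcal{A}(H)$ twisted by the metric structure of $X$. This is the role of the ``universal $C^*$-algebra'' whose definition and properties will occupy Section~5 of the paper (the \texttt{ucalgdef} block). The two key features to extract are: (i) that $\mathfrak{A}(X)$ has computable $K$-theory via a $K$-theoretic Bott/Thom isomorphism (this is the content expected from \texttt{kmap} and \texttt{morita}), and (ii) that the natural maximal/reduced tension does not appear for $\mathfrak{A}(X)$ itself, because of its ``proper'' structure.

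Next I would construct a pair of asymptotic morphisms using $f$. The \emph{Bott-type} morphism $\beta:C^*_u(X)\to \mathfrak{A}(X)$ is assembled from the finite-dimensional Bott maps, evaluated at the $f$-images of points of $X$; since this proceeds by a direct limit over finite-dimensional subspaces, it extends automatically to the maximal completion, giving $\beta_{\max}:C^*_{u,\max}(X)\to\mathfrak{A}(X)$ compatible with $\lambda$. The \emph{Dirac-type} morphism $\alpha:\mathfrak{A}(X)\to C^*_{u,\max}(X)$ is the serious construction: one writes down a family of Dirac operators whose propagation grows to infinity as the asymptotic parameter goes to $\infty$, and the key point is that the resulting asymptotic morphism lands in the maximal, rather than reduced, Roe algebra, because controlling the max-norm of these Dirac-type operators only requires the representation-theoretic estimates afforded by the graded Clifford structure. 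The composition theorem (\texttt{composition}) will then assert that $\alpha\circ\beta$ is homotopic, as an $E$-theory element, to the canonical inclusion $C^*_u(X)\hookrightarrow C^*_{u,\max}(X)$ (followed by $\lambda$ in one case), by a standard rotation homotopy on $H\oplus H$.

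Putting this together, the composition $\alpha_*\circ\beta_{\max,*}:K_*(C^*_{u,\max}(X))\to K_*(C^*_{u,\max}(X))$ is the identity, and the composition $\alpha_*\circ\beta_*:K_*(C^*_u(X))\to K_*(C^*_u(X))$ factors through $\lambda_*$ and equals the identity as well; combining these two facts with the commutative diagram that $\beta$ intertwines $\lambda$, one deduces that $\lambda_*$ is both split injective and split surjective, hence an isomorphism. The main obstacle I anticipate is Step~3: making the Dirac asymptotic morphism land genuinely in $C^*_{u,\max}(X)$. In Yu's original argument the corresponding Dirac asymptotic morphism is only controlled in the reduced Roe algebra, and one needs to revisit the norm estimates on the finite-dimensional Dirac operators and show that the Clifford-algebra representation they define is universal enough that one can complete in the maximal, rather than reduced, norm. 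A secondary obstacle is arranging the direct-limit structure of $\mathfrak{A}(X)$ to ensure that both $\alpha$ and $\beta$ are well-defined honest (not merely asymptotic) $E$-theory classes between separable $C^*$-algebras, which is where the geometric lemmas in the \texttt{cembed} block (on supports, disjointness and pasting) will be needed.
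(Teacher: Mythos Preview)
Your outline has the right flavour but the architecture is off in two places that matter.

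First, there is no canonical inclusion $C^*_u(X)\hookrightarrow C^*_{u,\max}(X)$; the quotient $\lambda$ goes the other way, so the sentence ``$\alpha\circ\beta$ is homotopic to the canonical inclusion $C^*_u(X)\hookrightarrow C^*_{u,\max}(X)$'' is meaningless as stated, and the diagram chase in your last paragraph does not parse. In the paper the Dirac and Bott morphisms each come in \emph{two} parallel versions, $\alpha:C^*_{u,\sim}(X,\mathcal{A})\leadsto UC^{*,g}_\sim(X)$ and $\beta:\mathcal{S}\hat\otimes C^*_{u,\sim}(X)\leadsto C^*_{u,\sim}(X,\mathcal{A})$, one for $\sim=\max$ and one for $\sim=\text{red}$, all intertwined by $\lambda$. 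The decisive input is that at the \emph{twisted} level the map $\lambda:C^*_{u,\max}(X,\mathcal{A})\to C^*_u(X,\mathcal{A})$ is an honest $C^*$-isomorphism (the ``proper'' structure you allude to); this is the content of the pasting/disjointness lemmas, not a side issue. One then shows that the vertical composition $E_*\circ\phi_*\circ\alpha_*\circ\beta_*$ is the identity on $K_*(C^*_{u,\sim}(X))$ for each $\sim$ separately, and a five-row commutative ladder gives injectivity of $\lambda_*$ from the top square and surjectivity from the bottom rectangle. No map from reduced to maximal is ever constructed.

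Second, the Dirac morphism does not land in $C^*_{u,\max}(X)$ at all: applying the finite-dimensional Dirac operators entrywise produces compact operators on an infinite-dimensional Hilbert space, so the target is a ``uniform algebra'' $UC^{*,g}_\sim(X)$ whose matrix entries are compacts (of uniformly finite rank, up to approximation), not scalars. Getting back to $C^*_{u,\sim}(X)$ requires an additional (strong) Morita equivalence between $UC^*_\sim(X)$ and $C^*_{u,\sim}(X)$, which is nontrivial because these algebras are not $\sigma$-unital. Relatedly, the obstacle you flag as ``main'' --- making the Dirac morphism max-bounded --- is actually automatic by universality of the maximal norm once the entrywise formula is in hand; the genuine work lies in the max$=$red result for the twisted algebra, the Morita equivalence, and the computation that $\alpha_*\circ\beta_*$ agrees with an explicit $*$-homomorphism $\iota\circ(\eta\hat\otimes i_P)$.
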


Proposition \ref{aprop} is straightforward: in fact it is a special case of \cite[Corollary 5.6.17]{Brown:2008qy}, but it fits well into the philosophy of this piece, and we give a simple, direct proof below.  The proof we give of Theorem \ref{main} is substantially more involved, relying heavily on deep work of Yu \cite{Yu:200ve} and Higson-Kasparov-Trout \cite{Higson:1999be},\cite{Higson:2001eb}.  

Here is an outline of the paper.  Section \ref{defsec} introduces definitions and notation.  Section \ref{exsec} gives some examples of the sort of `wild' behaviour that can occur when the hypotheses of Proposition \ref{aprop} and Theorem \ref{main} fail, as well as asking some questions.  We give our proof of Proposition \ref{aprop} in Section \ref{asec}.  Unfortunately, we do not know whether the converse is true (it is in fact a special case of a general open problem for groupoid $C^*$-algebras - see for example Remark 6.1.9 in \cite{Anantharaman-Delaroche:2000mw}).  Section \ref{twist} introduces a variant of the twisted Roe algebra of Yu \cite[Section 5]{Yu:200ve}, and proves that the maximal and reduced versions of this algebra are always isomorphic (this is an analogue of the fact that the maximal and reduced crossed product $C^*$-algebras associated to a \emph{proper} action are isomorphic).  Section \ref{mainproof} then puts all this together with variants of Yu's coarse Dirac and Bott asymptotic morphisms \cite[Section 7]{Yu:200ve} to prove Theorem \ref{main}. The main idea of the proof is to use the coarse Dirac and Bott morphisms to `replace' the $K$-theories of  $C^*_{u}(X)$ and $C^*_{u,max}(X)$ by the $K$-theories of their twisted versions; but those are isomorphic even on the $C^*$-level by Section \ref{twist}. One ingredient of the proof which seems of independent interest is a (strong) Morita equivalence between the (maximal) uniform algebra of a space $X$ and the associated (maximal) uniform Roe algebra: both the uniform algebra, and uniform Roe algebra, of $X$ are completions of a certain algebra of $X$-by-$X$ matrices $(T(x,y))_{x,y\in X}$; the difference between the two is that for the uniform algebra, the entries $T(x,y)$ are allowed to be compact operators of uniformly finite rank, while for the uniform Roe algebra, the entries are complex scalars (see Defintions \ref{algroe}, \ref{roe} and \ref{ucalgdef} below).

\begin{groupoids}\label{groupoids}
Let us for a moment assume that the groupoid-equivariant $KK$-theory machinery works for non-$\sigma$-unital, non-separable $C^*$-algebras and groupoids with non-second countable base space. Then one can derive the main results of this paper by using known results as follows.  To every metric (or more generally, coarse) space $X$, one can associate the so-called coarse groupoid $\mathcal{G}(X)$ \cite{Skandalis:2002ng}, \cite[Chapter 10]{Roe:2003rw}. Furthermore, the reduced and maximal groupoid $C^*$-algebras of $\mathcal{G}(X)$ are precisely the uniform Roe algebra and the maximal uniform Roe algebra of $X$. Next, it is shown in \cite{Skandalis:2002ng} that property A of $X$ corresponds to (topological) amenability of $\mathcal{G}(X)$ and in \cite{Tu:1999qm} that coarse embeddability of $X$ into a Hilbert space corresponds to the Haagerup property for $\mathcal{G}(X)$.  Consequently, Theorem \ref{main} follows from the deep result of Tu \cite{Tu:1999bq} (generalising the work of Higson and Kasparov \cite{Higson:2001eb}) that groupoids with the Haagerup property are $K$-amenable, whence the quotient map from their maximal groupoid $C^*$-algebra to their reduced $C^*$-algebra is a $KK$-equivalence.

However, there are serious technical problems with the outline above, vis: $KK$-products  do not necessarily exist if the algebras involved are not $\sigma$-unital; even if the products exist, they are a priori not unique; if products $a\circ(b\circ c)$ and $(a\circ b)\circ c$ exist for $KK$-elements defined using non-$\sigma$-unital algebras, they are not a priori equal.  We thus thought a direct proof of Theorem \ref{main} was merited, as although most experts would expect the Theorem to be true for by analogy with the separable case, it does not appear to follow from anything in the literature.  Indeed, we hope our work is of some interest insofar as it suggests methods that can be employed when dealing with non-separable algebras in this sort of context.

At the moment, it remains open whether our proof can be improved to obtain that the canonical quotient map $\lambda$ from Theorem \ref{main} induces an equivalence in $E$-theory. There are two missing pieces: computing the composition $\beta_*\circ\alpha_*$ (see Theorem \ref{composition}), and perhaps showing that strong Morita equivalence implies $E$-equivalence in full generality (all proofs in the literature use at least $\sigma$-unitality). One consequence of this would be `$E$-nuclearity' of the uniform Roe algebra of a coarsely embeddable space (see Remark \ref{nuclearity}).
\end{groupoids}

\subsection*{Acknowledgments}
This work was started during the focused semester on $KK$-theory and its applications, held at the West\-f\"{a}\-li\-sche Wilhelms-Universit\"{a}t, M\"{u}nster, in summer 2009.  The authors would like to thank the organisers (notably Siegfried Echterhoff, Walther Paravicini and Pierre Clare) for a very inspiring and valuable semester. The authors would further like to thank Guoliang Yu for encouragement to pursue this research and Erik Guentner for helpful discussions about $E$-theory.

\subsection{Definitions and notation}\label{defsec}

We work in the context of metric spaces, rather than general coarse structures.  The reader can easily check that the proof of Proposition \ref{aprop} goes through for a general bounded geometry coarse space; however, the hypotheses for our main result Theorem \ref{main} force a coarse space satisfying them to be metrisable.  Thus we are not really losing any generality by restricting to the case of metric spaces.

\begin{bg}
Let $(X,d)$ be a metric space, and for any $r>0$, $x\in X$, let $B_r(x):=\{y\in X~|~d(x,y)<r\}$ denote the open ball of radius $r$ about $x$.  $X$ is said to be \emph{uniformly discrete} if there exists $\delta>0$ such that for all $x,y\in X$, if $x\neq{}y$ then $d(x,y)\geq\delta$.  $X$ is said to be of \emph{bounded geometry} if for all $r>0$ there exists $N_r\in\N$ such that for all $x\in X$, $|B_r(x)|\leq{}N_r$.  An \emph{entourage} for $X$ is a set of the form
\begin{equation}\label{entourage}
\{(x,y)\in X\times X~|~d(x,y)\leq S\}
\end{equation} 
for some $S\in \R_+$, or a subset of such a set.
\end{bg}

\emph{For the remainder of this note, $X$ will denote a uniformly discrete, bounded geometry metric space}.  Interesting examples include finitely generated discrete groups equipped with word metrics, uniformly discrete subsets of Riemannian manifolds that have bounded curvature and positive injectivity radius, and the box spaces from Section \ref{exsec}.

\begin{matrixnotation}\label{matrixnotation}
Throughout this note, we will deal with $X$-by-$X$ indexed matrices (with entries in one of several $*$-algebras).  If $T$ is such a matrix, for consistency with the notation from \cite{Yu:200ve}, we write $T(x,y)$ for the $(x,y)^\textrm{th}$ entry.
\end{matrixnotation}

\begin{algroe}\label{algroe}
Let $T=(T(x,y))$ be an $X$-by-$X$ matrix, where each $T(x,y)$ is an entry in some algebra.  $T$ is said to be of \emph{finite propagation} if there exists $S>0$ such that $T(x,y)=0$ whenever $d(x,y)\geq S$ (i.e.\ the only non-zero matrix coefficients of $T$ occur in an entourage as in line (\ref{entourage}) above). 

The \emph{algebraic uniform Roe algebra of $X$}, denoted $\C_u[X]$, is the set of $X$-by-$X$ complex matrices of finite propagation with uniformly bounded entries.  It is a $*$-algebra when equipped with the usual matrix operations (note that multiplication makes sense, as only finitely many elements in each `row' and `column' can be non-zero).  For fixed $S>0$, we denote by $\C^S_u[X]$ the subspace consisting of those $T$ such that $T(x,y)=0$ for all $x,y$ with $d(x,y)>S$.

Fix now a separable infinite dimensional Hilbert space $\mathcal{H}$ and let $\mathcal{K}:=\mathcal{K}(\mathcal{H})$ denote the compact operators on $\mathcal{H}$.  The \emph{algebraic Roe algebra of $X$}, denoted $\C[X,\mathcal{K}]$, is the set of $X$-by-$X$ matrices $T$ of finite propagation and with uniformly bounded entries from $\mathcal{K}(\mathcal{H})$.  $\C[X,\mathcal{K}]$ is equipped with a $*$-algebra structure using the usual matrix operations and the $*$-algebra structure on $\mathcal{K}(\mathcal{H})$.  $\C^S[X,\mathcal{K}]$ is defined analogously to the uniform case.
\end{algroe}

Note that $\C_u[X]$ admits a natural $*$-representation by `matrix multiplication' on $\ell^2(X)$, and similarly $\C[X,\mathcal{K}]$ admits a natural $*$-representation on $\ell^2(X,\mathcal{H})$.

\begin{roe}\label{roe}
The \emph{uniform Roe algebra of $X$}, denoted $C^*_u(X)$, is the completion of $\C_u[X]$ for its natural representation on $\ell^2(X)$.  The \emph{Roe algebra of $X$}, denoted $C^*(X)$, is the completion of $\C[X,\mathcal{K}]$ for its natural representation on $\ell^2(X,\mathcal{H})$.
\end{roe}

The following fundamental lemma is essentially proved in \cite[Section 3]{Gong:2008ja}; see also \cite[Lemma 4.27]{Roe:2003rw} for a similar idea.  It will be used several times below, and is moreover needed to show that the maximal (uniform) Roe algebra is well-defined.

\begin{fundlem}\label{fundlem}
For all $S>0$ there exists a constant $C_S$ such that for all $T\in\C_u^S[X]$ (respectively, $T\in\C^S[X,\mathcal{K}]$) and any $*$-representation $\pi:\C_u^S[X]\to\mathcal{B}(\mathcal{H})$ (resp. $\pi:\C_u^S[X,\mathcal{K}]\to\mathcal{B}(\mathcal{H})$) one has that 
\begin{displaymath}
\|\pi(T)\|_{\mathcal{B}(\mathcal{H})}\leq C_S\sup_{x,y\in{}X}\|T(x,y)\|. 
\end{displaymath}
\end{fundlem}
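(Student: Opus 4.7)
My plan is to reduce to the case where $T$ has at most one nonzero entry per row and per column, and then exploit a simple algebraic identity to bound $\|\pi(T)\|$. The bounded geometry hypothesis gives $N_S\in\N$ with $|B_S(x)|\leq N_S$ for all $x$. Viewing the entourage $E_S:=\{(x,y):d(x,y)\leq S\}$ as the edge set of a bipartite graph on two copies of $X$, the maximum degree is $N_S$, so K\"{o}nig's edge-coloring theorem yields a partition $E_S=\bigsqcup_{i=1}^{N_S}\mathrm{graph}(\sigma_i)$ into graphs of partial bijections $\sigma_i$ of $X$. Decomposing $T=T_1+\cdots+T_{N_S}$ with each $T_i$ supported on $\mathrm{graph}(\sigma_i)$, and writing $a:=\sup_{x,y}\|T(x,y)\|$, it then suffices to show $\|\pi(T_i)\|\leq a$ for each $i$ and set $C_S:=N_S$.

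For such a $T_i$, the product $T_i^*T_i$ is diagonal: its $(y,y)$-entry is $T_i(\sigma_i\iv(y),y)^*T_i(\sigma_i\iv(y),y)$, positive of norm at most $a^2$ (and zero if $y\notin\image\sigma_i$). In the scalar case the identity matrix $I$ and the diagonal matrix $S$ with entries $\sqrt{a^2-|T_i(\sigma_i\iv(y),y)|^2}$ both lie in $\C_u^0[X]\subseteq\C_u[X]$, and the algebraic identity $S^*S=a^2I-T_i^*T_i$ holds in $\C_u[X]$. Applying $\pi$ gives $\pi(T_i)^*\pi(T_i)\leq a^2\pi(I)$, and since $\pi(I)$ is a self-adjoint idempotent we have $\pi(I)\leq 1_{\mathcal{H}_\pi}$; hence $\|\pi(T_i)\|\leq a$, as desired.

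The main obstacle is the compact-valued case, since $\C[X,\mathcal{K}]$ has no unit (the diagonal matrix with entries $1_\mathcal{H}$ is not in the algebra). To get around this I would apply continuous functional calculus inside $\mathcal{K}$ itself: for each $y$, let $D_y:=T_i(\sigma_i\iv(y),y)^*T_i(\sigma_i\iv(y),y)\in\mathcal{K}$ (or $0$), and set $F_y:=f(D_y)$ where $f(t):=a-\sqrt{a^2-t}$ on $[0,a^2]$. Since $f(0)=0$, the operator $F_y$ is compact, and $2aF_y-F_y^2=D_y$ by construction. Writing $F\in\C[X,\mathcal{K}]$ for the diagonal matrix with entries $F_y$, this gives the identity $T_i^*T_i=2aF-F^2$ in $\C[X,\mathcal{K}]$. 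For any $*$-representation $\pi$ the self-adjoint element $\pi(F)\in\B(\mathcal{H}_\pi)$ then satisfies
\[
a^2\cdot 1_{\mathcal{H}_\pi}-\pi(T_i^*T_i)=a^2\cdot 1_{\mathcal{H}_\pi}-2a\pi(F)+\pi(F)^2=\bigl(a\cdot 1_{\mathcal{H}_\pi}-\pi(F)\bigr)^2\geq 0,
\]
so $\|\pi(T_i)\|^2=\|\pi(T_i^*T_i)\|\leq a^2$, completing the bound with the same constant $C_S=N_S$.
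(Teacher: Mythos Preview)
Your proof is correct and follows the same core strategy as the paper: use bounded geometry to decompose the entourage $\{d(x,y)\leq S\}$ into finitely many graphs of partial bijections, split $T$ accordingly, and bound each piece by $a=\sup_{x,y}\|T(x,y)\|$. The paper phrases this as writing $T=\sum_{i=1}^{C_S} f_i v_i$ with $v_i$ partial isometries and $f_i\in\ell^\infty(X)$ (resp.\ $\ell^\infty(X,\mathcal{K})$), then bounding $\|\pi(f_i v_i)\|\leq\|\pi(f_i)\|\cdot\|\pi(v_i)\|\leq a\cdot 1$ using that $v_i$ is a partial isometry and $f_i$ is normal (resp.\ that $f_i^*f_i$ is normal).

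Where you genuinely differ is in how you bound each piece $T_i$: rather than factoring $T_i=f_iv_i$, you work with $T_i^*T_i$ directly and exhibit explicit algebraic witnesses of the inequality $T_i^*T_i\leq a^2$. In the $\mathcal{K}$-valued case this pays off: the paper's sketch has to invoke that the $v_i$ are only \emph{multipliers} of $\C[X,\mathcal{K}]$, which tacitly requires extending $\pi$ to the multiplier algebra (fine for nondegenerate representations, but an extra step). Your functional-calculus trick with $f(t)=a-\sqrt{a^2-t}$ keeps everything inside $\C[X,\mathcal{K}]$ itself and avoids that detour entirely. Both arguments yield the same constant $C_S$.
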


\begin{proof}
As we will need slight variants of this lemma several times, for the reader's benefit we sketch a proof.  The essential point is that for any $S>0$ there exist $C_S$ partial isometries $v_1,...,v_{C_S}$ in $\C^S_u[X]$ such that any $T\in \C_u^S[X]$ can be written (uniquely) as 
\begin{displaymath}
T=\sum_{i=1}^{C_S}f_iv_i,
\end{displaymath}
where each $f_i$ is an element of $l^\infty(X)$.  The proof for $\C^S[X,\mathcal{K}]$ is similar, but one replaces `$l^\infty(X)$' with `$l^\infty(X,\mathcal{K})$', and notes that partial isometries as $v_1,...,v_{C_S}$ as above are only multipliers of $\C[X,\mathcal{K}]$. 
\end{proof}

\begin{maxroe}\label{maxroe}
The \emph{maximal uniform Roe algebra of $X$}, denoted $C^*_{u,max}(X)$, is the completion of $\C_u[X]$ for the norm
\begin{displaymath}
\|T\|=\sup\{\|\pi(T)\|_{\mathcal{B}(\mathcal{H})}~|~\pi:\C_u[X]\to\mathcal{B}(\mathcal{H}) \textrm{ a $*$-representation}\}.
\end{displaymath}
One defines the \emph{maximal Roe algebra}, denoted $C^*_{max}(X)$, analogously.
\end{maxroe}

\begin{reducednotation}\label{reducednotation}
We will sometimes call $C^*(X)$ ($C^*_u(X)$) the \emph{reduced} (uniform) Roe algebra when we want to emphasise that we are not talking about the maximal case.

We will also often write `$C^*_{u,\xx}(X)$' in a statement if it applies to both $C^*_u(X)$ and $C^*_{u,max}(X)$ to avoid repeating it, and similarly for the other variants of the Roe algebra that we introduce throughout the piece.  This employs the convention that `$\xx$' means the same thing when appearing twice in a clause; for example,
\begin{displaymath}
\phi:C^*_{\xx}(X)\to C^*_{u,\xx}(X)
\end{displaymath}
means that $\phi$ can refer to either a map from $C^*(X)$ to $C^*_{u}(X)$, or to a map from $C^*_{max}(X)$ to $C^*_{u,max}(X)$, but not to a map that mixes the maximal and reduced versions.
\end{reducednotation}

From Section \ref{asec} onwards, we will work exclusively with the uniform algebras $C^*_{u,\xx}(X)$ (and some variants that we need for proofs).  The proof of Proposition \ref{aprop} for the non-uniform algebras $C^*_\xx(X)$ is precisely analogous.  The proof of Theorem \ref{main} for $C^*_\xx(X)$ is similar to, but significantly simpler than, that for the uniform case $C^*_{u,\xx}(X)$, and also closer to the material in \cite{Yu:200ve}.

\subsection{Examples and Questions}\label{exsec}

We conclude the introduction with some examples that help motivate the main results.  All of the examples we give are so-called \emph{Box spaces} associated to a discrete group $\Gamma$; we sketch the definition in the next paragraph.

Recall first that if $\Gamma$ is a finitely generated discrete group, and $(\Gamma_k)_{k\in\N}$ a nested (i.e.\ $\Gamma_{k+1}\leq \Gamma_k$) sequence of finite index subgroups of $\Gamma$ such that $\cap_k\Gamma_k=\{e\}$, then one can build an associated metric space out of the disjoint union of the (finite) spaces $\Gamma/\Gamma_k$ called the \emph{box space} of the pair $(\Gamma,(\Gamma_k))$ and denoted $X(\Gamma,(\Gamma_k))$.  See for example \cite[start of Section 2.2]{Oyono-Oyono:2009ua}.

Examples \ref{tex} and \ref{cex} discuss cases where the conclusions (and the hypotheses!) of Proposition \ref{aprop} and Theorem \ref{main} fail, in both the uniform and non-uniform cases; the existence of such is not obvious.  Examples \ref{cex}, \ref{agsex} discuss interesting borderline cases that we do not currently know how to deal with.  Example \ref{serreex} discusses an intriguing connection with, and potential application to, a long-standing open problem in number theory.

\begin{tex}\label{tex}
Say $\Gamma=SL(2,\Z)$, and that for each $k\in\N$, $\Gamma_k$ is the kernel of the natural map $SL(2,\Z)\to SL(2,\Z/2^k\Z)$.  Then $X:=X(\Gamma,(\Gamma_k))$ is an expander \cite[Example 4.3.3, D]{Lubotzky:1994tw}; as is well known (e.g. \ \cite[Proposition 11.26]{Roe:2003rw}), $X$ does not coarsely embed into Hilbert space (and thus also does not have property A).  Now, there is a natural inclusion $\C[\Gamma]\to \C_u[X]$ of the group algebra into the algebraic uniform Roe algebra, whence a commutative diagram
\begin{equation}\label{gin}
\xymatrix{ C^*_{max}(\Gamma) \ar[d]^= \ar[r] & C^*_{u,max}(X) \ar[d]^\lambda \\ C^*_{max}(\Gamma) \ar[r] & C^*_u(X) }
\end{equation}
(the left-hand-sides are the maximal group $C^*$-algebra, \emph{not} the maximal Roe algebra of $\Gamma$, which we would denote $C_{max}^*(|\Gamma|)$).
It is not hard to check that the top horizontal map is an injection.  Note, however, that $SL(2,\Z)$ has property $(\tau)$ with respect to the family of congruence subgroups, but not property (T), whence representations factoring through congruence subgroups are not dense in the unitary dual $\hat{\Gamma}$; hence the bottom horizontal map is not an injection.  We must therefore have that $C^*_{u,max}(X)\neq C^*_u(X)$; essentially the same argument applies to show that $C^*_{max}(X)\neq C^*(X)$.
\end{tex}

\begin{cex}\label{cex}
Let $\Gamma$, $(\Gamma_k)$, $X$ be as in the previous example.  In \cite[Example 4.20]{Oyono-Oyono:2009ua}, H. Oyono-Oyono and G. Yu point out that for this space, the maximal coarse assembly map $\mu_{X,max,*}$ is an isomorphism, while the (`usual') coarse assembly map $\mu_{X,*}$ is not surjective.  As, however, there is a commutative diagram
\begin{displaymath}
\xymatrix{ \lim_rK_*(P_r(X)) \ar[rr]^{\mu_{X,max,*}} \ar[drr]^{\mu_{X,*}} & & K_*(C^*_{max}(X)) \ar[d]^{\lambda_*} \\ & & K_*(C^*(X))},
\end{displaymath}
$\lambda_*$ cannot be surjective either.  We expect that a similar phenomenon occurs in the uniform case, but currently we have no proof of this fact.
\end{cex}

\begin{freeex}\label{freeex}
Say again that $\Gamma=SL(2,\Z)$, and for each $k\in\N$, let $\Gamma_k$ be the intersection over the kernels of all homomorphisms from $\Gamma$ to a group of cardinality at most $k$, so each $\Gamma_k$ is a finite index normal subgroup of $\Gamma$.  Using that $SL(2,\Z)$ has property (FD) \cite[Section 2]{Lubotzky:2004xw}, but not property (T), the sequence of quotients forming $X:=X(\Gamma,(\Gamma_k))$ is \emph{not} an expander.  As $\Gamma$ is not amenable, $X$ also does not have property A \cite[Proposition 11.39]{Roe:2003rw}.  We do not know the answers to the following questions:
\begin{enumerate}
\item Does $X$ coarsely embed in Hilbert space?
\item Is the map $\lambda$ from the maximal (uniform) Roe algebra of $X$ to the (uniform) Roe algebra of $X$ an isomorphism?
\item Is the coarse Baum-Connes conjecture true for $X$?
\item Does the map $\lambda$ from the maximal Roe algebra of $X$ to the Roe algebra of $X$ induce an isomorphism on $K$-theory? 
\end{enumerate}
Note that in the diagram in line (\ref{gin}) above, both horizontal maps are injections for this example, so this obstruction to question (2) no longer exists.  A positive answer to (1) or (2) implies positive answers to (3) and (4); moreover, (3) and (4) are equivalent by results of H. Oyono-Oyono and G. Yu \cite{Oyono-Oyono:2009ua}.  Any answers at all (positive or negative) would provide interesting new examples in coarse geometry, and some would be of broader interest.  
\end{freeex}

\begin{agsex}\label{agsex}
In recent work of G. Arzhantseva, E. Guentner and the first author, an example of a space $X(\Gamma,(\Gamma_k))$ which does not have property A, yet does coarsely embed in Hilbert space is constructed (here $\Gamma$ is a free group).  It would be interesting to know the answer to question (2) above (the other answers are all `yes') for this space.
\end{agsex}

\begin{serreex}\label{serreex}
Recall a conjecture of Serre \cite{Serre:1970df} that arithmetic lattices in $SO(n,1)$ do not have the congruence subgroup property.

\begin{serthe}\label{serthe}
Let $\Gamma$ be an arithmetic lattice in $SO(n,1)$.  If there exists a nested family $(\Gamma_k)$ of normal subgroups of $\Gamma$ with trivial intersection, and if the answer to any of the questions (1) through (4) from Example \ref{freeex} is `yes' for the space $X:=X(\Gamma,(\Gamma_k))$, then $\Gamma$ satisfies Serre's conjecture.
\end{serthe}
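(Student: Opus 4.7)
The plan is to prove the contrapositive: assuming $\Gamma$ fails Serre's conjecture, i.e.\ that $\Gamma$ enjoys the congruence subgroup property, I show that for every nested normal family $(\Gamma_k)$ of finite-index subgroups with $\bigcap_k \Gamma_k=\{e\}$, all four questions (1)--(4) in Example \ref{freeex} have the negative answer for $X := X(\Gamma,(\Gamma_k))$. Since (1) and (2) both imply (4), and (3) is equivalent to (4) (by the results of Oyono-Oyono and Yu cited at the end of Example \ref{freeex}), it is enough to refute (4); the sketch below will in fact refute each individually, for transparency.

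The first step is to upgrade $(\Gamma_k)$ to a property-$(\tau)$ family. By the congruence subgroup property, each $\Gamma_k$ contains a principal congruence subgroup $\Gamma(N_k)$; since every unitary representation of $\Gamma$ factoring through $\Gamma/\Gamma_k$ automatically factors through $\Gamma/\Gamma(N_k)$, any property $(\tau)$-type statement about the congruence family transfers verbatim to $(\Gamma_k)$. The crucial input is then the Selberg $3/16$ theorem (for $n=2$) together with work of Burger--Sarnak and Clozel (for $n\geq 3$): arithmetic lattices in $SO(n,1)$ have property $(\tau)$ with respect to their principal congruence subgroups. Via the standard Margulis dictionary, this implies that the Cayley/Schreier graphs of $\Gamma/\Gamma_k$ with respect to a fixed finite generating set form a bounded-degree expander sequence, so $X$ is an expander.

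Three parallel consequences now refute the four questions. For (1), expanders do not coarsely embed in Hilbert space (see e.g.\ \cite[Proposition 11.26]{Roe:2003rw}). For (2), note that $SO(n,1)$ is real rank one, hence a-T-menable, so $\Gamma$ does not have property (T); as $\Gamma$ has property $(\tau)$ with respect to $(\Gamma_k)$ but not (T), representations of $\Gamma$ factoring through the quotients $\Gamma/\Gamma_k$ are not dense in $\hat{\Gamma}$, and the argument of Example \ref{tex} applies verbatim to show $\lambda : C^*_{u,max}(X) \to C^*_u(X)$ is not even injective. For (3) and (4), I would apply the result of Oyono-Oyono and Yu cited in Example \ref{cex}: as in their setting, our $X$ is an expander arising from a property-$(\tau)$-but-not-(T) tower, so the maximal coarse assembly map remains an isomorphism while the usual one fails to be surjective; the commutative triangle in Example \ref{cex} then forces $\lambda_*$ on $K$-theory to be non-surjective, giving the negative answer to (4), and hence to (3).

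The main obstacle I anticipate is the last step: one must verify that the Oyono-Oyono--Yu argument, presented in Example \ref{cex} only for the specific $SL(2,\Z)$-congruence tower with $\Gamma_k=\ker(SL(2,\Z)\to SL(2,\Z/2^k\Z))$, carries over to the general arithmetic-hyperbolic situation. This should reduce to checking that the relevant proofs in \cite{Oyono-Oyono:2009ua} only use (i) that the family of finite quotients is an expander, and (ii) that the ambient group is a-T-menable (or at least coarsely embeddable in Hilbert space at the level of $P_r(X)$), both of which we have at our disposal. Modulo this verification, the rest rests either on deep but well-established external inputs (Selberg, Burger--Sarnak/Clozel, and a-T-menability of rank-one lattices) or on essentially formal observations (property $(\tau)$ passes to any family of subgroups containing a $(\tau)$-family, and $(\tau)$ is equivalent to the expander property for the box space).
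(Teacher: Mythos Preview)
Your overall strategy coincides with the paper's: argue the contrapositive, use that congruence subgroups of arithmetic lattices in $SO(n,1)$ have property $(\tau)$ (the paper refers to \cite[page 16]{Lubotzky:2004xw} for this; your attributions to Selberg and Burger--Sarnak/Clozel are more specific but amount to the same input), and deduce that if CSP held then $(\Gamma_k)$ would inherit property $(\tau)$, forcing all four answers to be `no'.

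The difference is in how the implication ``property $(\tau)$ for $(\Gamma_k)$ $\Rightarrow$ answer to (4) is no'' is established. The paper does this in one line by citing Higson's unpublished note \cite{Higson:1999km}, which shows directly that a property-$(\tau)$ box space gives a negative answer to (4); the implications (4)$\Rightarrow$(3), (4)$\Rightarrow$(2), (4)$\Rightarrow$(1) then follow formally from \cite{Oyono-Oyono:2009ua} and the logic already recorded in Example~\ref{freeex}. You instead try to reach (4) by transplanting the full Oyono-Oyono--Yu computation of Example~\ref{cex} (maximal assembly an isomorphism, reduced assembly not surjective) to this setting, and you correctly identify this as the main obstacle. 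That obstacle is genuine as you have framed it --- verifying that \cite{Oyono-Oyono:2009ua} goes through for an arbitrary a-T-menable group and an arbitrary $(\tau)$-family is more than a formality --- but it is also unnecessary: Higson's argument works for any property-$(\tau)$ box space and gives what you need without appealing to the maximal assembly map at all. Your separate treatments of (1) and (2) are correct but redundant once (4) is handled, since you already observed that it suffices to refute (4).
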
 

\begin{proof}
A result of N. Higson \cite{Higson:1999km} shows that if $\Gamma$ has property $(\tau)$ with respect to $(\Gamma_k)$, then the answer to question (4) from Example \ref{freeex} (whence also all the others, using \cite{Oyono-Oyono:2009ua}) is `no'.   Thus if the answer to any of (1) to (4) is `yes', the family $(\Gamma_k)$ does not have property $(\tau)$; following the discussion on \cite[page 16]{Lubotzky:2004xw}, essentially using the fact that the congruence subgroups do have property $(\tau)$, this implies Serre's conjecture for $\Gamma$.
\end{proof}

Serre's conjecture is a deep and well-studied problem; for its current status, see \cite{Lubotzky:1997wq}.  A natural family of subgroups to study in this regard is the analogue of that in Example \ref{freeex}; indeed, it is not hard to see that if any family does not have property $(\tau)$ then this one cannot.
\end{serreex}

\section{Property A and the maximal Roe algebra}\label{asec}

In this section we prove Proposition \ref{aprop}.  We first recall one of the possible definitions of property A (cf.\ e.g.\ Theorem 3 in \cite{Brodzki:2007mi} or \cite{Tu:2001bs}).

\begin{propa}\label{propa}
$X$ is said to have \emph{property A} if for any $R,\epsilon>0$ there exists a map $\xi:X\to \ell^2(X)$ and $S>0$ such that:
\begin{itemize}
\item  for all $x,y\in{}X$, $\xi_x(y)\in[0,1]$;
\item for all $x\in{}X$, $\|\xi_x\|_2=1$;
\item  for all $x\in{}X$, $\xi_x$ is supported in $B(x,S)$;
\item if $d(x,y)\leq{}R$ then $|1-\langle{}\xi_x,\xi_y\rangle|<\epsilon$. 
\end{itemize}
\end{propa}

Given a map $\xi$ as in the above definition we associate:
\begin{itemize}
\item a `partition of unity' $\{\phi_y\}_{y\in{}X}$ defined by $\phi_y(x)=\xi_x(y)$;
\item a `kernel' $k:X\times{}X\to[0,1]$ defined by $k(x,y)=\langle{}\xi_x,\xi_y\rangle$;
\item a `Schur multiplier' $M_k^{alg}:\C_u[X]\to\C_u[X]$ defined by $(M_kT)(x,y)=k(x,y)T(x,y)$.
\end{itemize}

Now, the proof of \cite[Lemma 11.17]{Roe:2003rw} shows that for $T\in \C_u[X]\subseteq\mathcal{B}(\ell^2(X))$, one has the formula
\begin{displaymath}
M^{alg}_k(T)=\sum_{x\in{}X}\phi_xT\phi_x
\end{displaymath}
(convergence in the strong topology on $\mathcal{B}(\ell^2(X))$), whence the map $M_k^{alg}$ extends to a unital completely positive map from $C^*_u(X)$ to itself.  In our context, we would like to moreover extend $M_k^{alg}$ to $C^*_{u,max}(X)$; unfortunately, the formula above makes no sense here.  Instead we use the following simple lemma.

\begin{finlem}\label{finlem}
Let $\xi$ be as in Definition \ref{propa}, and $k$ the associated kernel.  Then for any $S_0>0$ there exists a finite collection of functions $\phi_i:X\to[0,1]$, $i=1,...,N$ such that for all $T\in\C_u^{S_0}[X]$,
\begin{displaymath}
M_k^{alg}(T)=\sum_{i=1}^N\phi_iT\phi_i.
\end{displaymath}
\end{finlem}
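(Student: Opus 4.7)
The plan is to start from the already-noted formula $M_k^{alg}(T) = \sum_{z \in X} \phi_z T \phi_z$ (interpreted strongly on $\ell^2(X)$, as in the paragraph preceding the lemma) and to collapse the typically infinite sum on the right into a finite sum of the required form by grouping the indices $z$ according to a suitable finite coloring of $X$. Bounded geometry makes such a coloring available, and the finite propagation of $T$ ensures that the cross-terms introduced by regrouping vanish.

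Concretely, first fix $S > 0$ large enough that every $\xi_x$ is supported in $B(x, S)$, so that each $\phi_z$ is supported in $B(z, S)$ as a function on $X$. Next, consider the graph on $X$ with $z, z'$ adjacent iff $0 < d(z, z') \leq 2S + S_0$; bounded geometry bounds each vertex's degree by $N_{2S+S_0} - 1$, so a greedy coloring yields a partition $X = \bigsqcup_{i=1}^N C_i$ with $N \leq N_{2S+S_0}$ and with any two distinct points of a single $C_i$ lying at distance strictly greater than $2S + S_0$. Define $\phi_i := \sum_{z \in C_i} \phi_z$. For any $x \in X$, the set $C_i \cap B(x, S)$ contains at most one point (two such points would be mutually within distance $2S < 2S + S_0$, contradicting the coloring), so $\phi_i(x)$ equals a single $\phi_z(x) \in [0,1]$ or zero; in particular $\phi_i : X \to [0,1]$. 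Finally, expanding entrywise,
\[
(\phi_i T \phi_i)(x, y) = \sum_{z, z' \in C_i} \phi_z(x)\, T(x, y)\, \phi_{z'}(y);
\]
if a summand is nonzero, then $d(x, y) \leq S_0$ (by propagation of $T$), $d(z, x) < S$ and $d(z', y) < S$, whence $d(z, z') < 2S + S_0$, which together with $z, z' \in C_i$ forces $z = z'$. Hence $(\phi_i T \phi_i)(x, y) = \sum_{z \in C_i} \phi_z(x)\, T(x, y)\, \phi_z(y)$, and summing over $i = 1, \ldots, N$ recovers the defining formula for $M_k^{alg}(T)(x, y)$.

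The one point requiring real care is arranging the $\phi_i$ to be $[0,1]$-valued rather than merely bounded; this is precisely why the coloring is required to separate points of the same class by strictly more than $2S + S_0$, the extra $S$ on each side of the propagation budget $S_0$ being exactly what guarantees that at most one term contributes to $\phi_i(x)$. Bounded geometry is essential both to produce a finite coloring and to ensure that $N$ is independent of $T$.
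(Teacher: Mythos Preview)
Your proof is correct and follows essentially the same approach as the paper's: both construct a finite partition of $X$ into sets whose points are pairwise more than $2S+S_0$ apart (you via a greedy graph-coloring argument, the paper via iteratively extracting maximal $(2S+S_0)$-separated subsets), define $\phi_i$ as the sum of $\phi_z$ over each color class, and then check that cross-terms vanish for $T\in\C_u^{S_0}[X]$. Your write-up is in fact somewhat more explicit than the paper's about why $\phi_i$ lands in $[0,1]$ and why the cross-terms disappear.
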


\begin{proof}
Let $S>0$ be a support bound for $\xi$ as in definition \ref{propa}.  Let $A_1$ be a maximal $S_1:=2S+S_0$-separated subset of $X$, and inductively choose $A_k$ to be a maximal $S_1$-separated subset of $X\backslash(A_1\cup...\cup{}A_{k-1})$.  Note that $A_n$ is empty for all $n$ suitably large: if not, there exist $x_n\in{}A_n$ for all $n$, and by maximality of each $A_k$, one has $d(x_n,A_{k})\leq2S_1$ for all $k<n$; hence $|B(x_n,2S_1)|\geq{}n-1$ for all $n$, contradicting bounded geometry of $X$.  Define now 
\begin{displaymath}
\phi_i=\sum_{x\in{}A_i}\phi_x,
\end{displaymath}
where $\{\phi_x\}$ is the partition of unity associated to $\xi$; the sum is taken in $l^\infty(X)$, converging with respect to the weak-$*$ topology.  Hence each $\phi_i$ as a well-defined element of $l^\infty(X)\subseteq\C_u[X]$.  Using the facts that $\langle{}\xi_x,\xi_y\rangle=0$ for $d(x,y)\geq{}2S$, and that for $(T(x,y))\in\C_u^{S_0}[X]$ one has that $T(x,y)=0$ for $d(x,y)\geq{}S_0$, it is routine to check that $\{\phi_i\}_{i=1}^N$ has the properties claimed.  
\end{proof}

\begin{extendlem}\label{extendlem}
Say that $\xi$ is an in Definition \ref{propa}.  Then the associated linear map $M_k^{alg}:\C_u[X]\to\C_u[X]$ extends to a unital completely positive (u.c.p.) map on any $C^*$-algebraic completion of $\C_u[X]$. 
\end{extendlem}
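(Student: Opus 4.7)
The plan is to realise the extension of $M_k^{alg}$ as a pointwise norm limit, on any $C^*$-completion $A$ of $\C_u[X]$, of genuinely u.c.p.\ maps coming from the decomposition of Lemma \ref{finlem}.

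For each $S_0>0$, Lemma \ref{finlem} furnishes functions $\phi_1^{(S_0)},\ldots,\phi_{N_{S_0}}^{(S_0)}\in l^\infty(X)\subseteq A$ with values in $[0,1]$ such that $M_k^{alg}(T)=\sum_i\phi_i^{(S_0)}T\phi_i^{(S_0)}$ for every $T\in\C_u^{S_0}[X]$. Applying this identity to the identity matrix $1$ (which has propagation $0$) yields $\sum_i(\phi_i^{(S_0)})^2=M_k^{alg}(1)=1$, using that $k(x,x)=\|\xi_x\|_2^2=1$. Consequently the map $\Phi_{S_0}:A\to A$ defined by $\Phi_{S_0}(a)=\sum_i\phi_i^{(S_0)}a\phi_i^{(S_0)}$ is a finite sum of compressions by self-adjoint elements, satisfies $\Phi_{S_0}(1)=1$, and is therefore u.c.p.\ and contractive on $A$.

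Next I would check that $M_k^{alg}$ is contractive on $\C_u[X]$ in the $A$-norm: any $T\in\C_u[X]$ lies in some $\C_u^{S_0}[X]$, and then
\begin{displaymath}
\|M_k^{alg}(T)\|_A=\|\Phi_{S_0}(T)\|_A\leq\|T\|_A.
\end{displaymath}
This gives a bounded linear extension $\Phi:A\to A$ of $M_k^{alg}$. To see $\Phi$ is u.c.p., I would show $\Phi(a)=\lim_{S_0\to\infty}\Phi_{S_0}(a)$ in norm for every $a\in A$. Given $\epsilon>0$ pick $T\in\C_u[X]$ with $\|a-T\|_A<\epsilon$ and let $S^*$ be the propagation of $T$; for all $S_0,S_0'\geq S^*$ one has $\Phi_{S_0}(T)=M_k^{alg}(T)=\Phi_{S_0'}(T)$, so by the triangle inequality $\|\Phi_{S_0}(a)-\Phi_{S_0'}(a)\|\leq 2\epsilon$. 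Thus $(\Phi_{S_0}(a))$ is Cauchy, and its limit agrees with $\Phi(a)$ (since the two maps coincide on the dense subspace $\C_u[X]$). Unitality and complete positivity then pass to the limit: $\Phi(1)=\lim\Phi_{S_0}(1)=1$, and for each $n$ the norm limit of positive maps $\Phi_{S_0}\otimes\id_n$ on $M_n(A)$ is positive, because positive elements form a closed set in any $C^*$-algebra.

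The one step that really needs care is the identity $\sum_i(\phi_i^{(S_0)})^2=1$; this is the payoff of Lemma \ref{finlem} applied to $T=1$ together with the normalisation $\|\xi_x\|_2=1$, and it is precisely what makes each approximant $\Phi_{S_0}$ unital (hence u.c.p., not merely c.p.). The remaining content, a coherent-family argument combined with the fact that pointwise-norm limits of u.c.p.\ maps are u.c.p., is essentially routine once the approximants $\Phi_{S_0}$ have been set up.
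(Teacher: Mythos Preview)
Your proof is correct and rests on the same key ingredient as the paper's: Lemma \ref{finlem}, which expresses $M_k^{alg}$ on each $\C_u^{S_0}[X]$ as a finite sum of compressions $T\mapsto\sum_i\phi_i T\phi_i$ (your observation that $\sum_i\phi_i^2=M_k^{alg}(1)=1$ is exactly what makes this unital). The paper's argument is slightly more direct, however: rather than building a net $(\Phi_{S_0})$ of globally defined u.c.p.\ maps on $A$ and passing to a pointwise-norm limit, it simply notes that for any fixed $T\in M_n(\C_u[X])$ one may choose $S_0$ bounding the propagation of all its entries, and then $M_k^n(T)=\sum_i\diag(\phi_i^{(S_0)})\,T\,\diag(\phi_i^{(S_0)})$ already shows $M_k^{alg}$ is u.c.p.\ on the dense $*$-subalgebra $\C_u[X]$ for the order structure inherited from $A$; the continuous (contractive) extension to $A$ is then automatically u.c.p. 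Your limit argument works, but is unnecessary---the $\phi_i$'s need not cohere across different values of $S_0$, since complete positivity only has to be verified one matrix at a time.
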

\begin{proof}
Let $T\in{}M_n(\C_u[X])$.  Then there exists $S_0>0$ such that all matrix entries of $T$ have propagation at most $S_0$.  Hence by Lemma \ref{finlem} there exist $\phi_i$, $i=1,...,N,$ such that if $\diag(\phi_i)\in{}M_n(\C_u[X])$ is the diagonal matrix with all non-zero entries $\phi_i$, then the matrix augmentation of $M_k^{alg}$, say $M_k^n$, is given by
\begin{displaymath}
M_k^n(T)=\sum_{i=1}^N\diag(\phi_i)T\diag(\phi_i);
\end{displaymath}
this implies that if $A$ is any $C^*$-algebraic completion of $\C_u[X]$, then the map
\begin{displaymath}
M_k^{alg}:\C_u[X]\to\C_u[X]\subseteq{}A
\end{displaymath}
is u.c.p. (for the order structure coming from $A$).  It is in particular then bounded, and thus extends to a u.c.p. map from $A$ to itself.
\end{proof}

Consider now the diagram
\begin{equation}\label{acommutes}
\vcenter{\xymatrix{ \C_u[X] \ar[r]^{M_k^{alg}} \ar[d] &  \C_u[X] \ar[d] \\
C^*_{u,max}(X) \ar[r]^{M_k^{max}} \ar[d]^\lambda & C^*_{u,max}(X) \ar[d]^\lambda \\
C^*_u(X) \ar[r]^{M_k^\lambda} & C^*_u(X), }}
\end{equation}
where the maps in the lower two rows are the u.c.p. maps given by the corollary.  The diagram commutes, as it does on the level of $\C_u[X]$ and as all the maps in the lower square are continuous.  The following simple lemma essentially completes the proof of Proposition \ref{aprop}.

\begin{closlem}\label{closlem}
For any $S>0$, $\C^S_u[X]$ is closed in $C^*_{u,max}(X)$.  In particular, for a Schur multiplier $M_k^{max}$ built out of a map $\xi$ with support parameter $S$ as in Definition \ref{propa}, the image of $M_k^{max}$ is contained in $\C^{2S}_u[X]$.
\end{closlem}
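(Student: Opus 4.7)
The plan is to show that on $\C^S_u[X]$ the max-norm and the entrywise sup-norm $\|T\|_\infty := \sup_{x,y}|T(x,y)|$ are equivalent; closedness then follows from completeness in $\|\cdot\|_\infty$. Observe first that $\C^S_u[X]$, as a vector space of functions on the entourage $E_S=\{(x,y): d(x,y)\leq S\}$, is canonically a closed subspace of $\ell^\infty(E_S)$, hence a Banach space in $\|\cdot\|_\infty$.

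For the equivalence of norms I would argue in two directions. For the lower bound, the defining regular representation on $\ell^2(X)$ is a particular $*$-representation of $\C_u[X]$, so $\|T\|_{max}\geq\|T\|_{\B(\ell^2(X))}\geq|\langle T\delta_y,\delta_x\rangle|=|T(x,y)|$ for all $x,y$, giving $\|T\|_\infty\leq\|T\|_{max}$. For the upper bound, Lemma \ref{fundlem} applied to the universal representation produces a constant $C_S$ with $\|T\|_{max}\leq C_S\|T\|_\infty$ for every $T\in\C^S_u[X]$. Combining the two estimates, the identity map $(\C^S_u[X],\|\cdot\|_\infty)\to(\C^S_u[X],\|\cdot\|_{max})$ is a bi-Lipschitz isomorphism; since the source is complete, so is the target, and therefore $\C^S_u[X]$ is closed in $C^*_{u,max}(X)$.

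For the ``in particular'' clause, I would first check the algebraic statement: if $\xi$ has support parameter $S$, then for any $x,y\in X$ with $d(x,y)>2S$ the balls $B(x,S)$ and $B(y,S)$ are disjoint, so $\xi_x$ and $\xi_y$ have disjoint supports, whence $k(x,y)=\langle\xi_x,\xi_y\rangle=0$. Thus $M_k^{alg}$ already sends $\C_u[X]$ into $\C^{2S}_u[X]$. Now $M_k^{max}:C^*_{u,max}(X)\to C^*_{u,max}(X)$ is continuous (being u.c.p.\ by Corollary \ref{extendlem}), $\C_u[X]$ is dense in $C^*_{u,max}(X)$, and $\C^{2S}_u[X]$ is closed in $C^*_{u,max}(X)$ by the first part; so the image of $M_k^{max}$ on the whole of $C^*_{u,max}(X)$ lies in $\C^{2S}_u[X]$.

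There is no real obstacle here: everything reduces to Lemma \ref{fundlem} plus a routine density/closedness argument. The only point requiring a little care is remembering to derive the lower bound $\|T\|_\infty\leq\|T\|_{max}$ from the regular representation (which is not quite stated in Lemma \ref{fundlem}), so that the $\sup$-norm completeness transfers to max-norm completeness rather than merely giving a one-sided estimate.
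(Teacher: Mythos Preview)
Your proof is correct and follows essentially the same approach as the paper: both use that the regular representation gives $\|T\|_\infty\leq\|T\|_{max}$ while Lemma~\ref{fundlem} gives $\|T\|_{max}\leq C_S\|T\|_\infty$, and both deduce the ``in particular'' clause from $M_k^{alg}(\C_u[X])\subseteq\C_u^{2S}[X]$ together with density and closedness. Your packaging via explicit norm equivalence and completeness of $\ell^\infty(E_S)$ is slightly more direct than the paper's sequence argument (which routes through $\lambda$ to identify the limiting matrix), but the content is the same.
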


\begin{proof}
Say $(T_n)$ is a sequence in $\C^S_u[X]$ converging in the $C_{u,max}^*(X)$ norm to some $T$.  It follows that $\lambda(T_n)\to{}\lambda(T)$ in $C^*_u(X)$, whence $\lambda(T)$ is in $\C^S_u[X]$ (considered as a subalgebra of $C^*_u(X)$) by uniqueness of matrix representations on $\mathcal{B}(\ell^2(X))$, and moreover that the matrix entries of $T_n$ converge uniformly to those of $\lambda(T)$.  Lemma \ref{fundlem} now implies that the $T_n$ converge to $\lambda(T)$ (qua element of $\C^S_u[X]$) in the $C^*_{u,max}(X)$ norm.

The remaining comment follows as $M_k^{max}(\C_u[X])\subseteq\C_u^{2S}[X]$ under the stated assumptions.
\end{proof}

\begin{proof}[Proof of Proposition \ref{aprop}]
Using property A and Lemma \ref{fundlem}, there exists a sequence of kernels $(k_n)$ such that the associated Schur multipliers $M_{k_n}^\lambda$ and $M_{k_n}^{max}$ converge point-norm to the identity on $C^*_u(X)$ and $C^*_{u,max}(X)$ respectively.  Now, say $T\in{}C^*_{u,max}(X)$ is in the kernel of $\lambda$.  Hence $\lambda(M_{k_n}^{max}(T))=M^\lambda_{k_n}(\lambda(T))=0$ for all $n$, using commutativity of (\ref{acommutes}).  However, Lemma \ref{closlem} implies that $M_{k_n}^{max}(T)\in\C_u[X]$ for all $n$; as $\lambda$ is injective here, it must be the case that $M_{k_n}^{max}(T)=0$ for all $n$.  These elements converge to $T$, however, so $T=0$.  Hence $\lambda$ is injective as required.
\end{proof}

\section{The twisted uniform Roe algebra}\label{twist}

\begin{cembed}
Let $H$ be a real Hilbert space.  A map $f:X\to H$ is called a \emph{coarse embedding} if there exist non-decreasing functions $\rho_-,\rho_+:\R_+\to\R_+$ such that $\rho_-(t)\to\infty$ as $t\to\infty$ and for all $x,y\in X$
\begin{displaymath}
\rho_-(d(x,y))\leq\|f(x)-f(y)\|_H\leq\rho_+(d(x,y)).
\end{displaymath}
\end{cembed} 

From now on in this paper we fix a coarse embedding $f:X\to H$ of $X$ into a real Hilbert space $H$.  Passing to a subspace if necessary, we may assume moreover that $V:=\textrm{span}(f(X))$ is dense in $H$.

We rely heavily on material from the paper \cite{Yu:200ve}.  We introduce most of the objects from this paper as we need them, but for the reader's convenience we also give a list of notation in Appendix \ref{appendix2}; this notation is compatible with that from \cite{Higson:1999be}, which we will also often refer to.  We also introduce several new objects that are necessary at various stages in the proof; these too are included in the list in Appendix \ref{appendix2}.

\subsection{The twisted uniform Roe algebra}

In this section we introduce the maximal and reduced twisted uniform Roe algebras, which are (slightly simpler) variations on the twisted Roe algebra introduced by Yu in \cite[Section 5]{Yu:200ve}.  We then prove that the maximal and reduced versions are really the same.  

\begin{adef}\label{adef}
Denote by $V_a,V_b$ etc.\ the finite dimensional affine subspaces of $V$.  Let $V_a^0$ be the linear subspace of $V$ consisting of differences of elements of $V_a$.  Let $\Cliff_\C(V_a^0)$ be the complexified Clifford algebra of $V_a^0$ and $\mathcal{C}(V_a)$ the graded $C^*$-algebra of continuous functions vanishing at infinity from $V_a$ into $\Cliff_\C(V_a^0)$.  Let $\mathcal{S}$ be the $C^*$-algebra $C_0(\R)$, graded by even and odd functions, and let $\mathcal{A}(V_a)=\mathcal{S}\hat{\otimes}\mathcal{C}(V_a)$ (throughout, `$\hat{\otimes}$' denotes the graded spatial tensor product of graded $C^*$-algebras).   

If $V_a\subseteq V_b$, denote by $V_{ba}^0$ the orthogonal complement of $V_{a}^0$ in $V_b^0$.  One then has a decomposition $V_b=V_{ba}^0+V_a$ and corresponding (unique) decomposition of any $v_b\in V_b$ as $v_b=v_{ba}+v_{a}$.  Any function $h\in\mathcal{C}(V_a)$ can thus be extended to a multiplier $\tilde{h}$ of $\mathcal{C}(V_b)$ by the formula $\tilde{h}(v_b)=h(v_a)$.

Continuing to assume that $V_a\subseteq V_b$, denote by $C_{ba}:V_b\to\Cliff_\C(V_{ba}^0)$ the function $v_b\mapsto v_{ba}$ where $v_{ba}$ is considered as an element of $\Cliff_\C(V_{ba}^0)$ via the inclusion $V_{ba}^0\subseteq \Cliff_\C(V_{b}^0)$.  Let also $X$ be the unbounded multiplier of $\mathcal{S}$ given by the function $t\mapsto t$.  Define a $*$-homomorphism $\beta_{ba}:\mathcal{A}(V_a)\to\mathcal{A}(V_b)$ via the formula
\begin{displaymath}
\beta_{ba}(g\hat{\otimes}h)=g(X\hat{\otimes}1+1\hat{\otimes}C_{ba})(1\hat{\otimes}\tilde{h}),
\end{displaymath}  
where $g\in\mathcal{S}$, $h\in\mathcal{C}(V_a)$ and the term on the right involving $g$ is defined via the functional calculus for unbounded multipliers.

These maps turn the collection $\{\mathcal{A}(V_a)\}$ as $V_a$ ranges over finite dimensional affine subspaces of $V$ into a directed system.  Define the \emph{$C^*$-algebra of $V$} to be
\begin{displaymath}
\mathcal{A}(V)=\lim_\rightarrow\mathcal{A}(V_a)
\end{displaymath}
\end{adef}

\begin{wdef}\label{wdef}
Let $x$ be a point in $X$.  Define $W_n(x)$ to be the (finite dimensional) subspace of $V$ spanned by $\{f(y):d(x,y)\leq{}n^2\}$.  Write $\beta_n(x):\mathcal{A}(W_n(x))\to\mathcal{A}(V)$ for the map coming from the definition of $\mathcal{A}(V)$ as a direct limit.  
\end{wdef}

From now on, $\R_+\times H$ and its subset $\R_+\times V$ are assumed equipped with the weakest topology for which the projection to $H$ is continuous for the weak topology on the latter, and so that the function $(t,v)\mapsto{}t^2+\|v\|^2$ is continuous.  This topology makes $\R_+\times H$ into a locally compact Hausdorff space.

Note that the inclusion $\beta_{ba}:\mathcal{A}(V_a)\to\mathcal{A}(V_b)$ sends the central subalgebra $C_0(\R_+\times V_a)$ into $C_0(\R_+\times V_b)$ (here $C_0(\R_+)$ has been identified with the even part of $\mathcal{S}$). One has moreover that the limit of the corresponding directed system $(C_0(\R_+\times{}V_a),\beta_{ba})$ is $C_0(\R_+\times H)$ (where $\R_+\times H$ is equipped with the topology above).  

\begin{supp0}\label{supp0}
The \emph{support} of $a\in\mathcal{A}(V)$ is defined to be the complement of all $(t,v)\in\R_+\times H$ such that there exists $g\in C_0(\R_+\times H)$ such that $g(t,v)\neq0$ and $g\cdot a=0$.
\end{supp0}

The following lemma is very well-known; we record it as we will need it several times below.  It is a simple consequence of the fact that the norm of a normal element in a $C^*$-algebra is equal to its spectral radius.

\begin{invclo}\label{invclo}
Say $\mathcal{B}$ is a dense $*$-subalgebra of a $C^*$-algebra $B$.  Assume that the inclusion $\mathcal{B}\to B$ is \emph{isospectral} (i.e.\ an element of the unitisation $\tilde{\mathcal{B}}$ of $\mathcal{B}$ is invertible if and only if it is invertible in $\tilde{B}$).  Then any $*$-representation of $\mathcal{B}$ extends to $B$. \qed
\end{invclo}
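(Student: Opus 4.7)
The plan is to show that any $*$-representation $\pi:\mathcal{B}\to\mathcal{B}(\mathcal{H})$ is automatically contractive with respect to the $C^*$-norm on $B$; continuity plus density then lets us extend $\pi$ to all of $B$, and the usual $3\epsilon$ argument shows the extension remains a $*$-homomorphism.

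To prove contractivity, I first extend $\pi$ to a unital $*$-homomorphism $\tilde\pi:\tilde{\mathcal{B}}\to\mathcal{B}(\mathcal{H})$ by sending the adjoined unit to the identity operator on $\mathcal{H}$. A unital $*$-homomorphism into any unital algebra never enlarges the spectrum of an element: if $b^*b-\lambda$ is invertible in $\tilde{\mathcal{B}}$, then applying $\tilde\pi$ produces an inverse for $\tilde\pi(b^*b)-\lambda$. Hence for every $b\in\mathcal{B}$,
\begin{displaymath}
\spec_{\mathcal{B}(\mathcal{H})}\bigl(\pi(b^*b)\bigr)\subseteq\spec_{\tilde{\mathcal{B}}}(b^*b),
\end{displaymath}
and the isospectrality hypothesis identifies the right-hand side with $\spec_{\tilde{B}}(b^*b)=\spec_{B}(b^*b)\cup\{0\}$ (or just $\spec_B(b^*b)$).

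Now I invoke the hint. The element $b^*b$ is self-adjoint in $B$, and $\pi(b^*b)$ is self-adjoint in $\mathcal{B}(\mathcal{H})$, so both are normal; in a $C^*$-algebra the norm of a normal element equals its spectral radius. Combining this with the spectral inclusion above gives
\begin{displaymath}
\|\pi(b)\|^2=\|\pi(b)^*\pi(b)\|=\|\pi(b^*b)\|=r\bigl(\pi(b^*b)\bigr)\leq r_{B}(b^*b)=\|b^*b\|_B=\|b\|_B^2,
\end{displaymath}
so $\pi$ is a contraction from $(\mathcal{B},\|\cdot\|_B)$ into $\mathcal{B}(\mathcal{H})$. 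By density of $\mathcal{B}$ in $B$, $\pi$ admits a unique bounded linear extension to $B$, which inherits the $*$-algebra properties by continuity of multiplication and involution. The main (minor) subtlety to watch is the non-unital case, which is handled cleanly by the unital extension to $\tilde{\mathcal{B}}$; no other obstacle arises.
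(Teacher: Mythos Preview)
Your argument is correct and is exactly the standard route the paper has in mind: the paper does not write out a proof at all, but records the lemma with the remark that it ``is a simple consequence of the fact that the norm of a normal element in a $C^*$-algebra is equal to its spectral radius'' and then places a \qed. Your write-up fills in precisely this argument.
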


We are now ready to give our variant of the \emph{twisted Roe algebra} from \cite{Yu:200ve}. 

\begin{tra}\label{tra}
The \emph{algebraic twisted uniform Roe algebra}, denoted $\C[X,\mathcal{A}]$, is defined as a set to consist of all functions $T:X\times X\to\mathcal{A}(V)$ such that
\begin{enumerate}
\item there exists $N\in\N$ such that for all $x,y\in X$, $T(x,y)\in\beta_N(x)(\mathcal{A}(W_N(x)))$, where $\beta_N(x)$ is as in definition \ref{wdef};
\item there exists $M>0$ such that $\|T(x,y)\|\leq{}M$ for all $x,y\in{}X$;
\item there exists $r_1>0$ such that if $d(x,y)>r_1$, then $T(x,y)=0$;
\item there exists $r_2>0$ such that $\textrm{support}(T(x,y))$ is contained in the open ball of radius $r_2$ about $(0,f(x))$ in $\R_+\times{}H$;
\item there exists $c>0$ such that if $Y\in\R\times{}W_N(x)$ has norm less than one and if $T_1(x,y)$ is such that $\beta_N(x)(T_1(x,y))=T(x,y)$, then the derivative of the function $T_1(x,y):\R\times W_N(x)\to\Cliff_\C(W_N(x))$ in the direction of $Y$, denoted $D_Y(T_1(x,y))$, exists, and $\|D_Y(T_1(x,y))\|\leq{}c$;
\item for all $x,y\in X$, $T(x,y)$ can be written as the image under $\beta_N(x)$ of a finite linear combination of elementary tensors from $\mathcal{A}(W_N(x))\cong\mathcal{S}\hat{\otimes}\mathcal{C}(W_N(x))$.
\end{enumerate}
$\C[X,\mathcal{A}]$ is then made into a $*$-algebra via matrix multiplication and adjunction, together with the $*$-operations on $\mathcal{A}(V)$.

Let now
\begin{displaymath}
E_X=\Big\{\sum_{x\in{}X}a_x[x]~\big|~a_x\in\mathcal{A}(V),\sum_{x\in X}{a_x^*a_x} \textrm{ is norm convergent}\Big\}.
\end{displaymath}
Equipped with the $\mathcal{A}(V)$-valued inner product and $\mathcal{A}(V)$-action given respectively by
\begin{displaymath}
\Big\langle{}\sum_{x\in{}X}a_x[x],\sum_{x\in{}X}b_x[x]\Big\rangle=\sum_{x\in X}a_x^*b_x \textrm{ and } \Big(\sum_{x\in X}a_x[x]\Big)a=\sum_{x\in X}a_xa[x],
\end{displaymath}
$E_X$ becomes a Hilbert $\mathcal{A}(V)$-module.  It is moreover equipped with a (faithful) representation of $\C[X,\mathcal{A}]$ by matrix multiplication.  The \emph{twisted uniform Roe algebra of $X$}, denoted $C^*_u(X,\mathcal{A})$, is defined to be the norm closure of $\C[X,\mathcal{A}]$ in $\mathcal{L}_{\mathcal{A}(V)}(E_X)$.

Finally, one defines the universal norm on $\C[X,\mathcal{A}]$ to be the supremum over all norms coming from $*$-representations to the bounded operators on a Hilbert space; it is well-defined by an analogue of Lemma \ref{fundlem}.  The \emph{maximal uniform twisted Roe algebra of $X$}, denoted $C^*_{u,max}(X,\mathcal{A})$, is the completion of $\C[X,\mathcal{A}]$ in this norm.
\end{tra} 

Note that there is a canonical quotient map
\begin{displaymath}
\lambda:C^*_{u,max}(X,\mathcal{A})\to C^*_u(X,\mathcal{A}).
\end{displaymath}

Having made these definitions, we are aiming in this section for the following result, which is stated without proof on page 235 of \cite{Yu:200ve}; nonetheless, we include a relatively detailed argument as it is necessary for the results of the rest of the paper and seemed to fit well within the philosophy of this piece.

\begin{mer}\label{mer}
The canonical quotient map $\lambda:C^*_{u,max}(X,\mathcal{A})\to C^*_{u}(X,\mathcal{A})$ is an isomorphism.
\end{mer}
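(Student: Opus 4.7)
The plan is to prove this via the standard philosophy for proper actions: properness makes the maximal and reduced completions of a crossed-product-type algebra coincide. The ``properness'' here is encoded in condition (4) of Definition~\ref{tra}, which constrains each matrix entry $T(x,y)$ to have support in a ball around $(0,f(x))$; as $x$ varies, these supports spread out in $\R_+\times H$, which is the structural feature I would exploit.

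First I would identify $C_0(\R_+\times H)$ as a central $*$-subalgebra of $M(\mathcal{A}(V))$, arising from the even part of $\mathcal{S}=C_0(\R)$ together with the function-theoretic center of $\mathcal{C}$, passed to the direct limit. This subalgebra acts entrywise on $\C[X,\mathcal{A}]$ and commutes with every element, so it sits centrally inside both $C^*_u(X,\mathcal{A})$ and $C^*_{u,max}(X,\mathcal{A})$; any compactly supported approximate identity $(\phi_j)$ in $C_0(\R_+\times H)$ will be the engine of the localisation step.

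The heart of the argument is the following localisation claim: for each compact $K\subseteq\R_+\times H$, the subspace
\[
\C[X,\mathcal{A}]_K := \{T\in\C[X,\mathcal{A}] : \supp T(x,y)\subseteq K \text{ for all } x,y\}
\]
embeds into a single finite matrix algebra $M_n(\mathcal{A}(V_K))$ for some $n\in\N$ and some finite-dimensional affine subspace $V_K\subseteq V$, and hence carries a unique $C^*$-norm. Indeed, $T(x,y)\ne 0$ together with $\supp T(x,y)\subseteq K$ forces $B_{r_2}((0,f(x)))\cap K\ne\emptyset$, which by the coarse embedding property and bounded geometry restricts $x$ to a finite subset $Y_K\subseteq X$; for each such $x$, only finitely many $y\in X$ (those in $B_{r_1}(x)$) can give a nonzero entry, and setting $V_K:=\Span\bigcup_{x\in Y_K}W_N(x)$ produces a finite-dimensional target. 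Since $\mathcal{A}(V_K)=\mathcal{S}\hat{\otimes}\mathcal{C}(V_K)$ is type I and hence nuclear, $M_n(\mathcal{A}(V_K))$ has a unique $C^*$-norm, so the maximal and reduced norms on $\C[X,\mathcal{A}]_K$ must agree (with Lemma~\ref{invclo} supplying the isospectrality needed to lift the statement from $M_n(\mathcal{A}(V_K))$ back to $\C[X,\mathcal{A}]_K$).

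To finish, for arbitrary $T\in\C[X,\mathcal{A}]$ I would write $T$ as the limit of the cutoffs $\phi_j T \phi_j\in\C[X,\mathcal{A}]_{\supp\phi_j}$, apply the equality of maximal and reduced norms on each localisation, and pass to the limit in $j$. The main obstacle is exactly this final step: one must verify that in \emph{every} $*$-representation of the maximal completion the central approximate identity $(\phi_j)$ converges strongly to the identity, so that $\pi(T)$ is genuinely recovered as the limit of $\pi(\phi_j T\phi_j)$. This reduces to checking non-degeneracy of the central action of $C_0(\R_+\times H)$ together with lower semicontinuity of the maximal norm -- standard but delicate, requiring care with the approximate-unit structure of $\C[X,\mathcal{A}]$ over its centre.
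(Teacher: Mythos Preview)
Your localisation strategy has a genuine gap at the final step: the central action of $C_0(\R_+\times H)$ on $\C[X,\mathcal{A}]$ is \emph{degenerate}, so no compactly supported approximate identity $(\phi_j)$ in $C_0(\R_+\times H)$ can recover a general $T$. To see this, take any diagonal $T\in\C[X,\mathcal{A}]$ with $\inf_x\|T(x,x)\|>0$; for instance $T=\beta_t(g\hat\otimes 1)$ for a fixed nonzero $g\in\mathcal{S}_0$ and any $t$ (cf.\ Section~\ref{bdsec}). By condition~(4) of Definition~\ref{tra} the support of $T(x,x)$ lies in the ball of radius $r_2$ about $(0,f(x))$; since $X$ is infinite with bounded geometry and $f$ is a coarse embedding, the points $f(x)$ are unbounded in $H$, so for any fixed $\phi_j$ one has $\phi_j\cdot T(x,x)=0$ for all but finitely many $x$. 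Hence $\|T-\phi_jT\|\geq\inf_x\|T(x,x)\|>0$ in both the reduced and maximal norms (via the analogue of Lemma~\ref{fundlem}). The non-degeneracy you flag as ``standard but delicate'' therefore fails outright, and with it the passage from compact localisations back to the whole algebra.

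There is a related problem with your finiteness claim for $\C[X,\mathcal{A}]_K$: the parameter $r_2$ in condition~(4) depends on $T$, not on $K$, so your set $Y_K$ of relevant $x$'s is not uniformly finite as $T$ ranges over $\C[X,\mathcal{A}]_K$, and the claimed embedding into a single $M_n(\mathcal{A}(V_K))$ does not exist.

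The paper's proof sidesteps both issues by localising not to compact subsets of $\R_+\times H$ but to the open sets $O_r=\bigcup_{x\in X}B(f(x),r)$. These are adapted to condition~(4), so $\C[X,\mathcal{A}]=\bigcup_r\C[X,\mathcal{A}]_{O_r}$ is a genuine exhaustion. For fixed $r$ one then uses bounded geometry to colour $X=A_1\sqcup\cdots\sqcup A_n$ so that the balls $B(f(x),r)$ for $x$ in a single colour class are pairwise disjoint (Lemma~\ref{geomlem}); over each colour class the algebra becomes a uniformly bounded product of local pieces, each isospectrally embedded in a finite matrix algebra over an abelian $C^*$-algebra (Lemma~\ref{disj}), and the colour classes are reassembled via iterated pushouts (Lemma~\ref{pastelem}). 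Your intuition that condition~(4) encodes properness is correct, but the exploitation must track how the supports move with $x$ rather than trying to pin them to a fixed compact set.
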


The proof proceeds by showing that $\C[X,\mathcal{A}]$ is built up from simple parts for which the representation on $E_X$ is the same as the maximal one.  The following definition introduces the `parts' we use.

\begin{suppo}\label{supp}
The \emph{support} of $T\in \C[X,\mathcal{A}]$ is defined to be 
\begin{displaymath}
\{(x,y,u)\in{}X\times X\times(\R_+\times H)~|~u\in\textrm{support}(T(x,y))\}.
\end{displaymath} 
Let $O$ be an open subset $\R_+\times H$.  Define $\C[X,\mathcal{A}]_O$ to be the subset of  $\C[X,\mathcal{A}]$ whose elements have support in $X\times X \times O$, which is a $*$-algebra. Define $C^*_u(X,\mathcal{A})_O$ to be the closure of $\C[X,\mathcal{A}]_O$ in $C^*_u(X,\mathcal{A})$.
\end{suppo}

In the following, we will be dealing with various $*$-subalgebras of $\C[X,\mathcal{A}]$.  For any such $*$-algebra, the \emph{$E_X$ norm} is defined to be the norm it inherits as a subalgebra of $C^*_{u}(X,\mathcal{A})$, and the \emph{maximal norm} is the norm it inherits as a subalgebra of $C^*_{u,max}(X,\mathcal{A})$

\begin{disj}\label{disj}
For any $v\in V$ and $r>0$, write $B(v,r)$ for $\{(t,w)\in\R\times H~|~t^2+\|v-w\|^2<r^2\}$, the open ball in $\R\times H$ of radius $r$ about $(0,v)$.  Say $A$ is a subset of $X$ such that $B(f(x),r)\cap B(f(y),r)=\emptyset$ whenever $x,y\in A$ and $x\neq y$.
Let 
\begin{displaymath}
O=\cup_{x\in{}A}B(f(x),r).
\end{displaymath}
Then the maximal norm on $\C[X,\mathcal{A}]_{O}$ is (well-defined) and equal to the $E_X$-norm.
\end{disj}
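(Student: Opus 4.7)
The strategy is to use the pairwise disjointness of the balls $B(f(x'),r)$ to decompose every $T \in \C[X,\mathcal{A}]_O$ into mutually annihilating pieces $T_{x'}$ localized in the individual balls, and then to reduce the norm equality to local pieces that embed into a finite matrix $C^*$-algebra. For each $x' \in A$, I choose central multipliers $e_{x'} \in C_0(\R_+\times H)\subseteq M(\mathcal{A}(V))$ supported in $B(f(x'),r)$, with pairwise disjoint supports and summing (on $\supp(T)$, after a harmless enlargement of $r$ by a small $\epsilon>0$) to a function acting as the identity on $T$. I then set $T_{x'}(x,y) := T(x,y) \cdot e_{x'}$; one verifies that each $T_{x'}$ lies in $\C[X,\mathcal{A}]_{B(f(x'),r)}$, since conditions (1)--(6) of Definition \ref{tra} are preserved (after possibly enlarging the parameter $N$ in (1) to accommodate the finite-dimensional variable content of $e_{x'}$). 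Moreover $T(x,y) = \sum_{x'} T_{x'}(x,y)$ is a finite sum per matrix entry: $T_{x'}(x,y)\neq 0$ forces $\|f(x)-f(x')\|<r+r_2$, and the coarse embedding combined with bounded geometry limits the number of contributing $x'$ uniformly in $(x,y)$.

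Since $\supp(ab)\subseteq\supp(a)\cap\supp(b)$ for the central-multiplier action on $\mathcal{A}(V)$ (immediate from Definition \ref{supp0}), the disjointness of the balls yields $T_{x'}T_{x''}^* = T_{x'}^*T_{x''} = 0$ for $x'\neq x''$, so that $TT^* = \sum_{x'}T_{x'}T_{x'}^*$ in $\C[X,\mathcal{A}]_O$ with mutually annihilating positive summands. Given any non-degenerate $*$-representation $\pi$ of $\C[X,\mathcal{A}]_O$, extending $\pi$ to the multiplier algebra turns the $\pi(e_{x'})$ into mutually orthogonal central projections, which block-diagonalizes $\pi$ as a direct sum of $*$-reps $\pi^{x'}$ of the subalgebras $\mathcal{B}_{x'} := \C[X,\mathcal{A}]_{B(f(x'),r)}$, giving
\[
\|\pi(T)\|^2 = \sup_{x' \in A} \|\pi^{x'}(T_{x'})\|^2.
\]
The same argument for the canonical $E_X$-representation gives $\|T\|_{E_X}^2 = \sup_{x'}\|T_{x'}\|_{E_X}^2$. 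The problem therefore reduces to showing that on each $\mathcal{B}_{x'}$, every $*$-representation is dominated by the $E_X$-norm.

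The local analysis is the main technical step. Fix $x' \in A$: by the coarse embedding and bounded geometry, the set $F_{x'} := \{x\in X : \|f(x)-f(x')\|\leq r+r_2\}$ has cardinality bounded by some $N$ independent of $x'$; the nonzero matrix entries of elements of $\mathcal{B}_{x'}$ are confined to $F_{x'}\times F_{x'}$, and each entry lies in the $C^*$-subalgebra $\mathcal{A}(V)_{B(f(x'),r)}\subseteq \mathcal{A}(V)$ of elements with support in the ball. Hence, after a bijection $F_{x'}\leftrightarrow\{1,\ldots,N\}$, $\mathcal{B}_{x'}$ is a $*$-subalgebra of the $C^*$-algebra $M_N(\mathcal{A}(V)_{B(f(x'),r)})$, which has a unique $C^*$-norm. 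The main remaining obstacle, and the technical heart of the proof, is to apply Lemma \ref{invclo}: one needs that $\mathcal{B}_{x'}$ is isospectrally embedded in its $E_X$-closure, and that the latter coincides with the $C^*$-closure inside the ambient matrix algebra. I expect this to follow because the smoothness and finite-tensor conditions (5)--(6) of Definition \ref{tra} describe an inverse-closed dense subalgebra of $M_N(\mathcal{A}(V)_{B(f(x'),r)})$, analogous to the way smooth functions form an inverse-closed subalgebra of $C_0$ on a manifold. Once isospectrality is established, every $*$-representation of $\mathcal{B}_{x'}$ extends continuously to the $E_X$-closure, yielding $\|\pi(T)\|\leq\|T\|_{E_X}$ for all $*$-reps $\pi$ via the reduction of the previous paragraph, and completing the proof.
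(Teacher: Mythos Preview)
Your overall strategy---decompose $T$ into ball-local pieces $T_{x'}$ and reduce to a single-ball analysis---is natural, and the single-ball isospectrality you sketch is close to what is needed. But the block-diagonalization step contains a genuine gap.

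The functions $e_{x'}$ you choose are bump functions, not idempotents in $M(\mathcal{A}(V))$, so $\pi(e_{x'})$ cannot be projections. One might try to repair this by using the characteristic functions $\chi_{x'}$ of the balls instead: these do act as central idempotent multipliers on $\C[X,\mathcal{A}]_O$ and extend to central projections $P_{x'}$ in the multiplier algebra of any $C^*$-completion (because $(\chi_{x'}T)^*(\chi_{x'}T)\leq T^*T$ holds already in the $*$-algebra). However, the family $\{P_{x'}\}$ does \emph{not} sum strictly to $1$: if $T$ has infinitely many nonzero components $T_{x'}$ with $\|T_{x'}\|_{E_X}$ bounded away from zero, then $\|T-\sum_{x'\in F}T_{x'}\|$ fails to tend to zero as $F\nearrow A$ in any $C^*$-norm. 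Hence an arbitrary representation $\pi$ need not decompose as $\bigoplus_{x'}\pi^{x'}$, and the asserted identity $\|\pi(T)\|^2=\sup_{x'}\|\pi^{x'}(T_{x'})\|^2$ is unjustified. (A smaller imprecision: your set $F_{x'}$ and matrix size $N$ depend on the parameter $r_2$ of the particular $T$, so $\mathcal{B}_{x'}$ does not sit inside a single $M_N$; a direct limit is needed.)

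The paper avoids this problem by never separating the balls. It observes directly that $\C[X,\mathcal{A}]_O$ is an algebraic direct limit, over $n$, of $*$-subalgebras of the \emph{product}
\[
M_n\Bigl(\textstyle\prod_{x'\in A}C_0\bigl(B(f(x'),r),\Cliff_\C(W_n(f(x')))\bigr)\Bigr),
\]
the subalgebra being cut out by conditions (2) and (5) of Definition~\ref{tra} holding \emph{uniformly} in $x'$. Isospectrality is checked once, for this subalgebra of the product, and Lemma~\ref{invclo} then gives the result immediately. The uniformity in $x'$ is exactly what closes the gap: it ensures that the inverse of a unitized element, computed coordinatewise in the product, still obeys the uniform bounds and hence lies back in the subalgebra. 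Your per-ball isospectrality supplies each coordinate of that inverse, but without the uniformity argument you cannot reassemble them into an element of $\C[X,\mathcal{A}]_O$---which is precisely why the product, rather than a direct-sum block decomposition, is the right ambient object here.
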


\begin{proof}
As the balls $B(f(x),r)$ are assumed disjoint, a generic element of $\C[X,\mathcal{A}]_O$ looks like an $A$-parametrised sequence $\{T^x\}_{x\in{}A}$ such that each $T^x$ is an operator in $\C[X,\mathcal{A}]_{B(f(x),r)}$ and such that all the the $T^x$'s satisfy conditions (1) to (5) in Definition \ref{tra} for a uniform collection of constants; addition, multiplication and adjunction are then defined pointwise across the sequence. In other words, $\C[X,\mathcal{A}]_O$ is isomorphic to a certain $*$-subalgebra of the direct product $\Pi_{x\in A}\C[X,\mathcal{A}]_{B(f(x),r)}$.

Now, note that for any $T^x$ as above, the support conditions (3) and (4) from Definition \ref{tra} together with the condition that all $T^x(y,z)$ are supported in $B(f(x),r)$ implies that only finitely many `coefficients' $T^x(y,z)$ are non-zero.  It follows that $\C[X,\mathcal{A}]_O$ is an algebraic direct limit of $*$-algebras of the form
\begin{multline*}
M_n(\{\{b_x\}_{x\in{}A}\in\Pi_{x\in{}A}C_0(B(f(x),r)),\Cliff_\C(W_n(f(x))))~|~\{b_x\} \textnormal{ satisfies (2), (5)} \\\textnormal{from Definition \ref{tra} uniformly}\}).
\end{multline*}
The unitisation of each of these is inverse closed inside its norm closure in the unitisation of
\begin{displaymath}
M_n(\Pi_{x\in A}(C_0(B(f(x),r)),\Cliff_\C(W_n(f(x))))),
\end{displaymath}
however, whence Lemma \ref{invclo} implies that its maximal norm is the same as that induced from the representation on $E_X$.  Hence the same is true for $\C[X,\mathcal{A}]_O$.
\end{proof}

The following lemma is a special case of \cite[Lemma 6.3]{Yu:200ve}.

\begin{pastelem}\label{pastelem}
Say $X$ is decomposed as a disjoint union $X=A_1\sqcup...\sqcup A_n$.  Let $r>0$ and set
\begin{displaymath}
O_{r,j}=\bigcup_{x\in{}A_j}B(f(x),r).
\end{displaymath}
Then for any $r_0>0$ and any $k=1,...,n-1$ one has that
\begin{displaymath}
\lim_{r<r_0,r\to r_0}\C[X,\mathcal{A}]_{\cup_{j=1}^kO_{r,j}}+\lim_{r<r_0,r\to r_0}\C[X,\mathcal{A}]_{O_{r,k+1}}=\lim_{r<r_0,r\to r_0}\C[X,\mathcal{A}]_{\cup_{j=1}^{k+1}O_{r,j}}
\end{displaymath}
and
\begin{displaymath}
\lim_{r<r_0,r\to r_0}\C[X,\mathcal{A}]_{\cup_{j=1}^kO_{r,j}}\cap\lim_{r<r_0,r\to r_0}\C[X,\mathcal{A}]_{O_{r,k+1}}=\lim_{r<r_0,r\to r_0}\C[X,\mathcal{A}]_{(\cup_{i=1}^kO_{r,j})\cap O_{r,k+1}}.
\end{displaymath}
\end{pastelem}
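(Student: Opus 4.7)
The plan is to reduce both equalities to support manipulations plus a single cut-off argument. Since $\C[X,\mathcal{A}]_O$ is monotone in $O$, the direct limits in the statement are simply ascending unions over $r < r_0$. The easy directions---$\subseteq$ for the sum equation and both inclusions for the intersection equation---follow from this monotonicity together with the observations that $\supp(S+T) \subseteq \supp(S) \cup \supp(T)$ and that if an element lies in $\C[X,\mathcal{A}]_{O_1} \cap \C[X,\mathcal{A}]_{O_2}$ then its support sits in $O_1 \cap O_2$. Explicitly, for the intersection's $\subseteq$ direction: given $T \in \C[X,\mathcal{A}]_{\cup_{j \leq k} O_{r_1,j}} \cap \C[X,\mathcal{A}]_{O_{r_2,k+1}}$, set $r'' = \max(r_1,r_2) < r_0$; then $\supp(T) \subseteq (\cup_{j \leq k} O_{r'',j}) \cap O_{r'',k+1}$, so $T$ belongs to the right-hand side.

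The real content is the $\supseteq$ inclusion for the sum equation. Given $U \in \C[X,\mathcal{A}]_{\cup_{j \leq k+1} O_{r,j}}$ with $r < r_0$, I aim to split $U = S + T$ with $S \in \C[X,\mathcal{A}]_{\cup_{j \leq k} O_{r',j}}$ and $T \in \C[X,\mathcal{A}]_{O_{r',k+1}}$ for some $r < r' < r_0$. My plan is a cut-off: pick such an $r'$, and build a bounded multiplier $\phi$ of $\mathcal{A}(V)$ equal to $1$ on $\cup_{j \leq k} O_{r,j}$ and vanishing off $\cup_{j \leq k} O_{r',j}$. Since $C_0(\R_+ \times H)$ is a central subalgebra of $M(\mathcal{A}(V))$, I can set $S(x,y) := \phi \cdot U(x,y)$ and $T(x,y) := (1-\phi) \cdot U(x,y)$; then $S + T = U$, and the cut-off properties immediately yield $\supp(S) \subseteq \cup_{j \leq k} O_{r',j}$ and $\supp(T) \subseteq O_{r,k+1}$, as required.

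The main obstacle is choosing $\phi$ so that $S$ and $T$ actually remain inside $\C[X,\mathcal{A}]$. Conditions (1)--(4) of Definition \ref{tra} survive any bounded central multiplication, but the delicate requirements are (5), a uniform directional-derivative bound, and (6), representability as a finite linear combination of elementary tensors in $\mathcal{A}(W_N(x))$ after pulling back through $\beta_N(x)$. Following \cite[Lemma 6.3]{Yu:200ve}, I would build $\phi$ via the functional calculus already present inside $\mathcal{A}$: for each $y \in \cup_{j \leq k} A_j$ take $\phi_y := \chi((X \hat\otimes 1 + 1 \hat\otimes C_{f(y)})^2)$ with $\chi: \R_+ \to [0,1]$ a smooth bump equal to $1$ on $[0,r^2]$ and vanishing on $[r'^2,\infty)$, and assemble them via $\phi := 1 - \prod_y (1 - \phi_y)$. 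The crucial observation is that condition (4) bounds the radial support of each $U(x,\cdot)$, so at any point in $X \times X \times (\R_+ \times H)$ only finitely many factors $1 - \phi_y$ differ from $1$, and bounded geometry together with the coarse embedding $f$ bounds this number uniformly. This keeps $\phi$'s action on $U(x,y)$ within the image of some $\beta_M(x)$ and preserves the finite-tensor form needed for (6), while smoothness of $\chi$ delivers the uniform derivative bound (5).
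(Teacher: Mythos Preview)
Your proposal is correct and follows the same approach as the paper: the paper's own proof consists entirely of identifying the setup with that of \cite[Lemma 6.3]{Yu:200ve} and invoking that result, and what you have written is precisely a sketch of the cut-off argument underlying Yu's lemma. One minor point: your claim that conditions (1)--(4) ``survive any bounded central multiplication'' is slightly imprecise for condition (1), since one needs $\phi\cdot U(x,y)$ to land back in some $\beta_M(x)(\mathcal{A}(W_M(x)))$; but you correctly address this in the final paragraph when you observe that only a uniformly bounded finite number of the $\phi_y$ interact with any given entry $U(x,y)$, which forces the product into a finite-dimensional piece $W_M(x)$.
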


\begin{proof}
In the set up of \cite[Lemma 6.3]{Yu:200ve}, set $i_0=1$, and $X_{1,j}=A_j$, $X'_{1,j}=A_{k+1}$ for each $j=1,..,k$.  Then in the notation of \cite{Yu:200ve}, $O_r=\cup_{j=1}^kO_{r,j}$, $O'_r=O_{r,k+1}$, whence our lemma follows.
\end{proof}

The next lemma is essentially lemma 6.7 from \cite{Yu:200ve}.

\begin{geomlem}\label{geomlem}
For any $r>0$ there exists a disjoint partition $X=A_1\sqcup...\sqcup A_n$ of $X$ such that for any $x,y\in{}A_j$, if $x\neq{}y$, then $B(f(x),r)\cap B(f(y),r)=\emptyset$. \qed
\end{geomlem}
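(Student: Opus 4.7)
The plan is to reduce the statement to a standard graph-coloring argument, with the coarse embedding converting the geometric disjointness condition on $\R_+\times H$ into a metric separation condition on $X$. First I would observe that by the definition of $B(v,r)$, the balls $B(f(x),r)$ and $B(f(y),r)$ are disjoint precisely when $\|f(x)-f(y)\|_H\geq 2r$. The lower estimate $\rho_-(d(x,y))\leq\|f(x)-f(y)\|_H$ of the coarse embedding, together with $\rho_-(t)\to\infty$, yields an $R>0$ such that $d(x,y)>R$ forces $\|f(x)-f(y)\|_H\geq 2r$. Hence it suffices to produce a finite partition $X=A_1\sqcup\cdots\sqcup A_n$ with the property that any two distinct points of the same piece $A_j$ satisfy $d(x,y)>R$.

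Next I would construct such a partition by greedy coloring. Form the graph $\Gamma$ on vertex set $X$ whose edges join distinct points $x,y$ with $d(x,y)\leq R$. By bounded geometry, $|B_R(x)|\leq N_R$ for all $x$, so every vertex of $\Gamma$ has at most $N_R-1$ neighbours. Fix a well-ordering of $X$ and process its points in that order, at each step assigning to the current $x$ any color in $\{1,\dots,N_R\}$ that does not already appear on a previously-colored neighbour of $x$; since at most $N_R-1$ colors are forbidden, at least one remains. Setting $A_j$ to be the $j$-th color class gives a partition into $n\leq N_R$ pieces in which distinct points are never adjacent in $\Gamma$; by the first paragraph their associated balls are disjoint.

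There is essentially no obstacle here; the only subtlety is that $X$ need not be countable, but the greedy argument is insensitive to the cardinality of the well-ordering used (alternatively one could invoke de Bruijn–Erdős to reduce the problem to finite subgraphs, on each of which the greedy bound gives a proper $N_R$-coloring).
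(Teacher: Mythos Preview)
Your argument is correct. The paper does not give its own proof of this lemma; it simply records it as ``essentially lemma 6.7 from \cite{Yu:200ve}'' and places a \qed. Your direct route---translate the ball-disjointness in $\R_+\times H$ into the condition $\|f(x)-f(y)\|_H\geq 2r$, use the lower control $\rho_-$ of the coarse embedding to convert this into $d(x,y)>R$, and then invoke bounded geometry plus a greedy coloring to obtain the required $R$-separated partition---is exactly the standard argument behind Yu's lemma, so there is no meaningful divergence in approach. Your side remark on cardinality is harmless but unnecessary here: a uniformly discrete bounded geometry space is automatically countable (each ball is finite and $X=\bigcup_n B_n(x_0)$ for any basepoint $x_0$), so an ordinary enumeration suffices for the greedy step.
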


\begin{proof}[Proof of proposition \ref{mer}]
Let 
\begin{displaymath}
O_r=\bigcup_{x\in X}B(f(x),r)\subseteq\R_+\times V;
\end{displaymath}
as one has that 
\begin{displaymath}
\C[X,\mathcal{A}]=\lim_{r\to\infty}\C[X,\mathcal{A}]_{O_r},
\end{displaymath}
it suffices to show that for each $\C[X,\mathcal{A}]_{O_r}$, and in fact for each $*$-algebra
\begin{displaymath}
\lim_{r<r_0,r\to r_0}\C[X,\mathcal{A}]_{O_r}
\end{displaymath} 
that the $E_X$-norm and maximal norm are the same.

Using Lemma \ref{geomlem}, for each $r>0$
\begin{displaymath}
O_r=\bigcup_{j=1}^n\Big(\bigsqcup_{x\in A_j}B(f(x),r)\Big);
\end{displaymath}
let $O_{r,j}=\sqcup_{x\in{}A_j}B(f(x),r)$.  Lemma \ref{disj} implies that the maximal norm on each $\C[X,\mathcal{A}]_{O_{r,j}}$ is the same as the $E_X$-norm, whence the same is true for
\begin{displaymath}
\lim_{r<r_0,r\to r_0}\C[X,\mathcal{A}]_{O_{r,j}}.
\end{displaymath}
Lemma \ref{pastelem} then implies that for each $k=1,...,n-1$, 
\begin{displaymath}
\lim_{r<r_0,r\to{}r_0}\C[X,\mathcal{A}]_{\cup_{j=1}^{k+1}O_{r,j}}
\end{displaymath}
is the pushout of
\begin{displaymath}
\lim_{r<r_0,r\to{}r_0}\C[X,\mathcal{A}]_{\cup_{j=1}^{k}O_{r,j}} \textrm{ and } \lim_{r<r_0,r\to{}r_0}\C[X,\mathcal{A}]_{O_{r,k+1}} 
\end{displaymath}
over
\begin{displaymath}
\lim_{r<r_0,r\to{}r_0}\C[X,\mathcal{A}]_{(\cup_{j=1}^{k}O_{r,j})\cap O_{r,k+1}};
\end{displaymath}
these pushouts exist in the category of $*$-algebras as the latter algebra is an ideal in the former two.  It follows that the same is true on the $C^*$-algebraic level (for either maximal or $E_X$ norms); as the $C^*$-norm on a pushout is uniquely determined by the norms on the other three $C^*$-algebras in the diagram defining it, this completes the proof.
\end{proof}

\begin{nuclearity} \label{nuclearity}
The proof of Lemma \ref{disj} also shows that $C^*_u(X,\mathcal{A})_{O}$ is type I for $O$'s as in Lemma \ref{disj}. The same argument as above then implies that in fact $C^*_u(X,\mathcal{A})$ itself is type I \cite[Chapter 6]{Pedersen:1979zr}, whence in particular nuclear \cite[Section 2.7]{Brown:2008qy}.
\end{nuclearity}

\begin{proof}[Proof of the remark.]
We elaborate a bit on the first sentence of the remark. Using bounded geometry of $X$, for a fixed $n$ there are only finitely many (up to isomorphism) vector spaces $W_n(f(x))$. Consequently, there are only finitely many non-isomorphic algebras $\mathrm{Cliff}_\C(W_n(f(x)))$, each of which is either a matrix algebra or a direct sum of two matrix algebras. Hence the completion of the $*$-algebra in the display in the proof of Lemma \ref{disj} is isomorphic to a finite direct sum of matrix algebras over commutative algebras.  The fact that $C^*_u(X,\mathcal{A})$ is type I now follows from this, the proof of Proposition \ref{mer}, and the fact that the type I property is preserved under quotients, direct sums and limits.
\end{proof}

\section{Proof of Theorem \ref{main}}\label{mainproof}

Throughout the remainder of the paper, we will mainly be dealing with $K$-theoretic arguments.  As it is more convenient for what follows, we will use $K$-theory for graded $C^*$-algebras; see Appendix \ref{appendix1} for our conventions in this regard.  Also in this appendix, we record our precise conventions concerning composition of $E$-theory classes / asymptotic morphisms.  This is important, as many of the $C^*$-algebras we use are not separable (or even $\sigma$-unital), so some of the methods for composing asymptotic morphisms in the literature (for example, that from \cite{Connes:1990kx} and \cite[Appendix 2.B]{Connes:1994zh}) may fail. 

\subsection{The Bott and Dirac morphisms}\label{bdsec}

In \cite[Section 7]{Yu:200ve}, Yu defines Dirac and Bott morphisms, and shows that they define asymptotic morphisms
\begin{displaymath}
\alpha_t:C^*(P_d(X),\mathcal{A}) \leadsto C^*(P_d(X),K) ~~\textnormal{ and } ~~\beta_t:\mathcal{S}\hat{\otimes}C^*(P_d(X))\leadsto C^*(P_d(X),\mathcal{A})
\end{displaymath}
respectively.  These are constructed on the algebraic level by applying the `local' versions $\theta^n_t(x)$ and $\beta(x)$ described in Appendix \ref{appendix2} to matrix entries, plus a rescaling in the case of $\beta$. Namely, 
\begin{equation}\label{dirac}
\alpha_t(T)(x,y)=(\theta^N_t(x))(T_1(x,y)),
\end{equation}
where $N$ is such that for all $x,y\in X$ there exists $T_1(x,y)\in\mathcal{A}(W_N(x))$ such that $\beta_N(T_1(x,y))=T(x,y)$; and 
\begin{equation}\label{bott}
\beta_t(g\hat\otimes T)(x,y)=(\beta(x))(g_t)\cdot T(x,y).  
\end{equation}

Now, these formulas also make sense in our context, and we use them to define Dirac and Bott morphisms on the algebraic level precisely analogously.  However, our Dirac and Bott morphisms do not have the same domains and ranges as Yu's; precisely, ours will look like
\begin{displaymath}
\alpha_t:C_{u,\xx}^*(X,\mathcal{A}) \leadsto UC_{\xx}^{*,g}(X) \textnormal{ and } \beta_t:\mathcal{S}\hat{\otimes}C^*_{u,\xx}(X)\leadsto C^*_{u,\xx}(X,\mathcal{A})
\end{displaymath}
($UC^{*,g}_{\xx}(X)$ is defined below).  Our aim in this Subsection is to show that the formulas in lines (\ref{dirac}) and (\ref{bott}) above really do define asymptotic morphisms between the algebras claimed.  

We start by defining the range of the Dirac morphism.  Throughout this section $\mathcal{H}=\mathcal{H}_0\oplus \mathcal{H}_1$ denotes a graded Hilbert space with separable and infinite-dimensional even and odd parts $\mathcal{H}_0$, $\mathcal{H}_1$ respectively.  For later computations, it will be useful to make the particular choice of $\mathcal{H}$ described in Appendix \ref{appendix2}.

\begin{ucalgdef}\label{ucalgdef}
Let $\mathcal{K}(\mathcal{H}_0)$ denote the (trivially graded) copy of the compact operators on $\mathcal{H}_0$.  Define $U\C_0[X]$ to be the $*$-algebra of finite propagation $X$-by-$X$ matrices $(T(x,y))_{x,y\in X}$ such that there exists $N\in\N$ such that for all $x,y\in X$, $T(x,y)$ is an operator in $\mathcal{K}(\mathcal{H}_0)$ of rank at most $N$.  $U\C_0[X]$ is called the \emph{(trivially graded) $*$-algebra of uniform operators}.  Define $U\C[X]$ to be the $*$-algebra of $X$-by-$X$ matrices with entries in $\mathcal{K}(\mathcal{H}_0)$ such that for all $\epsilon>0$ there exists $T_0\in U\C_0[X]$ with $\|T_0(x,y)-T(x,y)\|<\epsilon$ for all $x,y\in X$. 

$U\C[X]$ is represented naturally by matrix multiplication on $\ell^2(X)\otimes \mathcal{H}_0$.  Its norm closure in the associated operator norm is called the \emph{(trivially graded) uniform algebra of $X$}, and denoted $UC^{*}(X)$.  The closure of $U\C[X]$ for the maximal norm is called the \emph{(trivially graded) maximal uniform algebra of $X$} and denoted $UC^{*}_{max}(X)$ (it is well-defined by an obvious analogue of Lemma \ref{fundlem}).

We define non-trivially graded versions $U\C_0^g[X]$, $U\C^g[X]$, $UC^{*,g}(X)$ and $UC^{*,g}_{max}(X)$ precisely analogously by using $\mathcal{K}(\mathcal{H})$ for the matrix entries, and decreeing that an element $T\in U\C^g[X]$ is even (odd) if and only if all its matrix entries are even (odd).
\end{ucalgdef}

The difference between $UC^{*,g}_{\xx}(X)$ and $UC^*_{\xx}(X)$ is not major: one has of course that there is an isomorphism
\begin{equation}\label{grademap}
\phi:UC^{*,g}_\xx(X)\to UC^*_\xx(X)\hat{\otimes}\textnormal{Cliff}_\C(\R^2)
\end{equation}
whence the following lemma, which we record for later use.

\begin{gradelem}\label{gradelem}
An isomorphism $\phi$ as in line (\ref{grademap}) above induces an isomorphism
\begin{displaymath}
\phi_*:K_*(UC^{*,g}_\xx(X))\to K_*(UC^*_\xx(X)).
\end{displaymath}
Moreover, if $\iota:UC^*_\xx(X)\to UC^{*,g}_\xx(X)$ is the inclusion induced by the inclusion $\mathcal{H}_0\hookrightarrow\mathcal{H}$, then $\phi_*$ is the inverse to $\iota_*$.  In particular, $\phi_*$ does not depend on the choice of $\phi$.  \qed
\end{gradelem}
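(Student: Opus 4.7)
The plan is to decompose $\phi_*$ as the composition of $\phi$ with the standard identification $K_*(A \hat{\otimes} \mathrm{Cliff}_\C(\R^2)) \cong K_*(A)$, then identify $\phi \circ \iota$ with the canonical inclusion $\jmath: UC^*_\xx(X) \to UC^*_\xx(X) \hat{\otimes} \mathrm{Cliff}_\C(\R^2)$ sending $a \mapsto a \hat{\otimes} 1$. First I would recall the basic fact from graded $K$-theory (cf.\ Appendix \ref{appendix1}) that $\jmath$ induces an isomorphism on $K$-theory for any graded $C^*$-algebra $A$; this is a consequence of Bott periodicity for graded $K$-theory, or equivalently of the fact that $\mathrm{Cliff}_\C(\R^2)$ is (graded) Morita equivalent to $\C$. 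Call this isomorphism $\jmath_*$, and define $\phi_* := \jmath_*^{-1} \circ (\phi)_*$; this is manifestly an isomorphism since both factors are.

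Next I would examine how the isomorphism $\phi$ in (\ref{grademap}) arises concretely. The graded Hilbert space $\mathcal{H} = \mathcal{H}_0 \oplus \mathcal{H}_1$ factors (non-canonically) as $\mathcal{H} \cong \mathcal{H}_0 \hat{\otimes} \C^{1,1}$, where $\C^{1,1}$ is the standard graded two-dimensional Hilbert space, under which $\mathcal{K}(\mathcal{H}) \cong \mathcal{K}(\mathcal{H}_0) \hat{\otimes} \mathrm{Cliff}_\C(\R^2)$ as graded $C^*$-algebras. Applying this identification entry-wise to $X$-by-$X$ matrices yields $\phi$, and by construction $\mathcal{K}(\mathcal{H}_0) \hookrightarrow \mathcal{K}(\mathcal{H})$ corresponds to $T \mapsto T \hat{\otimes} 1$. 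Hence for this choice of $\phi$, the composition $\phi \circ \iota: UC^*_\xx(X) \to UC^*_\xx(X) \hat{\otimes} \mathrm{Cliff}_\C(\R^2)$ is literally equal to $\jmath$.

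Consequently,
\begin{displaymath}
\phi_* \circ \iota_* = \jmath_*^{-1} \circ (\phi \circ \iota)_* = \jmath_*^{-1} \circ \jmath_* = \mathrm{id}_{K_*(UC^*_\xx(X))}.
\end{displaymath}
Since $\phi_*$ is already known to be an isomorphism, this forces $\iota_* = \phi_*^{-1}$, hence $\phi_* = \iota_*^{-1}$. Finally, independence of the choice of $\phi$ is automatic: the map $\iota_*$ does not involve $\phi$ at all, and $\phi_* = \iota_*^{-1}$.

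I expect the main subtlety to be the verification that the concrete $\phi$ coming from a decomposition $\mathcal{H} \cong \mathcal{H}_0 \hat{\otimes} \C^{1,1}$ genuinely carries $\iota$ to $\jmath$; once that identification is in place, the rest is formal. A secondary point worth addressing is that different choices of decomposition of $\mathcal{H}$ differ by a (graded) unitary, which gives an inner automorphism inducing the identity on $K$-theory, providing a second route to the independence statement if one wishes to avoid the shortcut via $\iota_* = \phi_*^{-1}$.
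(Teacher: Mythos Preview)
The paper gives no proof of this lemma (it is marked \qed\ immediately after the statement), so there is nothing to compare against; your write-up is supplying the details the authors left to the reader.

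Your overall strategy is sound, but there is one concrete slip. You assert that under the identification $\mathcal{H}\cong\mathcal{H}_0\hat{\otimes}\C^{1,1}$ the inclusion $\mathcal{K}(\mathcal{H}_0)\hookrightarrow\mathcal{K}(\mathcal{H})$ coming from $\mathcal{H}_0\hookrightarrow\mathcal{H}=\mathcal{H}_0\oplus\mathcal{H}_1$ corresponds to $T\mapsto T\hat{\otimes}1$. It does not: the inclusion $\xi\mapsto(\xi,0)$ becomes $\xi\mapsto\xi\otimes e_1$ under $\mathcal{H}_0\oplus\mathcal{H}_1\cong\mathcal{H}_0\otimes\C^{1,1}$, so on compacts one gets the \emph{corner embedding} $T\mapsto T\hat{\otimes}e_{11}$, where $e_{11}$ is the (even) rank-one projection onto the even summand of $\C^{1,1}$. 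Thus $\phi\circ\iota$ is not your $\jmath$.

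The repair is painless and your argument survives. In the graded $K$-theory of Appendix~\ref{appendix1} one has $K_*(A)=\llbracket\,\cdot\,,A\hat{\otimes}\mathcal{K}(\mathcal{H})\rrbracket$, and the identification $K_*(A)\cong K_*(A\hat{\otimes}\Cliff_\C(\R^2))$ is just the absorption $\Cliff_\C(\R^2)\hat{\otimes}\mathcal{K}(\mathcal{H})\cong\mathcal{K}(\mathcal{H})$. After tensoring with $\mathcal{K}(\mathcal{H})$ and applying this absorption, the corner embedding $a\mapsto a\hat{\otimes}e_{11}$ becomes $a\hat{\otimes}k\mapsto a\hat{\otimes}\psi(k)$ for a graded $*$-endomorphism $\psi$ of $\mathcal{K}(\mathcal{H})$; any such $\psi$ is implemented by a graded isometry of $\mathcal{H}$ and is therefore homotopic to the identity. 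Hence $a\mapsto a\hat{\otimes}e_{11}$ induces precisely the stability isomorphism, and your computation $\phi_*\circ\iota_*=\id$ goes through with $e_{11}$ in place of $1$. (Your alternative remark at the end, that different choices of $\phi$ differ by a graded inner automorphism and hence agree on $K$-theory, is correct and gives an independent check.)
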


\begin{ucalgrem}
The distinction between $U\C_0[X]$ and $U\C[X]$ is also not major: the two have the same representation theory, using the fact that
\begin{displaymath}
\{f\in\ell^\infty(X,\mathcal{K})~|~\text{ there exists }n\in\N \text{ such that for all $x\in X$,}\text{ rank}(f(x))\leq n\}
\end{displaymath}
is isospectrally included inside its norm closure in $\ell^\infty(X,\mathcal{K})$, the argument of Lemma \ref{fundlem}, and Lemma \ref{invclo}.  The two are convenient for slightly different purposes however, so we include both.  Similar comments apply in the graded case.
\end{ucalgrem}

\begin{ucalglem}\label{ucim}
Let $T\in \C[X,\mathcal{A}]$ be such that there exists $N$ such that for all $x,y\in X$ there exists $T_1(x,y)\in\mathcal{A}(W_N(x))$ such that $\beta_N(T_1(x,y))=T(x,y)$.  Then for each $t\in[1,\infty)$, the $X$-by-$X$ matrix with $(x,y)^{\rm th}$ entry
\begin{displaymath}
\theta^N_t(x)(T(x,y))
\end{displaymath}
is an element of $U\C^g[X]$.
\end{ucalglem}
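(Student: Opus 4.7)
The plan is to verify the three defining properties of membership in $U\C^g[X]$ from Definition \ref{ucalgdef} for the matrix $S$ with entries $S(x,y) := \theta^N_t(x)(T(x,y))$, namely finite propagation, matrix entries in $\mathcal{K}(\mathcal{H})$ consistent with the grading, and uniform sup-norm approximability by matrices from $U\C_0^g[X]$.

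The finite propagation and grading conditions both pass formally from $T$ to $S$, since $\theta^N_t(x)$ is a graded $*$-homomorphism sending zero to zero; the propagation bound is $r_1$ from Definition \ref{tra}(3). To see that each $S(x,y)$ lies in $\mathcal{K}(\mathcal{H})$, recall that $\theta^N_t(x)$ maps $\mathcal{A}(W_N(x))$ into compact operators on a Hilbert space $\mathcal{H}_{W_N(x)}$ built from $W_N(x)$ and its Clifford algebra. Bounded geometry forces $\dim W_N(x) \leq N_{N^2}$, so there are only finitely many isomorphism classes of $\mathcal{H}_{W_N(x)}$ as $x$ varies over $X$; the particular $\mathcal{H}$ specified in Appendix \ref{appendix2} is chosen precisely so that each $\mathcal{H}_{W_N(x)}$ embeds as a graded subspace of $\mathcal{H}$, thereby realising $S(x,y)$ inside $\mathcal{K}(\mathcal{H})$.

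The main content is the uniform finite-rank approximation. Unpacking $\theta^N_t(x)$, an elementary tensor $g \hat\otimes h \in \mathcal{S}\hat\otimes\mathcal{C}(W_N(x))$ is sent (essentially) to $g\bigl((C + D_{W_N(x)})/t\bigr) M_h$, where $D_{W_N(x)}$ is a Dirac-type operator on $\mathcal{H}_{W_N(x)}$ whose eigenvalue growth is controlled by $\dim W_N(x) \leq N_{N^2}$, and $M_h$ is multiplication by $h$. Given $\epsilon > 0$, the uniform support bound (Definition \ref{tra}(4)) lets us replace $g$ and $h$ by $\epsilon$-close, compactly supported approximants of uniform support size; then only a uniformly bounded number of eigenvalues of $(C+D_{W_N(x)})/t$ contribute, producing a finite-rank operator of rank at most some $n_\epsilon$ depending only on $t$, $N$, $r_2$, and $N_{N^2}$, and hence uniform in $(x,y)$. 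The main obstacle is that Definition \ref{tra}(6) only guarantees a \emph{pointwise} finite decomposition of each $T_1(x,y)$ as a sum of elementary tensors, without a uniform bound on the number of summands; overcoming this requires a preliminary uniform approximation step that uses the bounds in conditions (2), (4), (5) to reduce to a family with a common bound on the number of summands and a common support size, after which the Dirac-operator analysis above applies uniformly across all $(x,y)$.
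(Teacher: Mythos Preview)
Your outline identifies the right obstacle --- the lack of a uniform bound on the number of elementary tensors in condition (6) --- and correctly suggests that conditions (2), (4), (5) should supply the missing uniformity. But the ``preliminary uniform approximation step'' you invoke is precisely the heart of the argument, and you only gesture at it; moreover, your proposed execution via eigenvalue counting for the Dirac-type operator is harder than necessary and not obviously uniform in $x$ as stated (the operators $B_{N,t}(x)$ genuinely depend on $x$, not just on $\dim W_N(x)$). Also note that condition (4) bounds the support of $T_1(x,y)$ as a whole, not of the individual tensor factors $g$ and $h$, so you cannot directly pass to compactly supported $g,h$ from it.

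The paper's proof bypasses all spectral analysis. It fixes a single reference point $x_0$ with $\dim W_N(x_0)$ maximal and considers the set $S(x_0)\subseteq\mathcal{A}(W_N(x_0))$ of all elements obeying the uniform bounds from conditions (2), (4), (5). Arzel\`a--Ascoli (using the derivative bound (5) for equicontinuity and (2), (4) for uniform boundedness on a compact set) shows $S(x_0)$ is precompact; hence its continuous image $\theta^N_t(x_0)(S(x_0))\subseteq\mathcal{K}(\mathcal{H})$ is precompact, and any precompact set of compact operators is uniformly approximable by operators of a fixed finite rank. The passage from $x_0$ to arbitrary $x$ is handled not by a finitely-many-isomorphism-classes argument but by an equivariance property: for any linear isometry $A:H\to H$ carrying $W_N(x)$ into $W_N(x_0)$ one has $\theta^N_t(x_0)(A\cdot h)=A\cdot\theta^N_t(x)(h)$, so every $\theta^N_t(x)(T_1(x,y))$ is a unitary conjugate of something in $\theta^N_t(x_0)(S(x_0))$, and unitary conjugation preserves rank. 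This single move replaces both your eigenvalue analysis and your isomorphism-class bookkeeping.
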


\begin{proof}
Note that $(\theta^N_t(x)(T(x,y)))_{x,y\in X}$ is a finite propagation matrix with uniformly bounded entries in $\mathcal{K}(\mathcal{H})$; all we need show therefore is that the entries can be approximated by elements of $\mathcal{K}(\mathcal{H})$ of uniformly finite rank. 

Let $M,r_1,r_2,c$ be constants with respect to which $T$ satisfies the conditions in Definition \ref{tra} parts $2$, $3$, $4$, $5$ respectively.  Let $x_0\in X$ be such that $W_N(x_0)$ is of maximal dimension (such exists as $X$ has bounded geometry).  

Define $S(x_0)$ to be the subset of $h\in\mathcal{A}(W_N(x))$ such that: 
\begin{itemize}
\item $h$ can be written as a finite sum of elementary tensors of elements from $\mathcal{S}$ and $\mathcal{C}(W_N(x_0))$;
\item $\|h\|\leq M$;
\item support$(h)$ is contained in the ball of radius $r_2$ about $(0,f(x_0))$;
\item for all $Y\in\R\times W_n(x_0)$ of norm less than one, $\|D_y h\|\leq c$
\end{itemize}
(in other words, $S(x_0)$ consists of elements of $\mathcal{A}(W_N(x))$ satisfying all of the conditions on the $T(x,y)$).  Note that $S(x_0)$ is precompact by Arzela-Ascoli, whence its image under the continuous map $\theta^N_t(x_0)$,
\begin{displaymath}
\theta^N_t(x_0)(S(x_0))\subseteq \mathcal{K}(\mathcal{H}),
\end{displaymath}
is also precompact.  In particular, for any $\epsilon>0$ there exists $P>0$ such that any $K\in \theta^N_t(x_0)(S(x_0))$ can be approximated within $\epsilon$ by a rank $P$ operator.

Note finally that the group of linear isometries of $H$ acts on both $\mathcal{A}(V)$ and $\mathcal{K}(\mathcal{H})$ by $*$-au\-to\-mor\-phisms and that the maps $\theta^N_t(x)$ are `equivariant' in the sense that if $A:H\to H$ is a linear isometry taking $W_N(x)$ into $W_N(x_0)$ then $\theta_t^N(x_0)(A(h))=A(\theta_t^N(x)(h))$ for all $h\in\mathcal{A}(W_N(x))$.  In particular, all images of all the elements $T(x,y)\in\mathcal{A}(W_n(x))$ for our original $T$ are contained in unitary conjugates of $\theta^N_t(x_0)(S(x_0))$; by the previous paragraph, this proves the lemma.
\end{proof}

As it simplifies certain arguments, we define a dense $*$-subalgebra of $\mathcal{S}$ by setting
\begin{equation}\label{s0}
\mathcal{S}_0=\{f\in \mathcal{S}~|~f \text{ is compactly supported and continuously differentiable}\}.
\end{equation}

One now needs to show the following claims:
\begin{ashomo}\label{ashomo}
The families of maps 
\begin{displaymath}
\alpha_t:\C[X,\mathcal{A}]\leadsto U\C[X] \textnormal{ and } \beta_t:\mathcal{S}_0\hat\otimes_{alg}\C_u[X]\leadsto \C[X,\mathcal{A}]
\end{displaymath}
defined by the formulas in lines (\ref{dirac}) and (\ref{bott}) above are asymptotically well-defined\footnote{i.e.\ the choices made in the definitions do not affect the resulting maps $\alpha:\C[X,\mathcal{A}]\to \mathfrak{A}(U\C[X])$ and $\beta:\mathcal{S}\hat\otimes\C_u[X] \to \mathfrak{A}(\C[X,\mathcal{A}])$, and the images of each map are where we claim they are.}, and define asymptotic families. 
\end{ashomo}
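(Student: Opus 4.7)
The overall strategy is to reduce each assertion in the claim to an entrywise statement about the matrix coefficients, at which point it becomes a pointwise statement about the local Dirac map $\theta_t^N(x)$ and the local Bott map $\beta(x)$, both of which are established to be (asymptotic) $*$-homomorphisms in \cite{Higson:1999be} and \cite{Yu:200ve}. Two structural features of $X$ — uniform discreteness and bounded geometry — allow the entrywise assertions to be combined coherently: for any $T,S$ of finite propagation, the sum $(TS)(x,y)=\sum_z T(x,z)S(z,y)$ contains a uniformly bounded number of nonzero terms, so entrywise estimates that are uniform in a single matrix entry translate directly into operator-norm estimates on products.

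For $\alpha_t$, the containment of the image in $U\C^g[X]$ is precisely Lemma \ref{ucim}. Independence from the choice of $N$ (and of the lift $T_1(x,y)$ with $\beta_N(x)(T_1(x,y))=T(x,y)$, which is in fact unique since each $\beta_{ba}$ is injective) amounts to the standard compatibility $\theta_t^{N'}(x)\circ\beta_{N'N}(x)=\theta_t^N(x)$ at the heart of the Higson--Kasparov--Trout construction, applied entrywise. Continuity of $t\mapsto\alpha_t(T)$, asymptotic additivity, and preservation of the $*$-operation all lift from the entrywise level. For asymptotic multiplicativity, I would fix $T,S$ of finite propagation, pick a single $N$ large enough to handle all the finitely many (per row/column) pairs $(x,z),(z,y)$ arising in $\alpha_t(TS)(x,y)$ and $(\alpha_t(T)\alpha_t(S))(x,y)$, and then invoke the asymptotic multiplicativity of $\theta_t^N(x)$ together with the bounded-geometry bound on the number of nonzero terms in the sum.

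For $\beta_t$, the principal task is to verify that $\beta_t(g\hat\otimes T)$ actually lies in $\C[X,\mathcal{A}]$, i.e.\ satisfies conditions (1)--(6) of Definition \ref{tra}. Condition (1) follows by choosing $N$ large enough so that $W_N(x)$ contains $f(y)$ for every $y$ in the propagation support of $T$, making both $\beta(x)(g_t)$ and the embedded image of $T(x,y)$ lie in $\mathcal{A}(W_N(x))$; condition (3) is immediate since $\beta_t$ preserves propagation. Contractivity of the Bott map together with the uniform entry bound on $T$ yields (2); conditions (4) and (5) follow from the explicit formula $\beta(x)(g_t)=g_t(X\hat\otimes 1+1\hat\otimes C_{f(x)})$, the compact support and $C^1$ regularity of $g\in\mathcal{S}_0$, and the familiar rescaling estimates for $g_t(s)=g(s/t)$ that underlie Yu's Bott construction; condition (6) is clear, since $\beta_t(g\hat\otimes T)(x,y)$ is given explicitly as a finite sum of elementary tensors in $\mathcal{A}(W_N(x))$. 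Continuity in $t$, asymptotic additivity, and the $*$-condition are then immediate entrywise.

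The main obstacle is the asymptotic multiplicativity of $\beta_t$: one must show that $\beta(x)(g_t)\,\beta(z)(h_t)$ is asymptotically equal to $\beta(x)((gh)_t)$, uniformly over pairs $(x,z)$ arising in the matrix product of the coefficients. Here the coarse-embedding hypothesis enters: since the relevant $(x,z)$ satisfy $d(x,z)$ bounded by the propagation of the coefficient matrices, $\|f(x)-f(z)\|\le\rho_+(d(x,z))$ is uniformly bounded, and the rescaling of $g_t$ together with the $\mathcal{S}_0$ regularity of $g,h$ makes the two local Bott maps asymptotically equal; this is the direct translation of the corresponding estimate in \cite[Section 7]{Yu:200ve}, and is the one genuinely nontrivial analytic ingredient. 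Once this is in hand, everything else in the claim assembles mechanically from the entrywise verifications.
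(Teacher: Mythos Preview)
Your overall architecture matches the paper's: reduce to entrywise estimates, invoke the local (asymptotic) $*$-homomorphism properties of $\theta_t^N(x)$ and $\beta(x)$, and use bounded geometry together with Lemma~\ref{fundlem} to pass to operator norms. The treatment of $\beta_t$ is fine and essentially identical to the paper's; in particular you correctly isolate the one nontrivial estimate $\|\beta(x)(g_t)-\beta(z)(g_t)\|\to 0$ uniformly on entourages, which the paper proves via a resolvent argument using that $C_x-C_z$ is a bounded multiplier of norm $\le \rho_+(d(x,z))$.

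There is, however, a genuine omission in your argument for the asymptotic multiplicativity of $\alpha_t$. You write that you will ``invoke the asymptotic multiplicativity of $\theta_t^N(x)$'' to compare $\alpha_t(TS)(x,y)$ with $(\alpha_t(T)\alpha_t(S))(x,y)$. But the latter is
\[
\sum_z \theta_t^N(x)\bigl(T_1(x,z)\bigr)\cdot \theta_t^N(z)\bigl(S_1(z,y)\bigr),
\]
with the second factor based at $z$, not at $x$; the operators $B_{N,t}(x)$ and $B_{N,t}(z)$ differ because the Clifford multipliers are centred at $f(x)$ and $f(z)$ respectively. So even if each $\theta_t^N(x)$ were exactly multiplicative, you could not conclude. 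The missing ingredient is precisely the uniform estimate $\|\theta_t^N(x)-\theta_t^N(z)\|\to 0$ over $(x,z)$ in a fixed entourage (this is what the paper imports from \cite[Proof of Lemma 7.2]{Yu:200ve}); only after replacing $\theta_t^N(z)$ by $\theta_t^N(x)$ does the asymptotic multiplicativity of a \emph{single} $\theta_t^N(x)$ (from \cite[Lemma 7.5]{Yu:200ve}) finish the job. A smaller point in the same spirit: the ``standard compatibility $\theta_t^{N'}(x)\circ\beta_{N'N}(x)=\theta_t^N(x)$'' you quote is not an equality but only an asymptotic one (cf.\ \cite[proof of Proposition 4.2]{Higson:1999be}), which is why the paper says the choice of $N$ ``asymptotically does not matter'' rather than ``does not matter''.
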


\begin{completions}\label{completions}
Both of the families of maps in Claim \ref{ashomo} above extend to the $C^*$-algebra completions, for both $red\to red$ and $max\to max$ versions.
\end{completions}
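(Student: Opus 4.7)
The plan is to handle the maximal and reduced extensions separately. The common engine is Claim \ref{ashomo}: the given formulas assemble into $*$-homomorphisms $\alpha:\C[X,\mathcal{A}]\to\mathfrak{A}(U\C[X])$ and $\beta:\mathcal{S}_0\hat\otimes_{alg}\C_u[X]\to\mathfrak{A}(\C[X,\mathcal{A}])$ into the asymptotic $*$-algebras, and so, composing with the quotients to the asymptotic $C^*$-algebras $\mathfrak{A}(B_\xx):=C_b([1,\infty),B_\xx)/C_0([1,\infty),B_\xx)$, induce $*$-homomorphisms $\alpha:\C[X,\mathcal{A}]\to\mathfrak{A}(UC^{*,g}_\xx(X))$ and $\beta:\mathcal{S}_0\hat\otimes_{alg}\C_u[X]\to\mathfrak{A}(C^*_{u,\xx}(X,\mathcal{A}))$ for either completion $\xx$.

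For the maximal extensions, the universal property of the maximal $C^*$-norm does all of the work: any $*$-homomorphism from $\C[X,\mathcal{A}]$ (resp.\ $\mathcal{S}_0\hat\otimes_{alg}\C_u[X]$) into a $C^*$-algebra is automatically bounded by the maximal norm and extends uniquely to $C^*_{u,max}(X,\mathcal{A})$ (resp.\ $\mathcal{S}\hat\otimes C^*_{u,max}(X)$, passing from $\mathcal{S}_0$ to $\mathcal{S}$ via continuity of the functional calculus defining $\beta(x)(g_t)$). Applied to $\alpha$ and $\beta$ this yields the desired max-to-max asymptotic families.

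For the reduced extensions, the key observation is that both formulas act entry-wise and preserve propagation: $\alpha_t$ maps $\C^S[X,\mathcal{A}]$ into an analogous propagation-$S$ subalgebra of $U\C^g[X]$, and similarly for $\beta_t$. Combining Lemma \ref{fundlem} (and its obvious analogue for the uniform algebra) with the contractivity of $\theta^N_t(x)$ and $\beta(x)$ gives, for $T$ of propagation at most $S$,
\begin{displaymath}
\|\alpha_t(T)\|_{UC^{*,g}(X)} \leq C_S\sup_{x,y}\|T(x,y)\|_{\mathcal{A}(V)} \leq C_S\|T\|_{C^*_u(X,\mathcal{A})},
\end{displaymath}
where the last inequality follows by extracting matrix coefficients against the basis vectors $[x]\in E_X$; an analogous bound handles $\beta_t$. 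The main obstacle is that $C_S$ depends on $S$, so these estimates only give continuity stratum by stratum. To remove this dependence, my plan is to exhibit the $*$-homomorphism $\alpha:\C[X,\mathcal{A}]\to\mathfrak{A}(UC^{*,g}(X))$ as implemented by a spatial model, namely a family of asymptotic representations of $\mathcal{A}(V)$ on $\mathcal{H}$ that, when amplified along the Hilbert module structure of $E_X$, act on $\ell^2(X,\mathcal{H})$ and cover the defining representation of $C^*_u(X,\mathcal{A})$ on $E_X$. Automatic $C^*$-contractivity of $*$-homomorphisms then yields the required bound by the reduced norm, completing the extension. The argument for $\beta_t$ is analogous, with $\ell^2(X)$ replacing $E_X$.
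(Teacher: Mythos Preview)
Your treatment of the maximal case is essentially the paper's argument: a $*$-homomorphism from the algebraic left-hand side into a $C^*$-algebra extends by universality. The paper does pause on one point you gloss over, namely identifying the maximal completion of $\mathcal{S}_0\hat\otimes_{alg}\C_u[X]$ with $\mathcal{S}\hat\otimes C^*_{u,max}(X)$; this needs nuclearity of $\mathcal{S}$ and an observation that any $*$-representation of the algebraic tensor product restricts to one of $\C_u[X]$, not just continuity of the functional calculus.

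The reduced case is where your proposal has a real gap. For $\alpha$ you correctly observe that the entrywise Lemma~\ref{fundlem} bound only gives control stratum by stratum with a constant $C_S$ depending on propagation, and then you propose to fix this via a ``spatial model'': asymptotic representations of $\mathcal{A}(V)$ on $\mathcal{H}$ amplified to $\ell^2(X,\mathcal{H})$ so as to cover the $E_X$-representation. But the maps $\theta^N_t(x)$ are not $*$-homomorphisms (only asymptotically so), they vary with the row index $x$, and there is no evident isometry $E_X\to\ell^2(X,\mathcal{H})$ intertwining the two actions; so automatic contractivity of $*$-homomorphisms does not obviously apply. As stated, this is a plan rather than an argument. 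The paper sidesteps the issue entirely: since Proposition~\ref{mer} gives $C^*_{u,max}(X,\mathcal{A})=C^*_u(X,\mathcal{A})$, the already-established $max\to max$ extension of $\alpha$ \emph{is} the $red\to red$ extension. You have this proposition available and should use it.

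For $\beta$ in the reduced case, ``analogous'' is not enough: the paper (following \cite[Lemma~7.6]{Yu:200ve}) proves the concrete multiplicative estimate $\|\beta_t(g\hat\otimes T)\|\leq\|g\|\,\|T\|$ in the $E_X$-norm, extends $\beta_t$ to $\mathcal{S}\hat\otimes_{alg}C^*_u(X)$, and then passes to the (maximal, hence spatial by nuclearity of $\mathcal{S}$) tensor product. Your propagation-based bound with $C_S$ does not yield this, and the spatial-model sketch is again too vague to carry the weight.
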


\begin{proof}[Proof of Claim \ref{ashomo}]
As the proofs are essentially the same as those in \cite{Yu:200ve}, we only give a relatively short summary.

We first check that the formulas defining $\alpha_t$, $\beta_t$ make sense, and do not depend on any of the choices involved.  Note that all of the unbounded operators involved in the definition of the various $\theta$s and $\beta$s we need are essentially self-adjoint by the results of \cite{Higson:1999be}, hence the functional calculi needed make sense. Furthermore, there is a choice of $N$ involved in the definition of $\alpha_t$. As pointed out in \cite[page 230]{Yu:200ve}, this choice asymptotically does not matter, by \cite[proof of Proposition 4.2]{Higson:1999be}. \footnote{For the reader's convenience (it will not be used in what follows), we note the difference between the approach from \cite{Yu:200ve} (using the operators $B_{n,t}$) and the one in \cite{Higson:1999be} (using direct limits). In the latter, the morphisms analogous to our $\theta_t^N(x)$s are defined to land in $\mathcal{S}\hat\otimes \K(\mathcal{H}_{W_N(x)})$ (and not in $\K(\mathcal{H})$ as here) and the direct limit of $\mathcal{S}\hat\otimes \K(\mathcal{H}_{W_N(x)})$s that one needs to take is not isomorphic to $\mathcal{S}\hat\otimes \K(\mathcal{H})$ (cf.\ \cite[Remark on p.18]{Higson:1999be}).}

Next, we check that the ranges of $\alpha_t$, $\beta_t$ are in $U\C^g[X]$, $\C[X,\mathcal{A}]$ respectively.  We did this for $\alpha_t$ in Lemma \ref{ucim}.
Considering $\beta_t$, note that because of the conditions on $\mathcal{S}_0$ and $\C_u[X]$, the image of some $g\hat\otimes T\in\mathcal{S}_0\hat\otimes \C_u[X]$ under $\beta_t$ satisfies the conditions of Definition \ref{tra}; this essentially follows from the fact that all of the $\beta(x)(g_t)$s are translates of each other\footnote{The idea here is the same as that in Lemma \ref{ucim}, as one might expect, but a little simpler.}.\\

We now show that $\alpha_t$, $\beta_t$ define asymptotic families. It is argued in \cite[Proof of Lemma 7.2]{Yu:200ve} that given $R\geq0$ and the parameters of an element $T\in \C[X,\mathcal{A}]$, the norm--error resulting from switching $x$ to $y$ 
in the formula for $\alpha_t(T)$ (i.e.\ $\|\theta_t^N(x)-\theta_t^{N}(y)\|$) tends $0$ as $t\to\infty$ uniformly over $(x,y)$ in any fixed entourage. Furthermore, by \cite[Lemma 7.5]{Yu:200ve}, $\|(\theta_t^N(x))(ab)-(\theta_t^N(x))(a)\cdot(\theta_t^N(x))(b)\|$ tends to $0$ uniformly in $x\in X$.   Note finally that using bounded geometry of $X$, given two finite propagation matrices, there is a uniform bound on the number of summations and multiplications involved in each entry of their product. These facts, together with (as usual) Lemma \ref{fundlem}, yield that the $\alpha_t$ define an asymptotic family. 

To show that $\beta_t$ defines an asymptotic family, we argue as in the previous paragraph. In this case, each $\beta_t(x)$ is a $*$-homomorphism, so we only need to show that the norm $\|\beta(x)(g_t)-\beta(y)(g_t)\|$ goes to $0$ uniformly on each entourage of the form $\{(x,y)\in X\times X\mid d(x,y)\leq R\}$ for fixed $g\in\mathcal{S}_0$ and $R\geq 0$. First, we reduce to the finite-dimensional situation. Let $W$ be a finite-dimensional subspace of $V$, which contains $0$, $f(x)$ and $f(y)$. Then $\beta(x)=\beta_{V,W}\circ\beta_{W,x}$ and $\beta(y)=\beta_{V,W}\circ\beta_{W,y}$, where $\beta_{V,W}:\mathcal{A}(W)\to\mathcal{A}(V)$ is the $*$-homomorphism associated to the inclusion $W\subset V$ and $\beta_{W,x},\beta_{W,y}:\mathcal{S}\to\mathcal{A}(W)$ are associated respectively to the inclusion of the zero-dimensional affine subspaces $\{f(x)\}$ and $\{f(y)\}$ to $W$. Thus it suffices to deal with $\beta_{W,x}$ and $\beta_{W,y}$.

Denote by $C_x$ and $C_y$ the Clifford multipliers used to define $\beta_{W,x}$ and $\beta_{W,y}$, that is, $C_x(v)=v+f(x)$ and $C_y(v)=v+f(y)$ as functions on $W$. The main point to observe is that $C_x-C_y$ is a bounded multiplier of $\mathcal{C}(W)$, namely by the constant function $v\mapsto f(x)-f(y)$. In particular, its norm is at most some $R'$, which exists, and depends only on $R$, as $f$ is a coarse embedding. The remainder of the proof is a standard argument: We first prove an estimate for $\|\beta_{W,x}(g_t)-\beta_{W,y}(g_t)\|$ when $g(s)=\frac1{s+i}$\footnote{This function is not in $\mathcal{S}_0$, of course, but this is not important.}. Denote $G_x=X\hat\otimes 1+1\hat\otimes C_x$ and $G_y=X\hat\otimes 1+1\hat\otimes C_y$. Then
\begin{align*}
  \|\beta_{W,x}(g_t)-\beta_{W,y}(g_t)\|&=\left\|\frac1{\frac1tG_x+i}-\frac1{\frac1tG_y+i}\right\|\leq\\
  &\leq \left\|\frac{\sqrt{t}}{G_x+ti}\right\|\|G_x-G_y\|\left\|\frac{\sqrt{t}}{G_y+ti}\right\|\leq
  \frac1{\sqrt{t}}\cdot R'\cdot \frac1{\sqrt{t}}=\frac{R'}{t},
\end{align*}
since $|\frac{\sqrt{t}}{s+ti}|\leq\frac1{\sqrt{t}}$ for all $s\in\R$. The same argument also works for $g(s)=\frac1{s-i}$. However, by the Stone--Weierstrass theorem, the algebra generated by these two functions is dense in $\mathcal{S}$, so we are done.
\end{proof}

\begin{proof}[Proof of Claim \ref{completions}]
For the $max\to max$ versions, we just observe that by the previous Claim, we have a $*$-homomorphism from the algebraic version of the corresponding left-hand side into the asymptotic algebra of the maximal completion of the right-hand side. Hence by the universality of the maximal completion, we are essentially done.  The only potential problem occurs in the case of $\beta$, and is that the algebraic version on the left-hand side is in fact $\mathcal{S}_0\hat\otimes_{\rm alg}\C_u[X]$; we thus need to see that the maximal completion of this algebra is $\mathcal{S}\hat\otimes C^*_{u,max}(X)$.  This follows from the fact that we can `restrict' any $*$-representation of $\mathcal{S}\hat\otimes_{\rm alg}\C_u[X]$ to a $*$-representation of $\C_u[X]$ (see the proof of \cite[Theorem 3.2.6]{Brown:2008qy}), and of course nuclearity of $\mathcal{S}$.

We can argue the $red\to red$ extension of $\alpha$ by the same argument (except that we complete the right-hand side in the reduced norm), plus Proposition \ref{mer}. The existence of the $red\to red$ extension of $\beta$ is proved in \cite[Lemma 7.6]{Yu:200ve}. We briefly summarise that argument, which also goes through in our case.

First it is proved that $\|\beta_t(g\hat\otimes T)\|\leq\|g\|\|T\|$ for all $g\in \mathcal{S}_0$, $T\in \C_u[X]$. Thanks to this, one can extend $\beta_t$ continuously to elements of the type $g\hat\otimes T$, where $g\in\mathcal{S}$ and $T\in C^*_u(X)$. Next, extend $\beta_t$ by linearity to $\mathcal{S}\hat\otimes_{alg} C^*_u(X)$.  The whole asymptotic family $(\beta_t)_{t\in[1,\infty)}$ can at this stage be considered as a $*$-representation of the (graded) algebraic tensor product of two $C^*$-algebras into the asymptotic algebra of $C^*_u(X,\mathcal{A})$, whence it extends to the maximal tensor product of them. The argument is finished by employing nuclearity of $\mathcal{S}$.
\end{proof}

\subsection{Morita equivalence of uniform algebras and uniform Roe algebras}\label{moritasec}

In this subsection, we build a Morita equivalence between $C^*_{u,\xx}(X)$ and $UC^*_{\xx}(X)$, and show that its inverse on the level of $K$-theory is given by a certain $*$-homomorphism. This is a general result which appears to be of some interest in its own right.  

\begin{morita}\label{prop:morita-eq}
The $C^*$-algebras $UC^*_{\xx}(X)$ and $C^*_{u,\xx}(X)$ are strongly Morita equivalent.
\end{morita}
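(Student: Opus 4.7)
The plan is to realize $C^*_{u,\xx}(X)$ as a full corner in $UC^*_\xx(X)$ via a fixed rank-one projection and then invoke the standard Morita equivalence theorem for full projections in $C^*$-algebras. Choose a unit vector $\xi\in\mathcal{H}_0$, set $p_\xi=|\xi\rangle\langle\xi|$, and let $P\in U\C_0[X]$ denote the diagonal $X$-by-$X$ matrix with $p_\xi$ in each diagonal entry; note $P$ has propagation zero and uniform rank one.  Direct computation gives $(PTP)(x,y)=\langle T(x,y)\xi,\xi\rangle p_\xi$ for every $T\in U\C[X]$, so algebraically $PU\C[X]P$ is the image of the injective $*$-homomorphism $\iota:\C_u[X]\to U\C[X]$ defined by $\iota(S)(x,y)=S(x,y)p_\xi$.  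The problem thus splits into two key steps: showing $\iota$ extends to an isomorphism $C^*_{u,\xx}(X)\cong PUC^*_\xx(X)P$ on completions, and showing $P$ is full in $UC^*_\xx(X)$.

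To see that $P$ is full, it suffices by density of $U\C_0[X]$ in $UC^*_\xx(X)$ to put every $T\in U\C_0[X]$ into the algebraic two-sided ideal generated by $P$.  Applying the partial-isometry decomposition from Lemma \ref{fundlem} to $T$ (with matrix-valued coefficients) writes $T$ as a finite sum of matrices supported on graphs of partial bijections $\phi:X\to X$, i.e.\ of the form $T(x,y)=K(x)\delta_{y=\phi(x)}$ with $K(x)\in\mathcal{K}(\mathcal{H}_0)$ of uniformly bounded rank.  Pointwise singular value decomposition then further reduces to rank-one entries $K(x)=|\alpha(x)\rangle\langle\beta(x)|$, for which a direct calculation (exploiting $|\alpha\rangle\langle\xi|\cdot p_\xi\cdot|\xi\rangle\langle\beta|=|\alpha\rangle\langle\beta|$) exhibits $T=APB$ with $A,B\in U\C_0[X]$ built explicitly from $|\alpha(x)\rangle\langle\xi|$, $|\xi\rangle\langle\beta(y)|$ and the partial-bijection data.

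The corner identification is immediate in the reduced case because $\iota$ is operator-norm isometric ($p_\xi$ has rank one).  For the maximal case I would construct, for any $*$-representation $\pi_0:\C_u[X]\to\mathcal{B}(H_0)$, an extension $\tilde\pi_0:U\C[X]\to\mathcal{B}(H_0\otimes\mathcal{H}_0)$ satisfying $\tilde\pi_0(\iota(S))=\pi_0(S)\otimes p_\xi$; this will force $\|\iota(S)\|_{UC^*_{max}}\geq\|\pi_0(S)\|$ and hence $\bar\iota$ to be isometric.  Writing $T\in U\C_0^S[X]$ via the matrix-valued analogue of Lemma \ref{fundlem} as $T=\sum_{i}D_iv_i$ with $v_i\in\C_u^S[X]$ scalar partial isometries and $D_i$ diagonal with $\mathcal{K}(\mathcal{H}_0)$-valued entries, set $\tilde\pi_0(T)=\sum_i\bigl(\sum_{x\in X}\pi_0(\delta_x)\otimes D_i(x)\bigr)\bigl(\pi_0(v_i)\otimes 1_{\mathcal{H}_0}\bigr)$.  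A matrix-coefficient computation shows this is independent of the decomposition, restricts correctly to $\iota(\C_u[X])$, enjoys the uniform bound from Lemma \ref{fundlem}, and extends by continuity to a $*$-representation of all of $U\C[X]$.  Assembling, standard Morita theory for full projections then delivers the strong Morita equivalence via the imprimitivity bimodule $P\cdot UC^*_\xx(X)$.  The delicate technical point is verifying that the construction of $\tilde\pi_0$ in the maximal case is genuinely multiplicative and bounded on all of $U\C[X]$, not only on the dense subalgebra $U\C_0[X]$ where the decomposition is most transparent.
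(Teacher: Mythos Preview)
Your full-corner strategy is a natural alternative to the paper's explicit imprimitivity-bimodule construction, and your fullness argument and reduced-case identification are fine.  The genuine gap is in the maximal case, at the claim that $\tilde\pi_0(\iota(S))=\pi_0(S)\otimes p_\xi$.  For $f\in\ell^\infty(X)\subset\C_u[X]$ your formula gives
\[
\tilde\pi_0(\iota(f))=\Bigl(\sum_{x\in X}f(x)\,\pi_0(\delta_x)\Bigr)\otimes p_\xi,
\]
and for this to equal $\pi_0(f)\otimes p_\xi$ one needs $\pi_0|_{\ell^\infty(X)}$ to be \emph{normal}, i.e.\ $\pi_0(f)=\sum_xf(x)\pi_0(\delta_x)$ in the strong topology.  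This fails for many representations of $C^*_{u,max}(X)$: any representation arising from a boundary point of the Stone--\v{C}ech compactification $\beta X\setminus X$ (equivalently, from a non-principal orbit of the coarse groupoid) annihilates every $\delta_x$ while sending $1$ to $1$.  Thus your $\tilde\pi_0\circ\iota$ recovers only the ``discrete part'' of $\pi_0$, which can have strictly smaller norm, and the desired inequality $\|\iota(S)\|_{UC^*_{max}}\geq\|\pi_0(S)\|$ does not follow.  The delicate point is not the multiplicativity or boundedness of $\tilde\pi_0$ (those are fine); it is that $\tilde\pi_0$ simply cannot see the non-normal part of $\pi_0$.

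This is exactly the subtlety the paper confronts, and it handles it differently: it builds a pre-imprimitivity $U\C_0[X]$--$\C_u[X]$ bimodule $E_{alg}$ (finite-propagation $X$-by-$X$ matrices with entries in $\mathcal{H}_0$) and argues abstractly that completing $E_{alg}$ sets up an order-isomorphism between the lattices of $C^*$-norms on $\C_u[X]$ and on $U\C_0[X]$, via the conjugate module in the reverse direction; hence maximal must correspond to maximal.  In your language, $P\,UC^*_{max}(X)\,P$ is certainly \emph{some} $C^*$-completion of $\C_u[X]$; what still needs proof is that it is the maximal one, and for that you will need the paper's lattice-of-norms argument or something of equivalent strength.
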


Note that the result applies to the \emph{trivially} graded versions of the uniform algebra; in Section \ref{uniformcase} we will use Lemma \ref{gradelem} to relate this back to the non-trivially graded versions.

Let us remark that the $C^*$-algebras $UC^*_{\xx}(X)$ are not $\sigma$-unital if $X$ is infinite, so in this case strong Morita equivalence is not equivalent to stable isomorphism; and indeed, $UC^*_{\xx}(X)$ and $C^*_{u,\xx}(X)$ are not stably isomorphic.

\begin{proof}
Recall that part of the data used to construct $U\C_0[X]$ is a trivially graded Hilbert space $\mathcal{H}_0$.  We construct a pre-Hilbert module $E_{alg}$ over $\C_u[X]$ as follows.  Let $E_{alg}$ be the set of all finite propagation $X$-by-$X$ matrices with uniformly bounded entries in $\mathcal{H}_0$; in a departure from our usual convention, we write $\xi_{xy}$ for the $(x,y)^\textnormal{th}$ entry of an element of $E_{alg}$.  $E_{alg}$ is then a vector space, and we define a $\C_u[X]$-valued inner product $(\cdot|\cdot)$ on it by `multiply the transpose of the first matrix with the second matrix and take the $\mathcal{H}_0$-inner products where appropriate'. Formally, put  
\begin{displaymath}
\left((\xi_{xy})\mid(\eta_{xy})\right)_{zw}=\sum_y\langle   \xi_{yz}|\eta_{yw}\rangle.
\end{displaymath}
Using bounded geometry of $X$ and the Cauchy--Schwartz  inequality, the result is a complex matrix with finite propagation and uniformly bounded entries, hence an element of $\C_u[X]$. It is easy to check that $(\cdot|\cdot)$ is right-$\C_u[X]$-linear (note that $\langle\cdot|\cdot\rangle$ on $\mathcal{H}_0$ is assumed to be linear in the second variable). It is also positive-definite, since for any $\xi\in E_{alg}$ and fixed $x\in X$, the (effectively finite) matrix $\left(\xi_{xz}|\xi_{xw}\right)_{z,w\in X}$ is positive-definite by a standard argument, thus so is the locally finite sum $\sum_{x\in X}\left(\xi_{xz}|\xi_{xw}\right)_{z,w\in X}=(\xi|\xi)$. The fact that $(\xi|\xi)=0$ implies $\xi=0$ is now also clear.

Through the usual process of simultaneous completion of the $*$-algebra $\C_u[X]$ and pre-Hilbert module $E_{alg}$ \cite[pages 4-5]{Lance:1995ys}, we obtain Hilbert $C^*$-modules $E$ and $E_{max}$ over $C^*_u(X)$ and $C^*_{u,max}(X)$ respectively.

We next show that both $E$ and $E_{max}$ are full (that is, the closure of $(E_{\xx}|E_{\xx})$ is $C^*_{u,\xx}(X)$) by showing that $E_{alg}$ is `full', i.e.\ that $(E_{alg}|E_{alg})=\C_u[X]$. Any operator in $\C_u[X]$ can be written as a finite sum of   operators of the type $\zeta\cdot t$, where $\zeta\in \ell^\infty(X)$ and $t$ is a partial translation (we can even require the partial translations involved in the decomposition to be orthogonal), see e.g.\ \cite[Lemma 4.10]{Roe:2003rw}. Choosing any unit vector $v\in \mathcal{H}_0$, we define $\zeta v\in E_{alg}$ by $(\zeta v)_{yy}=\zeta(y)v$ and $(\zeta v)_{xy})=0$ if $x\not=y$. Similarly we put $tv\in E_{alg}$ to be the matrix of $t$, where we replace each one by $v$ and each zero by $0\in \mathcal{H}_0$. Now clearly $(\zeta v|tv)=\zeta\cdot t\in \C_u[X]$.

We now identify the `finite--rank' (in the sense of Hilbert $C^*$-module theory) operators  on $E_{alg}$ with $U\C_0[X]$. Denote by $\pi:U\C_0[X]\to\B(E_{alg})$ the $*$-homomorphism which is best described as `multiply the matrices, using the $\mathcal{K}(\mathcal{H}_0)$ module-structure on $\mathcal{H}_0$ to multiply entries'. The formula for $\pi(T)\xi$, where $T=(T(x,y))$ and $\xi=(\xi_{xy})$, is
$$
\big(\pi(T)(\xi)\big)_{xy} = \sum_{z}T(x,z)(\xi_{zy}).
$$
It is immediate that $\pi$ maps $U\C_0[X]$ into the finite rank operators on $E_{alg}$: any $T\in U\C_0[X]$ is a finite sum of operators with finite propagation and rank one entries; and each such acts as a `rank one' operator on $E$. Moreover, each `rank one' operator is in the image of $\pi$. We now show that $\pi$ is in fact injective on $U\C_0[X]$. Given a nonzero operator $T=(T(x,y))_{x,y\in X}\in U\C_0[X]$, there is a vector $\eta=(\eta_x)_{x\in X}\in \ell^2(X,\mathcal{H}_0)$, such that $T\eta\not=0$. Considering $\eta$ as a diagonal vector in $E_{alg}$, the diagonal of $(\pi(T)\eta|\pi(T)\eta)\in \C_u[X]$ is precisely $x\mapsto \|[\pi(T)\xi]_x\|^2$, hence non-zero. Altogether, we have shown that the $*$-algebra $U\C_0[X]$ is isomorphic to the $*$-algebra of `finite--rank' operators on $E_{alg}$. 

We now address the reduced case. Denote by $\lambda:\C_u[X]\to \B(\ell^2(X))$ the usual representation (so by completing $\C_u[X]$ and $E_{alg}$ in this norm we obtain a Hilbert module $E$ over $C^*_u(X)$).  Let $H'=E\otimes_\lambda\ell^2(X)$. The Hilbert space $H'$ carries a faithful representation of $\K(E)$; we will identify it with $UC^*(X)$. First, note that $H'\cong \ell^2(X,\mathcal{H}_0)$.  Indeed, any vector in $E_{alg}$ can be written as a sum of matrices whose support (in $X\times X$) is a partial translation. Moving this partial translation across $\otimes_\lambda$, it follows that any simple tensor $\xi\otimes\eta\in E_{alg}\otimes_\lambda \ell^2(X)$ can we written in a form where $\xi$ is supported on the diagonal in $X\times X$. Noting that we can also slide $\ell^\infty (X)$ functions over $\otimes_\lambda$, we conclude that $H'\cong \ell^2(X,\mathcal{H}_0)$. It is now a straightforward calculation to show that the elements of $U\C_0[X]$ act on $H'$ precisely as in the usual representation, whence $UC^*(X)\cong\K(E)$.

It remains to deal with the maximal case. The claim is that if we complete $E_{alg}$ to $E_{max}$ with the norm induced from $C^*_{u,max}(X)$, we get that $\pi$ extends to an isomorphism from $UC^*_{max}(X)$ to $\K(E_{max})$. This is a general argument; so to keep the notation simple, we shall assume that we are given two $*$-algebras $A$ and $B$, together with a right pre-Hilbert module $F$ over $A$, which is at the same time a left pre-Hilbert module over $B$, and is such that the inner products are compatible and full. This implies that the algebra of $A$--finite--rank operators on $F$ is isomorphic to $B$, and similarly for the left structure. In what follows, we shall always refer to the right module structures, and think of $B$ as being the finite--rank operators. Finally, we assume that both $A$ and $B$ have maximal $*$-representations.

We use the fact that choosing a $C^*$-norm on $A$ determines a Hilbert module completion $\overline F$ of $F$ (with the norm on $F$ given by the formula $\|\xi\|^2=\|(\xi|\xi)\|_A$) and hence a $C^*$-norm on $B$, sitting as a dense subalgebra in $\K(\overline F)$. Since the norm of elements $T$ of $B$ is determined by the formula $\|T\|^2=\sup_{\xi\in F,\|\xi\|\leq 1}\|(T\xi|T\xi)\|_A$, it is clear that the inequalities between norms on $A$ yield the same inequalities between the norms induced on $B$. Furthermore, we can induce also from $B$ to $A$ using the conjugate module $F^*$. Finally, the pre-Hilbert module $F^*\otimes_BF$ is isomorphic to the finite--rank operators on $F^*$ (as a pre-Hilbert bimodule over itself), which is in turn isomorphic to $A$; and similarly $F\otimes_AF^*\cong B$ as pre-Hilbert bimodules. Denoting the completions by bars, note that $F^*\otimes_BF$ is dense not only in $\overline{F^*\otimes_BF}_{\bar A}\cong \bar A_{\bar A}$, but also in $\overline{F^*}\otimes_{\bar B}\overline{F}_{\bar A}$ (where $\bar B\cong \K(\overline{F})$, which in turn determines the completion $\overline{F^*}$ of $F^*$). On elements $z\in F^*\otimes_BF$, however, both norms are determined by the same formula $(z|z)=(y|Xy)\in A$, where $z=\sum_{i=1}^kx_i\otimes y_i$, $y=(y_1,\dots,y_k)^t\in F^k$, $X\in M_k(B)$ is the positive matrix whose $(i,j)$-entry is $(x_i|x_j)$. Consequently, $\overline{F^*}\otimes_{\bar B}\overline{F}_{\bar A}\cong \bar A_{\bar A}$. It follows that the process described here of inducing norms from $A$ to $B$ and conversely is an isomorphism of lattices of $C^*$-norms on $A$ and $B$. Thus, the maximal norms correspond to each other, and we are done.
\end{proof}

Now, by the results of Exel \cite{Exel:1993pt}, a strong Morita equivalence bimodule $E$ induces a homomorphism $E_*$ on $K$-theory, which is in fact an isomorphism, without any assumptions on separability or $\sigma$-unitality; we thus have isomorphisms
\begin{displaymath}
E_*:K_*(UC^*_{\xx}(X))\cong K_*(C^*_{u,\xx}(X))
\end{displaymath}
(we should write `$(E_\xx)_*$' in the above, but prefer to keep the notation uncluttered).
The inverses to these isomorphisms can of course be described by the dual bimodules to $E$ and $E_{max}$; however, it will be useful to have a different description of the inverse, using the maps induced on $K$-theory by any one of a certain family of $*$-homomorphisms.  Our next step is to describe this family.

For each $x\in X$, choose a unit vector $\eta_x\in \mathcal{H}_0$. In a sense, we have chosen a unit vector in each `copy of $\mathcal{H}_0$' in $\ell^2(X,\mathcal{H}_0)$, thus designating a copy of $\ell^2(X)$ inside $\ell^2(X,\mathcal{H}_0)$. This induces an injective $*$-homomorphism $i_P: \C_u[X]\to U\C[X]$, explicitly defined below.

Define $e(x,y)=\theta_{\eta_x,\eta_y}\in\B(\mathcal{H}_0)$. The operators $E(x,y)\in\B(\ell^2(X,\mathcal{H}_0))$ that have $e(x,y)$ at $(x,y)^\textnormal{th}$-entry and $0$s elsewhere can be thought of as matrix units; and for $T=(T(x,y))\in \C_u[X]$, $[i_P(T)](x,y)=T(x,y)E(x,y)$. Furthermore, for $x\in X$, let $P_x:=\theta_{\eta_x,\eta_x}\in \B(\mathcal{H}_0)$ be the rank-one projection onto $\Span\{\eta_x\}$. Denote by $P=\diag(P_x)\in\ell^\infty(X,\mathcal{K})$ the `diagonal operator', with $P_x$s on the diagonal at the $(x,x)$-entries. Then $i_P(1)=P$.

Note that by lemma \ref{fundlem}, $i_P$ extends to give $*$-homomorphisms
$$
i_P:C^*_{u,\xx}(X)\to UC^*_{\xx}(X).
$$

\begin{kmap}\label{kmap}
For any choice of $i_P$ as described above, $i_{P*}:K_*(C^*_{u,\xx}(X))\to K_*(UC^*_\xx(X))$ is the inverse isomorphism to $E_*$ (in the sense of Exel \cite{Exel:1993pt}). 
\end{kmap}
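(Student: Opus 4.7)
My plan is to recognise $i_P$ as (isomorphic to) the inclusion of a full corner into $UC^*_{\xx}(X)$, and then invoke the standard principle that for a full projection $p$ in a $C^*$-algebra $A$, the inclusion $pAp \hookrightarrow A$ induces a $K$-theory isomorphism that is inverse to the Morita equivalence coming from the $(A,pAp)$-bimodule $Ap$.

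First I would observe that $P = \diag(P_x)$ lies in $U\C_0[X] \subseteq UC^*_{\xx}(X)$ (it has zero propagation and rank-one entries), and is a projection. A direct matrix calculation shows that the image of $i_P$ is exactly the corner $P\,UC^*_{\xx}(X)\,P$: an element $S$ satisfies $PSP=S$ iff every $S(x,y)$ has the form $\lambda(x,y)\theta_{\eta_x,\eta_y}$, and then $S = i_P((\lambda(x,y)))$. Combined with injectivity of $i_P$ (clear from the formula), one gets a $*$-isomorphism $i_P: C^*_{u,\xx}(X) \xrightarrow{\cong} P\,UC^*_{\xx}(X)\,P$. Next I would check that $P$ is full: any $T\in U\C_0[X]$ with rank-one entries $T(x,y)=|v(x,y)\rangle\langle w(x,y)|$ can be written as $A\cdot P \cdot B$ with $A(x,x)=|v(x,y)\rangle\langle\eta_x|$ and $B(y,y)=|\eta_y\rangle\langle w(x,y)|$ on the appropriate support (organised into finitely many partial-translation pieces using bounded geometry), and $U\C_0[X]$ is dense in $UC^*_{\xx}(X)$ by definition; so the closed two-sided ideal generated by $P$ is all of $UC^*_{\xx}(X)$, in both the reduced and maximal versions.

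Now I would identify the Morita bimodule $E = E_\xx$ constructed in Proposition \ref{prop:morita-eq} with $UC^*_{\xx}(X)\cdot P$ (regarded as a $(UC^*_{\xx}(X),\,C^*_{u,\xx}(X))$-bimodule via $i_P$). On the algebraic level this is the map $E_{alg}\to U\C[X]\cdot P$ sending a matrix $(\xi_{xy})$ to the matrix with $(x,y)$-entry $|\xi_{xy}\rangle\langle\eta_y|$. One checks routinely that this intertwines the left $UC^*$-actions, the right actions (using $i_P$), and the inner products — a short computation gives $a^*b = i_P((\xi|\eta))$ for $a,b$ the images of $\xi,\eta$. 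Completing in either the reduced or maximal norm then extends this to an isomorphism of imprimitivity bimodules, for both the red$\to$red and max$\to$max cases (using Proposition \ref{mer} / the uniqueness of norms from the proof of Proposition \ref{prop:morita-eq}).

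Having done this, the Morita equivalence $E_\xx$ factors as
\[
K_*(UC^*_{\xx}(X)) \xrightarrow{\,\text{compression by }P\,} K_*(P\,UC^*_{\xx}(X)\,P) \xrightarrow{(i_P)_*^{-1}} K_*(C^*_{u,\xx}(X)),
\]
and by the general principle for full corner projections — which, importantly, requires no $\sigma$-unitality and follows directly from the Exel framework that was already invoked to define $E_*$ — the compression is inverse to the inclusion-induced map $K_*(P\,UC^*_{\xx}(X)\,P)\to K_*(UC^*_{\xx}(X))$. Composing with the isomorphism induced by $i_P$ yields $i_{P*} = E_*^{-1}$. I expect the main obstacle to be the verification that this general principle (corner-inclusion inverts the imprimitivity bimodule on $K$-theory) applies in our non-$\sigma$-unital setting; this is essentially a bookkeeping exercise within Exel's construction, but needs to be stated carefully, and the compatibility between the red and max versions has to be checked in parallel.
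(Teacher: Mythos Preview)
Your proposal is correct and, at its core, the same as the paper's argument: both recognise that $i_P$ realises $C^*_{u,\xx}(X)$ as the corner $P\,UC^*_\xx(X)\,P$ and that the Morita bimodule $E_\xx$ is the corner bimodule $UC^*_\xx(X)\,P$. The paper, however, does not explicitly isolate these two structural facts and then invoke a general ``full corner inverts the imprimitivity map'' principle. Instead it works directly inside Exel's Fredholm-operator picture: it represents a class $[Q]-[S]\in K_0(C^*_{u,\xx}(X))$ by projections, pushes forward along $i_P$, tensors with $E$, and then explicitly computes the Hilbert-module isomorphism $i_P^{(n)}(T)\,UC^*_\xx(X)^{\oplus n}\otimes_\pi E_\xx\cong T\,C^*_{u,\xx}(X)^{\oplus n}$ (passing through $C^*_{u,\xx}(X)\otimes_{\pi\circ i_P}E_\xx\cong\pi(P)E_\xx\cong C^*_{u,\xx}(X)$), thereby verifying $E_*\circ i_{P*}=\id$ by hand. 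Your packaging is cleaner and more conceptual; the paper's has the advantage of being self-contained in the non-$\sigma$-unital setting, since the ``general principle'' you invoke is precisely what its explicit computation establishes. One small point: your reference to Proposition~\ref{mer} when extending the bimodule isomorphism to completions is misplaced---what you actually need is the lattice-of-norms argument at the end of the proof of Proposition~\ref{prop:morita-eq}, which you also cite.
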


\begin{proof}
Recall from \cite{Exel:1993pt} the way in which a Morita equivalence $A-B$-bimodule $E$ induces a map $K_0(A)\to K_0(B)$.  First, it is proved that one may represent elements of $K_0(A)$ as Fredholm operators $F$ (in the appropriate sense) between two (right) Hilbert $A$-modules $M$, $N$ \cite[Proposition 3.14 and Corollary 3.17]{Exel:1993pt}; modulo some technicalities, the kernel and the cokernel of $F$ are finitely generated projective $A$-modules, and their difference, that is the `index' of $F$, is the $K_0$ element. Second, the map $E_*$ is then induced by mapping $F:M\to N$ to $F\otimes 1: M\otimes_A E\to N\otimes_A E$, which determines an element of $K_0(B)$. This map is an isomorphism \cite[Theorem 5.3]{Exel:1993pt}. The $K_1$-case is treated via suspensions.

We first deal with the reduced case.  Our module $E$ induces an isomorphism $E_*:K_*(UC^*(X))\to K_*(C^*_u(X))$ as described in the previous paragraph. To prove that $i_{P*}$ is an inverse to this map, it suffices to show that $E_*\circ i_{P*}:K_0(C^*_u(X))\to K_0(C^*_u(X))$ is the identity map. Since $C^*_u(X)$ is unital, any $K_0$-class is represented as $[Q]-[S]$, where $Q,S\in M_n(C^*_u(X))$ are projections. Furthermore, we can assume that $SQ=0$, just by taking $S$ of the form $\diag(0,\mathbb{I}_k)$ (and possibly also enlarging $n$). For the `projective module' description of $K_0$, we can view this class as a difference of two finitely generated projective $C^*_u(X)$-modules $[Q(C^*_u(X)^{\oplus n})]-[S(C^*_u(X)^{\oplus n})]$; or in Exel's description, we may use these two modules and the zero operator between them (cf.\ \cite[below Proposition 3.13]{Exel:1993pt}). Using the proof of \cite[Proposition 3.14]{Exel:1993pt}, $i_{P*}([Q]-[S])$ can be represented as the zero Fredholm operator between $i_P(Q)U^{\oplus n}$ and $i_P(S)U^{\oplus n}$.

Note that  we have an isomorphism of Hilbert modules 
\begin{displaymath}
C^*_u(X)\otimes_{i_P}UC^*(X)\cong PUC^*(X),
\end{displaymath}
defined on simple tensors by $a\otimes b\mapsto i_P(a)b=Pi_P(a)b$. Under this isomorphism, adjointable operators of the form $T\otimes 1$, $T\in C^*_u(X)$, correspond to $Pi_P(T)$. Similarly, we have the matrix version of this: 
$$
P^{\oplus n}UC^*(X)^{\oplus n}\cong (C^*_u(X))^{\oplus n}\otimes_{i_P}UC^*(X),
$$
with $P^{\oplus n}i_P^{(n)}(T)$ corresponding to $T\otimes 1$ for $T\in M_n(C^*_u(X))$.

Via the isomorphism $\pi:UC^*(X)\to \K(E)$ described in the proof of Proposition \ref{prop:morita-eq}, we shall think of $UC^*(X)$ as acting on $E$ on the left by `matrix multiplication, where instead of multiplying the individual entries, we apply the operators to the vectors'.

For notational simplicity, we shall assume that for every $x\in X$, $P_x=P_0\in \B(\mathcal{H}_0)$, a fixed rank-one projection onto a subspace spanned by $\eta_0\in \mathcal{H}_0$ (this case is also all we will actually use). We remark that all the choices are unitarily equivalent on $\ell^2(X,\mathcal{H}_0)$ by a unitary which normalizes $UC^*(X)$.

Note that $C^*_u(X)\otimes_{\pi\circ i_P}E\cong \pi(P)E$, since given a simple tensor $a\otimes b\in C^*_u(X)\otimes_{\pi\circ i_P}E$, we may rewrite it as 
$$
a\otimes b=1\otimes (\pi\circ i_P)(a)b=1\otimes \pi(P)\pi(i_P(a))b
$$ 
(recall that $i_P(1)=P$). We now argue that $\pi(P)E\cong C^*_u(X)$ as Hilbert $C^*_u(X)$-modules.  Given $\xi=(\xi_{xy})\in E$, denote $\lambda(x,y)=\langle \eta_0,\xi_{xy}\rangle\in\C$ and write 
$$
(\pi(P)\xi)_{zw}=\left( \diag(P_0)(\xi_{xy}))\right)_{zw}=P_0\xi_{zw}=\eta_0\langle \eta_0,\xi_{zw}\rangle=\lambda_{zw}\eta_0.
$$ 
The required isomorphism can now be described as $\pi(P)E\ni \pi(P)\xi\mapsto (\lambda(x,y))\in C^*_u(X)$. Consequently, we see that also $C^*_u(Y)^{\oplus n}\otimes_{\pi\circ i_P}E \cong C^*_u(X)^{\oplus n}$ as Hilbert modules.

Under the isomorphism $C^*_u(X)\otimes_{\pi\circ i_P}E\cong C^*_u(X)$, it is easy to compute that the operators of the form $T\otimes 1\in\B(C^*_u(X)\otimes_{\pi\circ i_P}E)$, where $T\in C^*_u(X)$ acts by left multiplication, correspond to $T\in\B(C^*_u(X))\cong C^*_u(X)$. A similar correspondence works for $T\in\B(C^*_u(X)^{\oplus n})\cong M_n(C^*_u(X))$.

Putting the pieces together, for any projection $T\in M_n(C^*_u(X))$, remembering that $P^{\oplus n}=i_P^{(n)}(\mathbb{I}_n)$, we have
\begin{align*}
 \left(i_P^{(n)}(T)U^{\oplus n}\right)\otimes_\pi E &\cong \left((T\otimes1)(C^*_u(X)^{\oplus n}\otimes_{i_P}U)\right)\otimes_\pi E\\
 &\cong (T\otimes 1)(C^*_u(X)^{\oplus n}\otimes_{\pi\circ i_P}E)\\
 &\cong T(C^*_u(X)^{\oplus n}).
\end{align*}
Applying this to $T=Q$ and $T=S$, and noting that $0\otimes 1=0$ for the Fredholm operator, we arrive at the conclusion that the homomorphism $E_*$ is a left inverse to $i_{P*}$, thus finishing the proof in the reduced case.

The proof in the maximal case is essentially the same: the only point we need to argue slightly differently is the isomorphism $C^*_{u,max}(X)\otimes_{\pi\circ i_P} E_{max}\cong C^*_{u,max}(X)$. First, note that $C^*_{u,max}(X)\otimes_{\pi\circ i_P}E_{max}\cong \pi(P)E_{max}$, now thinking of $P\in U\C_0[X]$. Next, for showing that $\pi(P)E_{alg}\cong \C_u[X]$ as right $C^*_{u,max}(X)$-pre-Hilbert-modules we can use the same proof as for the analogous isomorphism in the reduced case. But completing this in the maximal norm yields the desired isomorphism $\pi(P)E_{max}\cong C^*_{u,max}(X)$. The rest of the proof carries over, and we are done.
\end{proof}

\subsection{$K$-theory computations}\label{uniformcase}

In this subsection we complete the proof of Theorem \ref{main} by computing the compositions 
\begin{displaymath}
\alpha_*\circ\beta_*:K_*(C^*_{u,\xx}(X))\to K_*(UC^{*,g}_{\xx}(X));
\end{displaymath}
we show that they are isomorphisms, in a sense inverse to the isomorphisms $E_{*}$ from Section \ref{moritasec}.  As we explain below, this is enough to complete the proof of Theorem \ref{main}. 

Let us introduce some notation. In accordance with \cite[Section 1.3]{Higson:2004la}, define a (`counit') $*$-ho\-mo\-mor\-phism $\eta:\mathcal{S}\to\C$ by $\eta(g)=g(0)$. Let 
$$
\iota:UC^*_{\xx}(X) \to UC^{*,g}_{\xx}(X)
$$
be induced by the canonical inclusion $\mathcal{H}_0\hookrightarrow\mathcal{H}$ as in Lemma \ref{gradelem}.
Let $P\in \K(\mathcal{H}_0)$ be the projection onto $\Span\{e^{-\|v\|^2}\}$, and define a $*$-homomorphism $i_P:\C_u[X]\to U\C[X]$ by $i_P(T)(x,y)=T(x,y)\cdot P$, which extends to give $*$-homomorphisms $i_P:C^*_{u,\xx}(X)\to UC^*_{\xx}(X)$.

\begin{composition}\label{composition}
The composition of asymptotic morphisms 
$$
\alpha_*\circ\beta_*:\mathcal{S}\hat\otimes C^*_{u,\xx}(X) \to UC^{*,g}_{\xx}(X)
$$ 
is homotopic to the $*$-homomorphism 
$$
\iota\circ(\eta\hat\otimes i_P):\mathcal{S}\hat\otimes C^*_u(X)\to UC^{*,g}(X).
$$ 
\end{composition}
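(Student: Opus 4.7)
The plan is to reduce the statement to a pointwise local computation of the Bott--Dirac composition, carried out by Higson--Kasparov--Trout in the finite-dimensional setting, and then to glue the resulting homotopy together across the matrix indices in a uniform way. Since both $\alpha_*\circ\beta_*$ and $\iota\circ(\eta\hat\otimes i_P)$ are continuous on the respective completions and agree on the dense $*$-subalgebra $\mathcal{S}_0\hat\otimes_{\rm alg}\C_u[X]$ will be enough, it suffices by Lemma \ref{fundlem} (and its obvious analogue for $UC^{*,g}_\xx(X)$) to construct an asymptotic homotopy on elementary tensors $g\hat\otimes T$ with $g\in\mathcal{S}_0$ and $T\in \C_u[X]$, whose parameters (propagation, matrix-norm bound, etc.) stay uniform in the homotopy variable.

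First, I would unfold the composition entry-by-entry. For $g\hat\otimes T$ as above and each pair $x,y\in X$, the entry of $\beta_t(g\hat\otimes T)$ sits in $\beta_N(x)(\mathcal{A}(W_N(x)))$ for any $N$ large enough that $f(x)\in W_N(x)$, since $\beta(x)(g_t)$ is built from $g_t$ and Clifford multiplication by $v\mapsto v-f(x)$ on any finite-dimensional affine subspace of $V$ through $f(x)$. Consequently, the $(x,y)$-entry of $(\alpha_t\circ\beta_t)(g\hat\otimes T)$ equals
\[
\theta^N_t(x)\bigl(\beta_{W_N(x),f(x)}(g_t)\bigr)\cdot T(x,y)\;+\;\varepsilon_t(x,y),
\]
where $T(x,y)\in\C$ acts by scalar multiplication and $\varepsilon_t(x,y)\to0$ in operator norm as $t\to\infty$, uniformly in $(x,y)$ on any fixed entourage (this is the ``$N$ is asymptotically irrelevant'' observation from the proof of Claim \ref{ashomo}, plus the translation estimate for $\theta^N_t$ from \cite[Lemma 7.2]{Yu:200ve}).

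Second, I would invoke the key pointwise identity from \cite{Higson:1999be} (made explicit in \cite[\S 1.3]{Higson:2004la}): inside $\mathcal{S}\hat\otimes \mathcal{K}(\mathcal{H}_{W_N(x)})\subseteq \mathcal{K}(\mathcal{H})$, the composition $\theta^N_t(x)\circ\beta_{W_N(x),f(x)}$ applied to $g_t$ is, for every fixed $t$, equal to $g(0)\,p_x$ plus a term that tends to zero as $t\to\infty$; here $p_x\in\mathcal{K}(\mathcal{H})$ is the rank-one projection onto the harmonic-oscillator ground state centred at $f(x)$, a translate of the projection onto $e^{-\|v\|^2}$. Thus, up to an asymptotically vanishing error, the composition reduces entry-wise to $g(0)\,p_x\cdot T(x,y)$. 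After passing through the identification in Lemma \ref{gradelem} (or directly, since each $p_x$ is even and of rank one in the graded $\mathcal{K}(\mathcal{H})$), this matrix is exactly $\iota\bigl(\eta(g)\,i_{P_\bullet}(T)\bigr)$, where $P_\bullet$ is the choice of rank-one projections $\{p_x\}_{x\in X}$.

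Third, I would connect this back to the specific $*$-homomorphism in the statement. Because all rank-one projections in $\mathcal{K}(\mathcal{H}_0)$ are unitarily equivalent and the unitary can be chosen from $\ell^\infty(X,\mathcal{U}(\mathcal{H}_0))$ (which lies in the multiplier algebra of $UC^{*,g}_\xx(X)$), the two $*$-homomorphisms $\iota\circ(\eta\hat\otimes i_{P_\bullet})$ and $\iota\circ(\eta\hat\otimes i_P)$ are conjugate by a norm-continuous path of unitaries, hence homotopic. Concatenating this with the asymptotic homotopy produced in the previous step by linearly rescaling the parameter $t$ (i.e.\ the standard $t\mapsto(1-s)t+s\infty$ trick, exploiting that asymptotic families are unique up to homotopy) yields the desired homotopy of asymptotic morphisms from $\alpha\circ\beta$ to $\iota\circ(\eta\hat\otimes i_P)$.

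The main obstacle I anticipate is uniformity in $x\in X$. The pointwise Higson--Kasparov--Trout estimate is for each fixed finite-dimensional affine subspace, and to paste these estimates into a \emph{single} asymptotic homotopy between full matrices in $C^*_{u,\xx}(X,\mathcal{A})$ and $UC^{*,g}_\xx(X)$ I will need the estimates to be uniform over the (bounded geometry) collection of subspaces $\{W_N(x)\}_{x\in X}$ and the translates $\beta_{W_N(x),f(x)}$. This uniformity is the same sort of compactness-plus-equivariance argument used in Lemma \ref{ucim}: the set of elements of the local $\mathcal{A}(W_N(x_0))$ that one has to push through $\theta^N_t(x_0)$ is precompact (Arzel\`a--Ascoli), and every other $x$ reduces to $x_0$ by a linear isometry. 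Combined with Lemma \ref{fundlem} to convert entrywise norm convergence into operator-norm convergence on $UC^{*,g}_\xx(X)$, this gives the homotopy in both the reduced and the maximal completions simultaneously.
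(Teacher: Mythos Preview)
Your outline captures the right intuition --- reduce to the local Higson--Kasparov--Trout computation and glue --- but there are two genuine gaps that the paper's argument is specifically designed to close.

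\textbf{Naive composition.} You work throughout with ``the $(x,y)$-entry of $(\alpha_t\circ\beta_t)(g\hat\otimes T)$'' as though the family $\{\alpha_t\circ\beta_t\}$ represents the $E$-theory product $\alpha\circ\beta$. In the non-separable framework used here, composition is defined through $\mathfrak{A}^2$, and the Remark following Lemma \ref{as-compose} gives an explicit example where the naive composition of two asymptotic morphisms does \emph{not} represent their $E$-theoretic product. The paper handles this by checking the hypothesis of Lemma \ref{as-compose}: for every $r\in(0,1]$ the rescaled family $\gamma^{(r)}_t(g\hat\otimes T)(x,y)=\theta_t^N(x)\bigl(\beta_{f(x)\in W_N(x)}(g_{rt})\bigr)T(x,y)$ must itself be an asymptotic morphism. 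This is a substantive verification that you have not addressed.

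\textbf{The local identity is a homotopy, not an asymptotic equivalence.} Your second step asserts that $\theta^N_t(x)\bigl(\beta_{W_N(x),f(x)}(g_t)\bigr)$ equals $g(0)\,p_x$ plus a term vanishing as $t\to\infty$. HKT do not prove this; they prove the composition is \emph{homotopic} (via Mehler's formula) to $g\mapsto g(0)P$, and the homotopy is the content. The paper implements it in two moves. First, a rotation homotopy $\gamma(s)_t$ conjugates entrywise by $R(s)\,\mathrm{diag}(U_{x,t},1)\,R(s)^{-1}$ in $M_2$, moving the centre from $f(x)$ to $0$; this path is norm-continuous in $s$ even though the obvious path of translations $s\mapsto U_{x,st}$ is only strongly continuous, which is exactly why the $2\times2$ rotation is used rather than your proposed conjugation in $\ell^\infty(X,\mathcal{U}(\mathcal{H}_0))$. (Your path-of-unitaries argument has a real uniformity problem: the Gaussians at $f(x)$ and at $0$ can be almost orthogonal when $\|f(x)\|$ is large, so a canonical connecting path need not be uniformly norm-continuous in $x$.) Second, Mehler's formula identifies the $0$-centred composition with $g\mapsto g_{t^2}(B_t(x))$, and a further homotopy of $*$-homomorphisms takes this to $g\mapsto g(0)P$. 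Your ``$t\mapsto(1-s)t+s\infty$'' rescaling does not produce a homotopy of asymptotic morphisms and cannot substitute for either of these steps.
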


We give the proof at the end of this section, but first show how it implies Theorem \ref{main}.

\begin{wrapup}\label{wrapup}
The composition $E_*\circ\phi_* \circ\alpha_*\circ \beta_*: K_*(C^*_{u,\xx}(X))\to K_*(C^*_{u,\xx}(X))$ is the identity map.
\end{wrapup}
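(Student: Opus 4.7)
The plan is to assemble the three previously-established identifications (the composition formula of Theorem \ref{composition}, the grading comparison Lemma \ref{gradelem}, and Proposition \ref{kmap} on the Morita inverse) and reduce the desired equality to a single line of cancellations.

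First, I would pass Theorem \ref{composition} to $K$-theory: since $\alpha\circ\beta$ is homotopic to $\iota\circ(\eta\hat\otimes i_P)$ as asymptotic morphisms $\mathcal{S}\hat\otimes C^*_{u,\xx}(X)\leadsto UC^{*,g}_{\xx}(X)$, homotopy invariance of $E$-theoretic $K$-theory gives
\begin{displaymath}
\alpha_*\circ\beta_* \;=\; \iota_*\circ(\eta\hat\otimes i_P)_* \quad\text{on } K_*(\mathcal{S}\hat\otimes C^*_{u,\xx}(X)) \to K_*(UC^{*,g}_{\xx}(X)).
\end{displaymath}
Composing with $E_*\circ\phi_*$ on the left, the factor $\phi_*\circ\iota_*$ is the identity on $K_*(UC^*_{\xx}(X))$ by Lemma \ref{gradelem}, so the whole composition collapses to $E_*\circ(\eta\hat\otimes i_P)_*$.

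The second step is to identify $K_*(C^*_{u,\xx}(X))$ with $K_*(\mathcal{S}\hat\otimes C^*_{u,\xx}(X))$ in the graded $K$-theory conventions of Appendix \ref{appendix1}: the counit $\eta:\mathcal{S}\to\C$ induces precisely this natural isomorphism (and under it, the tensor product map $\eta\hat\otimes i_P$ corresponds to $i_P$ itself). Thus, under the identification used implicitly in the statement of the corollary, the composition $E_*\circ(\eta\hat\otimes i_P)_*$ is simply $E_*\circ i_{P*}$. By Proposition \ref{kmap}, $i_{P*}$ is the $K$-theoretic inverse to $E_*$ (in the sense of Exel), so $E_*\circ i_{P*}=\id$, which is what was claimed.

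The only point requiring real care is the identification of $(\eta\hat\otimes i_P)_*$ with $i_{P*}$; concretely this amounts to verifying that the projection $P$, which by construction projects onto $\mathrm{Span}\{e^{-\|v\|^2}\}$, matches the distinguished generator of $K_0(\mathcal{S})$ implicit in the graded $K$-theory normalisation (so that $\eta$ indeed realises the Bott-type isomorphism $K_*(\mathcal{S}\hat\otimes A)\cong K_*(A)$ in the expected way). The corresponding maximal-norm version of every step goes through verbatim: Theorem \ref{composition} and Lemma \ref{gradelem} are stated uniformly in the $\xx$-notation, Proposition \ref{kmap} explicitly covers both cases, and the homotopy-invariance argument is insensitive to separability. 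Hence the same argument proves the statement for $C^*_{u,max}(X)$, and the corollary follows for both versions simultaneously.
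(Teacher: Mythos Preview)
Your proof is correct and follows exactly the paper's approach: invoke Theorem \ref{composition} to replace $\alpha_*\circ\beta_*$ by $\iota_*\circ(\eta\hat\otimes i_P)_*$, cancel $\phi_*\circ\iota_*$ via Lemma \ref{gradelem}, and finish with Proposition \ref{kmap}. One minor caveat: your remark that the identification $(\eta\hat\otimes i_P)_*=i_{P*}$ ``amounts to verifying that the projection $P$ \dots\ matches the distinguished generator of $K_0(\mathcal{S})$'' conflates two unrelated things---$P$ lives in $\mathcal{K}(\mathcal{H}_0)$ and plays no r\^ole in $K_0(\mathcal{S})$; the identification is purely a matter of the $E$-theory conventions in Appendix \ref{appendix1} (a $*$-homomorphism $A\to B$ is promoted to an $E$-class by precomposing with $\eta$, so $\eta\hat\otimes i_P$ and $i_P$ yield the same element of $E(C^*_{u,\xx}(X),UC^*_{\xx}(X))$ by construction).
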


\begin{proof}
Using Theorem \ref{composition}, we may replace this composition with $E_*\circ\phi_*\circ\iota_*\circ(\eta\hat\otimes i_P)_*$, which is equal to $E_*\circ(\eta\hat\otimes i_P)_*$ by Lemma \ref{gradelem}, which is the identity by Proposition \ref{kmap}.
\end{proof}

\begin{proof}[Proof of Theorem \ref{main}]
Consider the commutative diagram 
\begin{displaymath}
\vtop{\xymatrix{ K_*(C^*_{u,max}(X)) \ar[d]^{\beta_*} \ar[r]^{\lambda_*}  & K_*(C^*_u(X)) \ar[d]^{\beta_*} \\
K_*(C^*_{u,max}(X,\mathcal{A})) \ar[d]^{\alpha_*} \ar[r]^{\lambda_*}_\cong & K_*(C^*_u(X,\mathcal{A})) \ar[d]^{\alpha_*} \\ K_*(UC^{*,g}_{max}(X)) \ar[d]^{\phi_*} \ar[r]^{\lambda_*} & K_*(UC^{*,g}(X)) \ar[d]^{\phi_*} \\ K_*(UC^{*}_{max}(X)) \ar[d]^{E_*} \ar[r]^{\lambda_*} & K_*(UC^{*}(X)) \ar[d]^{E_*} \\ K_*(C^*_{u,max}(X)) \ar[r]^{\lambda_*}  & K_*(C^*_u(X)) }}.
\end{displaymath}
The two vertical compositions are isomorphisms by Corollary \ref{wrapup}, while the second horizontal arrow is an isomorphism by Proposition \ref{mer}.  The top square thus implies that $\lambda_*:K_*(C^*_{u,max}(X))\to K_*(C^*_{u}(X))$ is injective, while the bottom rectangle (i.e.\ bottom three squares together) implies that it is surjective.
\end{proof}

Theorem \ref{composition} is essentially proved in \cite[Proposition 7.7]{Yu:200ve} for the `usual' (i.e.\ not uniform, or maximal) Roe algebras. We summarize the proof and add some remarks that pertain to the uniform and the uniform--maximal cases.

\begin{proof}[Proof of Theorem \ref{composition}]
The proof has three stages: first we construct a family of maps
$$
\{\gamma(s)_t:\mathcal{S}_0\hat\otimes_{alg}\C_u[X] \to M_2(U\C^g[X])\}_{s\in [0,1], t\in [1,\infty)},
$$
which we show defines a homotopy of asymptotic morphisms
$$\gamma(s):\mathcal{S}\hat\otimes C^*_{u,\xx}(X)\to \mathfrak{A}(M_2(UC^{*,g}_\xx(X)));$$
second, we use Lemma \ref{as-compose} to show that the asymptotic family $\gamma_t:=\gamma(1)_t$ represents the composition of $\alpha$ and $\beta$ in $E$-theory; thirdly, we construct a homotopy between the asymptotic morphism $\gamma':=\gamma(0)$ and that defined by the $*$-homomorphism $\iota\circ(\eta\hat\otimes i_P)$.

A general remark is perhaps in order.  The asymptotic morphisms (i.e.\ variations on $\alpha$, $\beta$, $\gamma$) that we are using are all defined `entrywise' for $X$-by-$X$ matrices of finite propagation.  Now, to show, for example, that two such asymptotic morphisms are asymptotically equivalent, either in the reduced or maximal cases, it suffices to prove the requisite estimates entrywise, \emph{as long as} they are uniform across entourages of the form $\{(x,y)\in X\times X~|~d(x,y)\leq R\}$\footnote{This statement uses bounded geometry, which implies that there is a uniform bound on the number of non-zero entries in the rows or columns of a finite propagation matrix} .  One can then appeal to Lemma \ref{fundlem} to prove (for example, again) asymptotic equivalence over the entire $C^*$-algebra (with respect to either the reduced or maximal completion).  Our arguments below essentially proceed entrywise; we make use of the remark above without further comment. \\

We now define $\gamma(s)_t$.  

Let then $U_{x,t}$ be the unitary operator on $\mathcal{H}$ induced by the translation $v\mapsto v-tf(x)$ on $V$. Let $R(s)=\left(\begin{smallmatrix}\cos(\pi s/2)& \sin(\pi s/2)\\ -\sin(\pi s/2) & \cos(\pi s/2)\end{smallmatrix}\right)$, $s\in [0,1]$ be the rotation matrices. Denote
\begin{displaymath}
U_{x,t}(s)=R(s)\begin{pmatrix}U_{x,t}&0\\0&1\end{pmatrix}R(s)^{-1}\in\B(\mathcal{H}\oplus\mathcal{H})
\end{displaymath}
and 
\begin{equation}\label{eq:gammadef0}
(\gamma_t(g\hat\otimes T))(x,y)=\theta_t^N(x)(\beta_{f(x)\in W_N(x)}(g_{t}))\cdot T(x,y)
\end{equation}
where $N$ is large enough, depending on the propagation of $T$. Let us note that large $N$ is needed in order to be able to `switch from $x$ to $y$' when proving that each $\gamma_t$ defines an asymptotic family (an outline of the argument for this is given below). 
We define our family of maps $\gamma(s)_t:\mathcal{S}_0\hat\otimes \C_u[X]\rightsquigarrow M_2(U\C^g[X])$ by letting
\begin{align*}
\gamma(s)_t(g\hat\otimes T)(x,y)&=U_{x,t}(s)\begin{pmatrix}(\gamma_t(g\hat\otimes T))(x,y)&0\\0&0\end{pmatrix}U_{x,t}(s)^{-1}\\
 &=U_{x,t}(s)\begin{pmatrix}\theta_t^N(x)(\beta_{f(x)\in W_N(x)}(g_t))&0\\0&0\end{pmatrix}U_{x,t}(s)^{-1}\cdot T(x,y).
\end{align*}

The basic point of the homotopy in the $s$ variable is that the Bott maps $\beta(x)$ we use include a point in $V$ as $f(x)$, and not as $0$; moreover, this is reflected in the $B_{N,t}$ operators, since the Clifford multiplication operators `$C$' incorporate this in their definition. The homotopy $\gamma(s)$ of asymptotic morphisms interpolates between the two inclusions: $\gamma(1)$ includes a point as $f(x)$, and is thus closely related to $\alpha\circ\beta$; $\gamma(0)$ includes all points as $0\in V$, and is thus more closely related to  $\iota\circ(\eta\hat\otimes i_P)$.

We next argue that for every $s\in[0,1]$, $\gamma(s)_t$ is an asymptotic family; this has essentially been done in the proof of Claim \ref{ashomo}.  Indeed, multiplying out the matrices in the formula above for $\gamma(s)_t$ shows that it suffices to prove that the formula in line (\ref{eq:gammadef0}) defines an asymptotic morphism.  Consider now a subset of $\mathcal{S}_0\hat\otimes_{alg}\C_u[X]$ satisfying the following.  `All elements are a sum of finitely many elementary tensors $f\hat{\otimes}T$ from $\mathcal{S}_0$ and $\C_u[X]$ such that:
\begin{itemize}
\item there exists $R>0$ so that $f$ is supported in $[-R,R]$;
\item there exists $c>0$ so that $\|df/dx\|_{C_0(\R)}\leq c$;
\item there exists $S>0$ so that $T$ is of propagation at most $S$.' 
\end{itemize}
One checks that on such a subset the family $\{\beta_t\}$ satisfies the estimates needed to show that it is an asymptotic family.  Moreover, each of the maps $\beta_t$ take such a subset into a subset of $\C[X,\mathcal{A}]$ where the conditions from Definition \ref{tra} are satisfied \emph{uniformly}, and on such a subset of $\C[X,\mathcal{A}]$, the family $\alpha_t$ satisfies the estimates needed to show that \emph{it} is an asymptotic family uniformly.  This implies that $\gamma_t$, whence also $\gamma(s)_t$, defines an asymptotic family\footnote{This sort of argument can be used to compute the composition of $\alpha$ and $\beta$ along the lines of \cite{Connes:1990kx} and without using Lemma \ref{as-compose}; we prefer our set-up, however, as it seems more general and elegant.}.  

A similar argument shows that the range of each $\gamma(s)_t$ really is in $U\C^g[X]$.  Indeed, it is clear that the rank of the finite-rank approximants to each entry of the two-by-two matrix defining $\gamma(s)_t(a)$ is just the same as for the operator $\gamma_t(a)$ itself, and we can study this using the discussion from Section \ref{bdsec}.

A remark about norms is in order. We can extend $\gamma(s)$ to the $red\to red$ situation by an argument similar to the one used for $\beta$ in the proof of Claim \ref{completions}: we estimate $\|\gamma(s)(g\hat\otimes T)\|\leq \|g\|\|T\|$ analogously to the method used for $\beta_t$ in \cite[Proof of Lemma 7.6]{Yu:200ve}, then extend $\gamma(s)$ to the algebraic tensor product of $\mathcal{S}$ with $C^*_u(X)$, and finally extend it to the maximal (and hence reduced) tensor product. In the $max\to max$ case, we argue exactly as for $\beta$ in Claim \ref{completions}.

We have argued that each $\gamma(s)$ is an asymptotic morphism; we need to show that it defines a homotopy of asymptotic morphisms.  It is clear from the formula, however, that for a fixed $a\in \mathcal{S}_0\hat\otimes_{alg} \C_u[X]$ and $t\in[1,\infty)$, $[0,1]\ni s\mapsto \gamma(s)_t(a)$ is a norm-continuous path in $U\C^g[X]$, however; it follows from this that $\{\gamma(s)\}_{s\in [0,1]}$ is a genuine homotopy of asymptotic morphisms. Note in particular, then, that $\gamma(1)$, $\gamma(0)$ represent the same element in $E$-theory.\\

We now show that $\gamma(1)$ represents the composition of $\alpha$ and $\beta$.

To show that we can use the `naive' composition $\alpha_t\circ\beta_t$ as a representative of the composition of the asymptotic morphisms $\alpha$ and $\beta$, we use Lemma \ref{as-compose}, proved in Appendix \ref{appendix1}. For $r\in(0,1]$, let
\begin{equation}\label{eq:gammadef}
(\gamma^{(r)}_t(g\hat\otimes T))(x,y)=\theta_t^N(x)(\beta_{f(x)\in W_N(x)}(g_{rt}))\cdot T(x,y).
\end{equation}
As before, the choice of $N$ does not asymptotically matter. An argument completely analogous to the one given above for the endpoint $\gamma(1)$ of the homotopy $\gamma(s)$ (replacing $t$ with $rt$ as necessary) shows that $\gamma^{(r)}$, a priori $\mathcal{S}_0\hat\otimes \C_u[X]\rightsquigarrow U\C^g[X]$ extends to an $r$-parametrised family of asymptotic morphisms in both $red\to red$ and $max\to max$ cases. It is clear from the definition of $\gamma^{(r)}$ that it is in fact asymptotic to the composition $\alpha_t\circ\beta_{rt}$; thus Lemma \ref{as-compose} applies, and we can represent the composition of asymptotic morphisms $\alpha\circ\beta$ by the asymptotic morphism $\gamma^{(1)}$.\\

Finally, we prove that $\gamma(0)$ is homotopic to $\iota\circ(\eta\hat\otimes i_P)$.

Note that $\gamma(0)_t$ is asymptotic to the morphism $\left(\begin{smallmatrix}\gamma'_t&0\\0&0\end{smallmatrix}\right)$, where $\gamma'$ is defined by
\begin{equation*}
(\gamma'_t(g\hat\otimes T))(x,y)=\theta_t^N(x)\circ \beta_{0\in W_N(x)}(g_t)\cdot T(x,y),
\end{equation*}
where $N$ is sufficiently large. Using the proof \cite[(unnumbered) Proposition in Appendix B; `Mehler's formula']{Higson:1999be} and Section 5 in the same paper, the family $g\mapsto \theta_t^N(x)(\beta_{0\in W_N(x)})(g_t)$ is asymptotic to the family of $*$-homomorphisms $g\mapsto g_{t^2}(B_{t}(x))$, where
$$
B_t(x):=t_0(D_0+C_0')+t_1(D_0+C_1)+\dots+t_n(D_n+C_n)+\dots,\quad\text{ where }t_j=1+j/t,
$$
and where $C_0'$ is the Clifford multiplication operator on $V_0(x)=W_1(x)$, now induced by the function $v\mapsto v$, instead of $v\mapsto v-f(x)$. Furthermore, this family is homotopic to the $*$-homomorphism $g\mapsto g(0)P$, where $P$ is the rank one projection onto the kernel of $B_t(x)$; this no longer depends on $x$, and is indeed equal to $\Span\{e^{-\|v\|^2}\}$ (see for example \cite[page 30]{Higson:1999be}). Finally, one observes that all this happens uniformly in $x$, whence we obtain a homotopy of asymptotic morphisms from $\gamma'$ to the $*$-homomorphism $\mathcal{S}\hat\otimes C^*_{u,\xx}(X)\to UC^{*,g}_{\xx}(X)$, given on simple tensors by the formula $g\hat\otimes T\mapsto \eta(g)i_P(T)$.
\end{proof}

\begin{appendix}

\section{Appendix: $K-$ and $E-$ theory conventions, and a lemma about composing asymptotic morphisms}
\label{appendix1}

As mentioned at the start of Section \ref{mainproof}, we use graded $K$-theory, and graded $E$-theory for some of our main computations.  As there are several possible descriptions of these theories, not all of which agree (or even make sense) for non-separable $C^*$-algebras, it seemed worthwhile to summarise our conventions here.  Our main reference for graded $K$-theory and $E$-theory is \cite{Higson:2004la}, which in turn refers to the ungraded case covered in \cite{Guentner:2000fj} for many proofs.  Note, however, that our definition of graded $E$-theory does \emph{not} match that given in \cite[Definition 2.1]{Higson:2004la}\footnote{In our notation, \cite[Definition 2.1]{Higson:2004la} defines $E(A,B):=\llbracket \mathcal{S}\hat{\otimes}A\hat{\otimes}\mathcal{K}(\mathcal{H}) , B\hat{\otimes}\mathcal{K}(\mathcal{H})\rrbracket_1$}: indeed, \cite[Theorem 2.16]{Guentner:2000fj} implies that \cite[Definition 2.1]{Higson:2004la} and the definition in line (\ref{egroups}) below are equivalent in the separable case, but this is not at all clear in the non-separable case.  It seems that the definition used below has better properties in the non-separable case.  

To avoid unnecessary multiplication of adjectives, throughout this appendix `$C^*$-algebra' means `graded $C^*$-algebra', and `$*$-homomorphism' means `graded $*$-homomorphism'.\\

The authors of \cite{Higson:2004la} define functors $\mathfrak{T}$, $\mathfrak{T}_0$ and $\mathfrak{A}$ from the category of $C^*$-algebras and $*$-ho\-mo\-mor\-phisms into itself. On the objects, $\mathfrak{T}$ takes a $C^*$-algebra $B$ to the $C^*$-algebra of continuous, bounded functions from $[1,\infty)$ into $B$, while $\mathfrak{T}_0B$ is the ideal of those functions in $\mathfrak{T}B$ which vanish at infinity.  Finally, $\mathfrak{A}B=\mathfrak{T}B/\mathfrak{T}_0B$. 

An \emph{asymptotic morphism} $\varphi$ from a $C^*$-algebra $A$ into a $C^*$-algebra $B$ is a $*$-homomorphism from $A$ into $\mathfrak{A}B$. This is essentially equivalent to giving a family of maps $\varphi_t:A\to B$, $t\in [1,\infty)$ that satisfy certain  conditions: see \cite[Definition 1.18]{Higson:2004la}.   We say that a family of maps $\{\varphi_t:A\to B\}_{t\in[1,\infty)}$ is an \emph{asymptotic family} if it satisfies the conditions from \cite[Definition 1.18]{Higson:2004la}.  Throughout this piece we use the notation `$\varphi_t:A\leadsto B$' to mean `$\{\varphi_t\}_{t\in[1,\infty)}$ is a $[1,\infty)$-parametrised family of maps from $A$ to $B$'. 

To properly define composition of asymptotic morphisms without assumptions on separability of the $C^*$-algebras involved, one needs to consider the functors $\mathfrak{A}^n$, the composition of the functor $\mathfrak{A}$ with itself $n$ times, and an appropriate notion of homotopy: two $*$-homomorphisms $\varphi_0,\varphi_1:A\to\mathfrak{A}^nB$ are \emph{$n$-homotopic} if there is a closed interval $I$ and a $*$-homomorphism $\varphi:A\to \mathfrak{A}^nC(I,B)$ from which $\varphi_0$ and $\varphi_1$ can be recovered upon composing with the evaluations at the endpoints of $I$. One then defines $\llbracket A,B\rrbracket_n$ to be the set of $n$-homotopy classes of $*$-homomorphisms from $A$ to $\mathfrak{A}^nB$. 

We denote by $\alpha_B:B\to\mathfrak{A}B$ the $*$-homomorphism which maps $b\in B$ to the class of the constant function $[1,\infty)\ni t\mapsto b\in B$. Composition with $\alpha_{\mathfrak{A}^nB}$ induces a map $\llbracket A,B\rrbracket_n\to \llbracket A,B\rrbracket_{n+1}$. Under these maps, $(\llbracket A,B\rrbracket_n)_{n\in \N}$ forms a directed system; its direct limit is denoted by $\llbracket A,B\rrbracket_\infty$. These morphism sets can be made into a category by defining the composition of two $*$-homomorphisms $\varphi:A\to \mathfrak{A}^jB$ and $\psi:B\to \mathfrak{A}^kC$ to be the element of $\llbracket A,B\rrbracket_\infty$ represented by
$$
A\stackrel{\scriptscriptstyle\varphi}{\rightarrow}\mathfrak{A}^jB 
\stackrel{\scriptscriptstyle{\mathfrak{A}^j(\psi)}}{\longrightarrow}\mathfrak{A}^{j+k}C.
$$

Finally, if $A$, $B$ are $C^*$-algebras and $\mathcal{H}$ a fixed graded separable infinite-dimensional Hilbert space, the $E$-theory group $E(A,B)$ is defined to be
\begin{equation}\label{egroups}
E(A,B):=\llbracket \mathcal{S}\hat{\otimes}A\hat{\otimes}\mathcal{K}(\mathcal{H}) , B\hat{\otimes}\mathcal{K}(\mathcal{H})\rrbracket_\infty.
\end{equation}
Note that asymptotic morphisms $\mathcal{S}\hat{\otimes} A \leadsto B$, or simply $A\leadsto B$ induce elements of $E(A,B)$ by tensoring with the identity on $\mathcal{K}(\mathcal{H})$ and using the counit $\eta:\mathcal{S}\to \C$ as appropriate.
Composition in $E$-theory is then defined using the coproduct $\Delta$ for $\mathcal{S}$: the composition of $\varphi\in E(A,B)$ and $\psi\in E(B,C)$ is defined to be 
$$
 \mathcal{S}\hat{\otimes}A\hat{\otimes}\mathcal{K}(\mathcal{H}) \stackrel{\scriptstyle\Delta}{\rightarrow} \mathcal{S}\hat{\otimes}\mathcal{S}\hat{\otimes}A\hat{\otimes}\mathcal{K}(\mathcal{H}) \stackrel{\scriptstyle\varphi}{\rightarrow} \mathfrak{A}^j( \mathcal{S}\hat{\otimes}B\hat{\otimes}\mathcal{K}(\mathcal{H}) )  \stackrel{\scriptstyle\mathfrak{A}^j\psi}{\rightarrow} \mathfrak{A}^{k+j}(C\hat{\otimes}\mathcal{K}(\mathcal{H})).
$$

Having set up all of these preliminaries, the graded $K$-theory of a graded $C^*$-algebra $A$ can be defined to be $K_0(A):=E(\C,A)$, with higher $K$-groups defined via suspension $K_{n}(A):=E(\C,C_0(\R^n)\hat{\otimes} A)$.  Bott periodicity holds, so there are essentially only two higher $K$-groups; we write $K_*(A)$ for the graded abelian group $K_0(A)\oplus K_1(A)$.  It is then immediate from the above definitions that an element $\varphi\in E(\C,A)$ induces a map $\varphi_*:K_*(A)\to K_*(B)$.  Moreover, \cite[Proposition 1.10 and Lemma 2.2]{Higson:2004la} and \cite[Proposition 2.9]{Guentner:2000fj} imply that if $A$ is trivially graded, then this definition of $K$-theory agrees with any of the usual ones (this fact does not require separability).

\begin{asnot}\label{asnot}
For the reader's convenience, we summarise our notational conventions.
\begin{itemize}
\item `$\alpha_t:A\leadsto B$' denotes a family of maps between $*$-algebras $A$ and $B$ parametrised by $[1,\infty)$; we say that $\{\alpha_t\}_{t\in [1,\infty)}$ is an \emph{asymptotic family} if the $\alpha_t$ satisfies the conditions from \cite[Definition 1.18]{Higson:2004la}. 
\item An \emph{asymptotic morphism} is a $*$-homomorphism $\alpha:A\to \mathfrak{A}B$; in the main body of the paper, such are usually induced by asymptotic families as above.  We write $\alpha$ for the asymptotic morphism induced by an asymptotic family $\alpha_t:A\leadsto B$; we do not distinguish between an asymptotic morphism $\alpha:A\to\mathfrak{A}B$ (or $\alpha:\mathcal{S}\hat\otimes A \to \mathfrak{A}B$) and the element of $E$-theory $\alpha\in E(A,B)$ that it defines.  
\item We write $\alpha_*:K_*(A)\to K_*(B)$ for the homomorphism (of graded abelian groups) induced by $\alpha\in E(A,B)$ (note, of course, that the original source of such an $\alpha$ is usually an asymptotic family $\alpha_t:A\leadsto B$, or $\alpha_t:\mathcal{S}\hat\otimes A \leadsto B$).
\end{itemize}
\end{asnot}

\begin{as-compose}\label{as-compose}
Assume that we are given two asymptotic morphisms $\varphi_t:A\leadsto B$ and $\psi_t:B\leadsto C$. Moreover, assume that for every $s\in (0,1]$, the composition $\psi_t\circ\varphi_{st}:A\leadsto C$ \emph{is} an asymptotic morphism. Then the composition $\psi\circ \varphi$ in $E(A,C)$ is represented by the asymptotic family $\psi_t\circ \varphi_t:A\leadsto C$.
\end{as-compose}

\begin{acrem}
Simple examples show that even if $\varphi_t:A\leadsto B$, $\psi_t:B\leadsto C$ and the `naive composition' $\psi_t\circ\varphi_t:A\leadsto C$ all happen to be asymptotic morphisms, then one need not have that $\psi_*\circ\phi_*:K_*(A)\to K_*(C)$ is the same as the map on $K$-theory induced by $\psi_t\circ\varphi_t$ (and so in particular, the composition of $\psi$ and $\varphi$ in $E$-theory is not equal to the $E$ theory class induced by $\{\psi_t\circ\varphi_t\}$).  Indeed, let $A=B=C=C_0(\R)$.  Let $\{u_t\}_{t\in [1,\infty)}$ be a continuous approximate unit for $C_0(\R)$ such that each $u_t$ is supported in $[-t,t]$, and let $h_t:(2t, 3t)\to \R$ be a continuous family of orientation preserving homeomorphisms.  Let $\psi_t:C_0(\R) \to C_0(\R)$ be defined by $f\mapsto u_t\cdot f$.  Let $\iota_t:C_0(2t, 3t)\to C_0(\R)$ be the usual $*$-homomorphism induced by an open inclusion and let $\varphi_t:C_0(\R)\to C_0(\R)$ be given by $f\mapsto \iota_t(f\circ h_t)$.  Then both $\psi_t$, $\varphi_t$ are asymptotic morphisms that induce the identity on $K$-theory.  The naive composition $\psi_t\circ\varphi_t$ is the zero map, so also an asymptotic morphism, but certainly does not induce the identity on $K$-theory.  
\end{acrem}

\begin{proof}
The proof is inspired by the proof of \cite[Lemma 2.17]{Guentner:2000fj}. We have the following diagram:
\begin{equation}\label{eq:2-htpy}
\vcenter{\xymatrix{
A\ar[r]^{\varphi}\ar[d]_{\psi_t\circ\varphi_t} & \mathfrak{A}B\ar[d]^{\mathfrak{A}(\psi)} \\
\mathfrak{A}C\ar[r]^{\alpha_{\mathfrak{A}C}} & \mathfrak{A}^2C,
}}
\end{equation}
where the composition across the upper-right corner is the `correct' definition of the composition of the asymptotic morphisms; and the other composition is what we would like the product to be represented by. All arrows are $*$-homomorphisms (by assumption). We need to prove that the diagram commutes up to 2-homotopy.

Recall from \cite[Remark 2.11]{Guentner:2000fj} that we can think of elements of $\mathfrak{A}^2IC$ as represented by functions in $\mathfrak{T}^2IC$, i.e.\ continuous bounded functions on three variables, $t_1,t_2\in [1,\infty)$ and $s\in[0,1]$ into $C$ (satisfying certain extra continuity conditions). The $C^*$-algebra $\mathfrak{A}^2IC$ is actually the quotient of $\mathfrak{T}^2IC$ corresponding to the $C^*$-seminorm
$$
\|F\|_{\mathfrak{A}^2}=\limsup_{t_1\to\infty}\limsup_{t_2\to\infty}\sup_s\|F(t_1,t_2,s)\|.
$$
Similarly for $\mathfrak{A}^2C$, we just don't use the variable $s$. Using this description, the composition $\mathfrak{A}(\psi)\circ\varphi$ from \eqref{eq:2-htpy} can be expressed as the map assigning to every $a\in A$ the function $\psi_{t_2}(\varphi_{t_1}(a))$, and the other composition from \eqref{eq:2-htpy} as the function $\psi_{t_2}(\varphi_{t_2}(a))$.

We now construct the required 2-homotopy $H:A\to \mathfrak{T}^2IC$:
$$
a\mapsto H(t_1,t_2,s)=\begin{cases}
\psi_{t_2}(\varphi_{t_1}(a)) & \text{ if }t_1>st_2\\
\psi_{t_2}(\varphi_{st_2}(a)) & \text{ if }t_1\leq st_2.
\end{cases}
$$
This finishes the proof.
\end{proof}

\section{Appendix: List of notation}
\label{appendix2}

We include the following list of notation for the reader's convenience, partly as so much of our notation is imported from \cite{Yu:200ve}.  Definitions for most of the objects below are included in the main body of the text.
\def\arraystretch{1.2}
\def\tabcolsep{.5mm}
\def\cg#1{\cite[#1]{Yu:200ve}}
\begin{longtable}{lcp{.8\textwidth}r}
$H$ &$=$&infinite-dimensional separable Hilbert space. & \\
$f$ &$:$& $X\to H$, the fixed coarse embedding of $X$ into $H$. & \\
$W_n(x)$ &$=$& $\Span\{f(y)\in H\mid d(x,y)\leq n^2\}$. & \cg{p.212} \\
$V_n(x)$ &$=$& $W_{n+1}(x)\ominus W_n(x)$, $V_0(x)=W_1(x)$. & \cg{p.228} \\
$W(x)$ &$=$& $\bigcup_{n\in\N}W_n(x)~~ (=V)$. & \cg{p.211} \\
$V_a,V_b$ &$=$& finite-dimensional affine subspaces of $V$. & \cg{p.211} \\
$V_a^0$ &$=$& the linear subspace of differences of elements of $V_a$. & \cg{p.211}   \\
$\Cliff_\C(V_a^0)$ &$=$& the complex Clifford algebra of $V_a^0$. & \\
$\mathcal{C}(V_a)$ &$=$& $C_0(V_a,\Cliff_\C(V_a^0))$. & \cg{p.211} \\
$\mathcal{S}$ &$=$& $C_0(\R)$, graded by even and odd functions. & \\
$\eta$ &$:$& $\mathcal{S}\to \C$, $g\mapsto g(0)$, the `counit' $*$-homomorphism. & \\
$\mathcal{A}(V_a)$ &$=$& $\mathcal{S}\hat\otimes\mathcal{C}(V_a)$. & \cg{p.211} \\
\multicolumn{3}{l}{Assume now that $V_a\subseteq V_b$.} & \\
$V_{ba}^0$ &$=$& the orthogonal complement of $V_a^0$ in $V_b^0$. & \cg{p.211} \\
$\tilde h$ &$=$& the extension of $h\in\mathcal{C}(V_a)$ to a multiplier of $\mathcal{C}(V_b)$ by the formula $\tilde h(v_b)=h(v_a)$, where $v_b=v_{ba}+v_a\in V_{ba}^0+V_a=V_b$. & \cg{p.211} \\
$X$ &$:$& $\R\to\R$, $t\mapsto t$, thought of as an unbounded multiplier of $\mathcal{S}$. & \cg{p.211} \\
$C_{ba}$ &$:$& $V_{b}\to \Cliff_\C(V_{b}^0)$, $v_b\mapsto v_{ba}\in V_{ba}^0\subset \Cliff_\C(V_b^0)$, thought of as an unbounded multiplier of $\mathcal{C}(V_b)$. & \cg{p.211} \\
$\beta_{ba}$ &$:$& $\mathcal{A}(V_a)\to\mathcal{A}(V_b)$, $g\hat\otimes h\mapsto g(X\hat\otimes 1+1\hat\otimes C_{ba})(1\hat\otimes\tilde h)$, a $*$-homomorphism. & \cg{p.211}\\
$\mathcal{A}(V)$ &$=$& $\lim_{V_a\subset V}\mathcal{A}(V_a)$, using $\beta_{ba}$s as connecting maps. & \cg{p.212} \\
$\beta_n(x)$ &$:$& $\mathcal{A}(W_n(x))\to\mathcal{A}(V)$. & \cg{p.213} \\
$\mathcal{H}_a$ &$=$& $L^2$-functions from $V_a$ into $\Cliff_\C(V_a^0)$, carries a representation of $\mathcal{A}(V_a)$.  & \cg{p.227} \\
$T_{ba}$ &$:$& $\mathcal{H}_a\to\mathcal{H}_b$, a unitary, via $\mathcal{H}_a\ni \xi\mapsto \xi_0\hat\otimes\xi\in\mathcal{H}_{ba}\hat\otimes\mathcal{H}_a$, where $\xi_0(v_{ba})=\pi^{-\dim(V_{ba})/4}\exp(-\|v_{ba}\|^2/2)$. & \\
$\mathcal{H}$ &$=$& $\lim_{V_a\subset V}\mathcal{H}_a$, using $T_{ba}$s; carries a representation of $\mathcal{A}(V)$. & \cg{p.227} \\
$s_a$ &$=$& the Schwartz subspace of $\mathcal{H}_a$. & \cg{p.227}\\
$s$ &$=$& $\lim_{V_a\subset V}s_a$ (algebraic limit), the Schwartz subspace of $\mathcal{H}$. & \cg{p.227}\\
$D_a$ &$:$& $s\to s$ is the Dirac operator `on $V_a$', $D_a\xi=\sum(-1)^{\deg\xi}\frac{\partial\xi}{\partial x_i}v_i$. & \cg{p.227} \\
$C_a$ &$:$& $s\to s$ is the Clifford multiplication operator `on $V_a$', $(C_a\xi)(v_b)=v_a\xi(v_b)$, where $V_a\subset V_b$, $\xi\in s_b$, $v_b\in V_b$, $v_b=v_{ba}+v_a$. & \cg{p.227} \\
$B_{n,t}(x)$ &$=$& $t_0D_0+\dots+t_{n-1}D_{n-1}+t_n(D_n+C_n)+t_{n+1}(D_{n+1}+C_{n+1})+\dots$, where $t_j=1+j/t$, $D_n$ and $C_n$ are the Dirac and Clifford operators on $V_n(x)$.  & \ \cg{p.228} \\
$h_t(x)$ &$=$& $h(t^{-1}x)$ for a function $h$. & \cg{p.229} \\
$\pi(h)\xi$ &$=$& $\tilde h\xi$ is the multiplication action: $\mathcal{C}(W_n(x))\to\B(\mathcal{H}_b)\hookrightarrow \B(\mathcal{H})$, where $\xi\in\mathcal{H}_b$, $V_a=W_n(x)\subset V_b$. & \cg{p.229} \\
$\theta_t^n(x)$ &$:$& $\mathcal{A}(W_n(x))\rightsquigarrow\K(\mathcal{H})$, $\theta_t^n(x)(g\hat\otimes h)=g_t(B_{n,t}(x)|_{W_n(x)})\pi(h_t)$, $g\in\mathcal{S}$, the finite-dimensional Dirac morphism.& \cg{p.229}\\
$\alpha_t$ &$:$& $C^*_{u,\xx}(X,\mathcal{A})\rightsquigarrow UC^*_{\xx}(X)$, the Dirac morphism (more in Section \ref{mainproof} above). & \\
$\beta_{f(x)\in W_N(x)}$ &$:$& $\mathcal{A}(\{f(x)\})=\mathcal{S}\to\mathcal{A}(W_N(x))$, the $*$-homomorphism (`Bott morphism') associated to the inclusion of the zero-dimensional affine space $\{f(x)\}$ into $W_N(x)$. & \\
$\beta(x)$ &$:$& $\mathcal{S}\to \mathcal{A}(V)$, the Bott morphism for $\{f(x)\}\subset V$. & \cg{p.235}\\
$\beta_t$ &$:$& $\mathcal{S}\hat\otimes C^*_{u,\xx}(X)\rightsquigarrow C^*_{u,\xx}(X,\mathcal{A})$, the Bott morphism (more in Section \ref{mainproof} above). & \\
$\iota$ &$:$& $UC^*_\xx(X)\to UC^{*,g}_\xx(X)$, `the' inclusion of the ungraded uniform algebra in the graded uniform algebra. &  \\
$\phi$ &$:$& $UC^{*,g}_\xx(X)\to UC^*_\xx(X)\hat\otimes\Cliff_\C(\R^2)$, `the' isomorphism of the graded uniform algebra with the `stabilised' ungraded version. &   

\end{longtable}

\end{appendix}


\vspace{3em}

\noindent
\textsc{Mathematisches Institut, Universit\"at M\"unster, Einsteinstr.\ 62, 48149 M\"unster, Germany}\\
E-mail: \texttt{jan.spakula@uni-muenster.de}\\

\noindent
\textsc{1326 Stevenson Center, Vanderbilt University, Nashville, TN 37240, USA}\\
E-mail: \texttt{rufus.willett@vanderbilt.edu}

\end{document}